\documentclass{amsart}
\usepackage{amssymb, amsfonts}
\numberwithin{equation}{section}

\title[Nilpotent Hessenberg varieties with $h(i)\le i+1$]{Type A Nilpotent Hessenberg varieties with Hessenberg function $h(i)\le i+1$}\usepackage{amssymb,amsmath,amsthm,ytableau}
\usepackage{musicography}
\usepackage{amsfonts}
\usepackage{ragged2e}
\usepackage{subfiles}
\usepackage{tikz}
\usetikzlibrary{positioning,arrows.meta,shapes.multipart}
\usepackage{multirow}
\usepackage{longtable}
\usepackage{amsmath}

\usepackage{graphicx}
\graphicspath{ {./images/} }
\usepackage{biblatex}
\usepackage{hyperref}       
\usepackage{url}            
\def \AA{\mathcal{D}}
\def \RR{\mathcal{R}}
\def \P{\mathcal{P}}
\def \FL{\operatorname{Fl}}
\def \K{\mathbb{K}}

\def \CC{\mathbb{C}}
\def \SST{\operatorname{SSYT}}
\def \SPAN{\operatorname{span}}
\def \Pet{\operatorname{Pet}}
\def \PPet{\operatorname{Pet}}
\def \Hess{\operatorname{Hess}}
\def \FFq{\mathbb{F}_q}
\def \Poin{\operatorname{Poin}}
\def \lc {\operatorname{LC}\ }

\def \im {\operatorname{im}}

\def \height {\operatorname{ht}}

\def \KK {\mathcal{K}}
\def \ZZ{\mathcal{Z}}

\def\fN {f}
\def \Lambdalambda{\Gamma(\lambda)}
\def \LAM{\Gamma}
\def \CST{\operatorname{crs}}
\def \MIN{\min}

\newcommand{\BB}{\mathcal{B}}
\usepackage{mathdots}
\usepackage{nicematrix}
\newtheorem{thm}{Theorem}[section]
\newtheorem{define}[thm]{Definition}
\newtheorem{lem}[thm]{Lemma}

\newtheorem{remark}[thm]{Remark}
\newtheorem{pr}[thm]{Proposition}
\newtheorem{cor}[thm]{Corollary}
\newtheorem{ex}[thm]{Example}

\author{Zijing Zhuang}
\address{Department of Mathematics, Washington University in St. Louis, USA.}
\email{z.zijing@wustl.edu}
\begin{document}

\begin{abstract}
We study the type A nilpotent Hessenberg varieties associated with Hessenberg functions that satisfy $h(i)\le i+1$. We call these the generalized parabolic Peterson varieties. We show that such varieties can be decomposed into the union of specific generalized parabolic Peterson varieties $\operatorname{Pet}_{\lambda,\alpha}$, such that $\lambda$ is an integer partition, $\alpha$ is an integer composition, and $\alpha$ is dominated by $\lambda$. We prove that when $\alpha$ is dominated by $\lambda$, the cardinality of the maximal dimensional components of $\operatorname{Pet}_{\lambda,\alpha}$ equals the Kostka number $\mathcal{K}_{\lambda\alpha}$, and its dimension is determined only by $\lambda$ and the length of $\alpha$. We provide a recursive formula for the Poincaré polynomial of $\operatorname{Pet}_{\lambda,\alpha}$. 
These results partially answer open questions about the geometry of Hessenberg varieties but raise further questions about the representation-theoretic reasons for these facts.
\end{abstract}

\maketitle
\tableofcontents
\section{Introduction}\label{SSIntro}
Let $n$ be a positive integer. A \textbf{complete flag} in $\mathbb{C}^n$ is a nested sequence of subspaces $F_{\bullet}=\left(F_0\subset F_1 \subset \cdots \subset F_n\right)$ such that $\operatorname{dim}\left(F_i\right)=i$ for all $0\le i\le n$. The \textbf{flag variety} $\operatorname{Fl}(n)$ is the set of all complete flags in $\mathbb{C}^n$.

Given a complex $n\times n$ matrix $X$ and a Hessenberg function $h: [n]\rightarrow [n]$, which is a weakly increasing function such that $h(i)\ge i$ holds for all $i\in [n]$, the \textbf{Hessenberg variety} corresponding to $X$ and $h$ is the subvariety of $\operatorname{Fl}(n)$ defined as
 $$\Hess(X, h):=\{F_{\bullet} \in \operatorname{Fl}(n): X  F_i \subset F_{h(i)} \text{ for all } i\in [n]\}.$$
 DeMari and Shayman first introduced the Hessenberg variety in 1988 \cite{de1988generalized}. 
 DeMari, Procesi, and Shayman generalized their definition to all Lie types in 1992 \cite{de1992hessenberg}. 
For a recent survey, see \cite{abe2017survey}. The dimension and number of components of $\Hess(X, h)$ are known only in certain special cases. Below, we give three examples of Hessenberg varieties: the Springer fiber, the Peterson variety, and the parabolic Hessenberg variety, all of which are relevant to our studies. Let $\lambda$ be an integer partition of $n$,  $\alpha$ be an integer composition of $n$, $h_\alpha$ be the Hessenberg function such that $h_\alpha(i)=i$ if $i$ is a partial sum $\alpha_1+\alpha_2+\ldots+\alpha_j$ of $\alpha$, and $h_\alpha(i)=i+1$ otherwise. Let $N_\lambda$ be a $n\times n$ nilpotent matrix of Jordan type $\lambda$. Also, define $h_\Delta:=h_{(n)}$, which is the function that maps each $i$ to $i+1$ for all $i\in[n-1]$ and maps $n$ to $n$.
\begin{itemize}
    \item [(1)] The \textbf{Springer fiber} $\BB_\lambda$ is the Hessenberg variety corresponding to $N_\lambda$ and the Hessenberg function $h_{\operatorname{id}}:i\mapsto i$ for all $i\in [n]$. Spaltenstein showed that $\BB_\lambda$ is pure dimensional with dimension $\sum_{i}(i-1)\lambda_i$, and its components are in bijection with the standard Young tableaux of shape $\lambda$, whose cardinality also equals the dimension of the irreducible representation of the symmetric group $S_n$ corresponding to $\lambda$  \cite{spaltenstein1976fixed}. The last result is not coincidental; there is a
 $S_n$ action on the cohomology of the Springer fiber \cite{springer1976trigonometric}.

 \item[(2)] The \textbf{Peterson variety} is the Hessenberg variety of a regular nilpotent matrix $N:=N_{(n)}$, and the Hessenberg function $h_{\Delta}$. Dale Peterson introduced this variety in a series of lectures on the quantum cohomology of flag varieties in the 1990s. This variety was then studied by Kostant \cite{kostant1996flag} and later by Insko and Yong \cite{insko2012patch}. Peterson and Konstant's work shows that the Peterson variety is irreducible of dimension $n-1$.

 \item [(3)] The \textbf{parabolic Hessenberg variety} corresponding to a complex $n\times n$ matrix $X$ and an integer composition $\alpha$ is the Hessenberg variety containing all complete flags $F_\bullet$ that satisfy $XF_i\subset F_{i}$ if $i$ is a partial sum $\alpha_1+\alpha_2+\ldots+\alpha_j$ of $\alpha$. Precup and Tymoczko studied the parabolic Hessenberg variety and used it to recover, in an elementary way, a result of Steinberg and Shimomura: the Kostka numbers count the maximal dimensional components of Steinberg varieties \cite{precup2021hessenberg}.
\end{itemize}
 In this paper, we relax the condition in the definition of the Peterson variety that $N$ must be regular and define the \textbf{generalized Peterson variety} as 
$$
\Pet_\lambda :=\Hess(N_\lambda,h_\Delta)=\left\{F_{\bullet} \in \operatorname{Fl}(n): N_\lambda  F_i \subset F_{i+1} \text{ for all } i\in [n-1]\right\}.
$$
When $\lambda=(n)$, $\Pet_{(n)}$ is the Peterson variety. We prove the following result regarding the dimension and components of the generalized Peterson variety $\Pet_\lambda$. \begin{pr}\label{prop:generalized_Pet_dim}
     The generalized Peterson variety $\Pet_\lambda$ has dimension
     $$
     \dim\left(\Pet_{\lambda} \right) =\sum _{i} i\lambda_i - \ell (\lambda),
     $$
     and its components of this maximum dimension are in bijection with the semistandard Young tableaux of the shape $\lambda$ and content $\{1,2,\ldots,\ell (\lambda)\}$.
 \end{pr}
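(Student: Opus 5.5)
The plan is to stratify $\Pet_\lambda$ by the set of indices at which the flag happens to be $N_\lambda$-stable, and then to count dimensions stratum by stratum, in the spirit of Spaltenstein's argument for $\BB_\lambda$ and Precup–Tymoczko's treatment of parabolic Hessenberg varieties. For $F_\bullet\in\Pet_\lambda$, let $0=s_0<s_1<\cdots<s_k=n$ be the indices $i$ with $N_\lambda F_i\subset F_i$, and record the composition $\alpha=(s_1-s_0,\ldots,s_k-s_{k-1})$. Let $\Pet_\lambda^{\circ}(\alpha)$ denote the locally closed piece with this exact stability pattern.

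\textbf{Step 1: structure of a block.} Fix $j$ and an index $i$ with $s_{j-1}<i<s_j$. Then $N_\lambda F_i\subset F_{i+1}$ but $N_\lambda F_i\not\subset F_i$. I will show by induction on $i$ that
\[
F_i=F_{s_{j-1}}+\operatorname{span}(v,N_\lambda v,\ldots,N_\lambda^{\,i-s_{j-1}-1}v)
\]
for some vector $v$. Consequently $F_{s_j}/F_{s_{j-1}}$ is a cyclic $N_\lambda$-module of dimension $\alpha_j$, and the partial flag inside the block is determined by the line spanned by the image of $v$ modulo $F_{s_{j-1}}+N_\lambda F_{s_j}$. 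Conversely, every chain of $N_\lambda$-stable subspaces $V_1\subset\cdots\subset V_k$ with cyclic successive quotients, together with a choice of generator line in each quotient, produces a flag in $\Pet_\lambda$. Thus $\Pet_\lambda^{\circ}(\alpha)$ fibers over a variety of $N_\lambda$-stable partial flags with cyclic subquotients.

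\textbf{Step 2: tableaux.} Let $\mu^{(j)}$ be the Jordan type of $N_\lambda|_{V_j}$. By the standard Hall-polynomial fact that an extension by a cyclic module of length $m$ changes the type by a horizontal strip of size $m$, each skew shape $\mu^{(j)}/\mu^{(j-1)}$ is a horizontal strip. Filling $\mu^{(j)}/\mu^{(j-1)}$ with the entry $j$ therefore gives a semistandard tableau of shape $\lambda$ and content $\alpha$. This refines $\Pet_\lambda^{\circ}(\alpha)$ into pieces $\Pet_\lambda^{\circ}(T)$ indexed by such tableaux $T$. Because the first column of $\lambda$ must receive the distinct entries $1,\ldots,\ell(\lambda)$, we always have $k\ge\ell(\lambda)$.

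\textbf{Step 3: dimension count.} On each $\Pet_\lambda^{\circ}(T)$ I will compute the dimension inductively on $j$, in the same way one computes Spaltenstein's dimension for Springer fibres. The quantities to add are:
\begin{itemize}
\item[(a)] the dimension of the set of cyclic submodules $V_j/V_{j-1}$ of type prescribed by the strip inside $\mathbb{C}^n/V_{j-1}$;
\item[(b)] the dimension of the set of generator lines, which is (number of generators) minus $1$, i.e.\ the dimension of the cyclic piece minus the number of Jordan blocks it meets, minus $1$.
\end{itemize}
The goal is to show that the total equals
\[
\sum_i i\lambda_i-k,
\]
independent of the particular tableau $T$. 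Granting this, the maximum dimension $\sum_i i\lambda_i-\ell(\lambda)$ is attained exactly when $k=\ell(\lambda)$. Sanity checks: $\lambda=(n)$ recovers $n-1$ for the Peterson variety, and $\lambda=(1,1)$ gives $1$.

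\textbf{Step 4: components.} I will show that each top-dimensional stratum $\Pet_\lambda^{\circ}(T)$ is irreducible, since it is an iterated bundle with irreducible fibres. Their closures are distinct, because $T$ can be recovered from a generic point of the closure, namely from the Jordan types of $N_\lambda|_{F_{s_j}}$. Hence the top-dimensional components correspond bijectively to semistandard tableaux of shape $\lambda$ with entries in $\{1,\ldots,\ell(\lambda)\}$. With $k=\ell(\lambda)$ such tableaux automatically use every value, which matches the content condition in the statement.

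The main obstacle is the dimension count in Step 3: showing that contributions (a) and (b) telescope exactly to $\sum_i i\lambda_i-k$, regardless of how the horizontal strips sit inside $\lambda$. My first attempt would be to reduce to the case of a single block, that is, counting cyclic submodules of a fixed type together with their generator lines. I would do this with a Spaltenstein-style row-by-row induction that peels off $V_1$ and passes to the quotient $\mathbb{C}^n/V_1$.
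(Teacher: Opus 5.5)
Your framework (exact stability pattern, cyclic blocks, chains of horizontal strips, top-dimensional strata as components) is a sensible geometric alternative to the paper's argument. However, the proposal leaves out the step that carries all the content, and the bookkeeping you set up for it is off.

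\textbf{The dimension count in Step~3 is missing, and its setup has errors.} The claim that \emph{every} stratum has dimension exactly $\sum_i i\lambda_i-k$ is only announced, never proved. Two pieces of the accounting need fixing.
\begin{itemize}
\item In (b), the generator lines of a cyclic block $C=F_{s_j}/F_{s_{j-1}}$ form $\mathbb{P}(C)\setminus\mathbb{P}(N_\lambda C)\cong\mathbb{A}^{\alpha_j-1}$. So the contribution is $\alpha_j-1$, regardless of how $C$ meets the Jordan blocks.
\item Likewise, in Step~1 the block is determined by the line $F_{s_{j-1}+1}/F_{s_{j-1}}$ in $C$, not by a line in $C/N_\lambda C$, which is one-dimensional.
\end{itemize}
With this correction, Step~3 becomes the following assertion: for every chain of horizontal strips, the variety of chains $V_1\subset\cdots\subset V_k$ of $N_\lambda$-stable subspaces with cyclic subquotients of those types has dimension $\sum_i(i-1)\lambda_i$. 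This generalizes Spaltenstein's theorem (the case $\alpha=(1,\ldots,1)$), and it is the real theorem to be proved.

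\textbf{The induction you sketch does not close.} You index strata by the types of $N_\lambda|_{V_j}$, but then you peel off $V_1$ and pass to $\mathbb{C}^n/V_1$. The data in the quotient does not determine those submodule types. For example, take $\lambda=(2,1)$ and $V_1=\operatorname{span}\{e_{11}\}$. Then $N_\lambda$ acts by zero on $\mathbb{C}^3/V_1$, yet $V_2$ has type $(2)$ or $(1,1)$ depending on the line $V_2/V_1$. So the fiber over $V_1$ is not a stratum of the same kind.

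\textbf{How to repair it.} Either record the types of the quotients $\mathbb{C}^n/V_j$ and peel off $V_1$ (this is the recursion behind Theorem~\ref{TRF}), or keep submodule types and peel from the top, choosing $V_{k-1}$ first. The map $F_\bullet\mapsto F_\bullet^{\perp}$ exchanges the two versions, preserves the Hessenberg condition (with $N_\lambda^{T}$), and reverses $\alpha$. Either way, the missing lemma is this: for a horizontal strip $\lambda/\mu$, the lines $[v]$ with $\lambda(v)=\mu$ form a nonempty irreducible set of dimension $\sum_i i(\lambda_i-\mu_i)-1$. Section~\ref{SSLin} supplies exactly this.
\begin{itemize}
\item By Corollary~\ref{cor:box_delete}, the condition $\lambda(v)=\mu$ prescribes the heights of $v,N_\lambda v,N_\lambda^2 v,\ldots$, so it cuts out an open subset of a linear space.
\item This subset is nonempty by Proposition~\ref{PRLAHST}.
\item Its dimension is $\sum_{j\in I_{\lambda/\mu}}\lambda^*_j=\sum_i i(\lambda_i-\mu_i)$, the degree of the count in Proposition~\ref{prop:counting_lambda_v_mu}.
\end{itemize}
Each block then contributes $\sum_i i(\lambda_i-\mu_i)-1$, and these telescope to $\sum_i i\lambda_i-k$.

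\textbf{Step~4 also needs more.} ``Iterated bundle with irreducible fibres'' is not enough, since local triviality is not evident. One fix: the centralizer of $N_\lambda$ is connected and acts transitively on $\{v:\lambda(v)=\mu\}$, because elements with the same height sequence are conjugate.

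\textbf{Comparison with the paper.} Once repaired, your route genuinely differs from the paper's. The paper never computes the dimension of a stratum. It reads the dimension and the number of top-dimensional components off the degree and leading coefficient of $\Poin(\Pet_{\lambda,\alpha};q^{1/2})$. That polynomial comes from the Escobar--Precup--Shareshian point count together with the Pieri recursion for $\KK_{\lambda\alpha}$, and the paper then specializes to $\alpha=(n)$ via the admissible decomposition. This yields only equinumerosity. Your stratification would give an explicit bijection between components and tableaux.
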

We prove this fact in Section \ref{SECTIONPOIN} as a corollary of a much
more general
result. This result implies that the dimension of $H^{\operatorname{top}}(\Pet_\lambda)$ is equal to
that of the irreducible representation of highest weight $\lambda$ of the special Lie algebra $\mathfrak{s} \mathfrak{l}_{\ell (\lambda)}$; see \cite[Theorem 6.3]{fulton2013representation}. It would be interesting to provide a representation-theoretic reason for this fact. We found the construction most related to this result in \cite[Chapter 4]{chriss1997representation}, where Chriss and Ginzburg define a subvariety of the partial flag variety using operator $N_\lambda$, and show its homology class is a $\mathfrak{s} \mathfrak{l}_{\ell (\lambda)}$ irreducible module of highest weight $\lambda$.

To better understand the components of the generalized Peterson variety $\Pet_\lambda$, we study the intersection of $\Pet_\lambda$ with the parabolic Hessenberg variety corresponding to $N_\lambda$ and $\alpha$. Define the \textbf{generalized parabolic Peterson variety} to be 
$$
\PPet_{\lambda,\alpha} :=\Hess(N_\lambda,h_\alpha)=\left\{F_{\bullet} \in \operatorname{Fl}(n): N_\lambda  F_i \subset F_{h_\alpha(i)} \text{ for all } i\in [n]\right\}.
$$ 
Every Hessenberg variety defined by a nilpotent matrix and a Hessenberg function $h(i)\le i+1$ is a generalized parabolic Peterson variety. In particular, 
\begin{itemize}
    \item [(1)] When $\alpha=(1,1,\ldots,1)$, $\Pet_{\lambda,(1,1,\ldots,1)}$ is the Springer fiber $\BB_\lambda$.
    \item[(2)]  When $\alpha=(n)$, $\Pet_{\lambda,(n)}$ is the generalized Peterson variety $\Pet_\lambda$.
    \item [(3)] When $\lambda=(1,1,\ldots,1)$, $\Pet_{(1,1,\ldots,1),\alpha}$ is the flag variety $\operatorname{Fl}(n)$. 
\end{itemize}

Let $\preceq$ be the refinement order on compositions and $\unlhd$ be the dominance order on the partitions. We say that a partition $\lambda$ dominates a composition $\alpha$ and denote it as $\alpha\unlhd \lambda$ if $\lambda$ dominates the integer partition obtained by rearranging the entries of $\alpha$ in decreasing order.We say a generalized parabolic Peterson variety $\Pet_{\lambda,\alpha}$ is \textbf{admissible} if $\alpha \unlhd \lambda$. We call a composition $\beta$ a \textbf{coarsest} element in a set of integer compositions if $\beta$ is not a refinement of any other element in the set.

Below are our two main theorems. We let $\lambda$ be a partition and $\alpha$ be a composition of $n$. 

\begin{thm}\label{thm:admissible_decomposition}
    When the generalized parabolic Peterson variety $\Pet_{\lambda,\alpha}$ is not admissible,  let $\left( \AA_\lambda\cap \RR_\alpha\right)^{\CST}$ be the set of all the coarsest compositions that are dominated by $\lambda$ and refine $\alpha$. There is a decomposition
    \begin{equation}
         \Pet_{\lambda,\alpha} = \bigcup _{\beta \in \left( \AA_\lambda\cap \RR_\alpha\right)^{\CST} } \Pet_{\lambda,\beta}
    \end{equation}
    such that no two varieties in the union are contained within one another.
\end{thm}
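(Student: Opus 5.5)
The plan is to prove the two inclusions separately and then the non-containment statement, all by attaching to each flag a canonical composition.

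\textbf{The inclusion $\supseteq$.} If $\beta$ refines $\alpha$, then every partial sum of $\alpha$ is a partial sum of $\beta$. Hence $h_\beta(i)\le h_\alpha(i)$ for all $i$, and so $\Pet_{\lambda,\beta}\subseteq \Pet_{\lambda,\alpha}$. This holds in particular for every $\beta\in(\AA_\lambda\cap\RR_\alpha)^{\CST}$.

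\textbf{The inclusion $\subseteq$.} Fix $F_\bullet\in \Pet_{\lambda,\alpha}$. Let $\gamma=\gamma(F_\bullet)$ be the composition whose partial sums are exactly the indices $i$ with $N_\lambda F_i\subseteq F_i$. Every partial sum of $\alpha$ is such an index, so $\gamma$ refines $\alpha$, and clearly $F_\bullet\in\Pet_{\lambda,\gamma}$. The key claim is that $\gamma\unlhd\lambda$. Once this is shown, the finiteness of the set of compositions gives a coarsest $\beta\in\AA_\lambda\cap\RR_\alpha$ with $\gamma\preceq\beta$. Then $F_\bullet\in\Pet_{\lambda,\gamma}\subseteq\Pet_{\lambda,\beta}$.

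To prove the claim, let $s<t$ be consecutive partial sums of $\gamma$, and set $W=F_t/F_s$ with induced nilpotent operator $N'$ and induced flag $W_i=F_{s+i}/F_s$. For $0<i<t-s$ we have $N'W_i\subseteq W_{i+1}$ and $N'W_i\not\subseteq W_i$. Since $W_{i+1}/W_i$ is one-dimensional, this forces $W_{i+1}=W_i+N'W_i$. By induction $W=\SPAN\{w,N'w,\dots\}$ for any $w$ spanning $W_1$, so $N'$ is regular on $W$.

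Thus $\mathbb{C}^n$ has an $N_\lambda$-stable filtration whose subquotients carry single Jordan blocks of sizes $\gamma_1,\dots,\gamma_k$. The dimension of $\ker(N^m)$ is subadditive along such a filtration, because $\ker N^m$ restricted to a filtration is left exact on each step. This gives
$$\textstyle\sum_{j\le m}\lambda'_j=\dim\ker N_\lambda^m\le\sum_{j}\min(m,\gamma_j)$$
for all $m$. Since $\sum_j\min(m,\gamma_j)$ is the sum of the first $m$ columns of the sorted $\gamma$, this says the conjugate of the sorted $\gamma$ dominates $\lambda'$. Conjugation reverses dominance, so $\gamma\unlhd\lambda$.

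\textbf{Non-containment.} Let $\beta\neq\beta'$ be in $(\AA_\lambda\cap\RR_\alpha)^{\CST}$. I would produce a flag $F_\bullet\in\Pet_{\lambda,\beta}$ with $\gamma(F_\bullet)=\beta$ exactly. If such a flag also lay in $\Pet_{\lambda,\beta'}$, then every partial sum of $\beta'$ would be an $N_\lambda$-invariant index, hence a partial sum of $\beta$. That would make $\beta$ a refinement of $\beta'$, contradicting that $\beta$ is coarsest.

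To build the flag, I need an $N_\lambda$-stable filtration of $\mathbb{C}^n$ with cyclic subquotients of dimensions $\beta_1,\dots,\beta_k$ in that order. Inside each subquotient I then take the Krylov flag of a cyclic vector, which is non-invariant at every interior index.

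The existence of this filtration whenever $\beta\unlhd\lambda$ is the step I expect to be the main obstacle. I would first try a greedy construction on the Jordan strings of $N_\lambda$, driven by a semistandard tableau of shape $\lambda$ and content $\beta$, which exists because $\mathcal{K}_{\lambda\beta}>0$. At step $j$, the module is spanned by the top $\beta_j$ boxes labelled $j$, taken in the quotient by the earlier pieces, so that they form a single chain under $N$. The semistandard column condition is what should make those boxes a single $N$-string modulo the previous steps. If this bookkeeping fails, I would fall back on the Green–Klein theorem: a nilpotent module of type $\lambda$ has a submodule filtration with subquotient types $(\beta_j)$ exactly when the corresponding Littlewood–Richardson/Kostka coefficient is nonzero.
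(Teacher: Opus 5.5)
Your proof is correct provided the existence step is settled by your Green--Klein fallback rather than by the greedy construction. Its skeleton matches the paper's: refinement gives $\supseteq$; every flag lies in $\Pet_{\lambda,\gamma}$ with $\gamma=\alpha_\lambda(F_\bullet)\in\AA_\lambda\cap\RR_\alpha$; and non-containment follows from realizing each $\beta\in\AA_\lambda$ exactly, as in Corollary~\ref{cor:iff_refinement}. You prove both halves of Proposition~\ref{prop:necessary_nonempty_partition_composition} differently from the paper. For necessity, the paper uses Lemma~\ref{lem:form_of_flag} and a count of ``Jordan chains.'' You instead filter by the invariant $F_s$ with cyclic subquotients, use subadditivity of $\dim\ker N_\lambda^m$, and use the fact that conjugation reverses dominance. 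This is self-contained and in fact tighter. The paper's strings $v_i,\dots,N_\lambda^{\alpha_i-1}v_i$ need not be $\lambda$-Jordan chains in its own sense, since $N_\lambda^{\alpha_i}v_i$ need only lie in an earlier piece of the flag rather than vanish. So its count needs an extra argument, which yours supplies.

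For sufficiency, drop the greedy construction; it fails already at the first step. An $N_\lambda$-stable span of basis vectors $e_{ij}$ that is cyclic must lie in a single Jordan string. Take $\lambda=(3,1)$ and $\beta=(2,2)$. The first piece must be $\langle v\rangle_\lambda$ with $\lambda(v)=(2)$, which forces $\height_\lambda(v)=1$ and $\height_\lambda(N_\lambda v)=3$. That means $v=ae_{11}+be_{21}+ce_{12}$ with $b,c\neq 0$, a vector mixing both strings. By contrast, the boxes labelled $1$ give $\operatorname{span}\{e_{11},e_{12}\}$, whose quotient has type $(1,1)$, not $(2)$.

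The Green--Klein route is valid, and you only need its Pieri case: a cyclic submodule of dimension $m$ with quotient of type $\mu$ exists iff $\lambda/\mu$ is a horizontal strip of size $m$. You iterate this along a chain of horizontal strips of sizes $\beta_1,\beta_2,\dots$, which exists by the Kostka recursion since $\KK_{\lambda\beta}>0$. That Pieri case is exactly what the paper proves by hand with heights in Proposition~\ref{PRLAHST}, before inducting on $\ell(\beta)$ via Corollary~\ref{cor:exist_pos_Kostka}. Your version is shorter but imports a nontrivial theorem. The paper's is self-contained, and its height machinery also produces the counts in Proposition~\ref{prop:counting_lambda_v_mu} that drive Theorem~\ref{TRF}.
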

We will call this the \textbf{admissible decomposition} of  $\Pet_{\lambda,\alpha}$.

\begin{thm}\label{thm:dimension_number_components}
    When $\Pet_{\lambda,\alpha}$ is admissible, it has $\KK_{\lambda \alpha}$ different maximum dimensional components of dimension
    \begin{equation}
   \dim\left(\Pet_{\lambda,\alpha} \right) =\sum _{i} i\lambda_i - \ell (\alpha),
\end{equation}
where $\KK_{\lambda \alpha}$ denotes the Kostka number and $\ell (\alpha)$ is the length of $\alpha$.
\end{thm}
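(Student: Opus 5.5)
We argue by induction on $n$, stratifying $\Pet_{\lambda,\alpha}$ by the position of the first block $F_{\alpha_1}$, in the spirit of Spaltenstein's argument for Springer fibers and Precup--Tymoczko's argument for parabolic Hessenberg varieties.

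\textbf{Setup.} Write $\alpha=(\alpha_1,\alpha')$ and set $m=\alpha_1$. The conditions defining $\Pet_{\lambda,\alpha}$ split into two parts.
\begin{itemize}
\item[(a)] $F_m$ is $N_\lambda$-stable, and the partial flag $F_1\subset\cdots\subset F_m$ satisfies $N F_i\subset F_{i+1}$ for $i<m$.
\item[(b)] The induced flag on $\mathbb{C}^n/F_m$ lies in $\Pet_{\mu,\alpha'}$, where $\mu$ is the Jordan type of $N_\lambda$ on $\mathbb{C}^n/F_m$.
\end{itemize}

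\textbf{Step 1: what condition (a) forces.} The restriction $N|_{F_m}$ has some Jordan type $\nu\subset\lambda$. The partial flag $F_1\subset\cdots\subset F_m$ is a point of the regular-type Peterson variety $\Pet_{\nu,(m)}$ inside $F_m$. Requiring $\nu$ to dominate $(m)$ forces $\nu=(m)$, so $F_m$ is a cyclic $N$-invariant subspace. (When $\alpha_1$ is not forced to be regular, we only use the weaker statement that $\Pet_{\nu,(m)}$ is nonempty.)

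To make this uniform, we reverse the roles: rather than the first block, we peel off the last block, or we use the parabolic reduction to a single Peterson-type condition. Concretely, we fibre $\Pet_{\lambda,\alpha}$ over the variety of $N$-stable subspaces $V=F_m$ with prescribed Jordan types $\nu=\mathrm{type}(N|_V)$ and $\mu=\mathrm{type}(N|_{\mathbb{C}^n/V})$. Over each such stratum the fibre is $\Pet_{\nu,(m)}\times\Pet_{\mu,\alpha'}$.

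\textbf{Step 2: dimension count.} The fibre factors have known dimensions.
\begin{itemize}
\item $\Pet_{\nu,(m)}$ has dimension $n(\nu)+m-\ell(\nu)$, using Proposition \ref{prop:generalized_Pet_dim} with $\sum_i i\nu_i=n(\nu)+m$.
\item $\Pet_{\mu,\alpha'}$ has dimension $\sum_i i\mu_i-\ell(\alpha')$ by induction, whenever $\alpha'\unlhd\mu$.
\end{itemize}
The stratum of $N$-stable subspaces with types $(\nu,\mu)$ has dimension at most $n(\lambda)-n(\nu)-n(\mu)$, with equality on the components indexed by Littlewood--Richardson data. This is the standard count used by Spaltenstein and by Precup--Tymoczko.

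Summing the three contributions gives at most
\[
n(\lambda)+m-\ell(\nu)+|\mu|-\ell(\alpha')=n(\lambda)+n-1-\ell(\alpha')-(\ell(\nu)-1).
\]
Since $n(\lambda)+n=\sum_i i\lambda_i$, this is at most $\sum_i i\lambda_i-\ell(\alpha)$. Equality holds exactly when $\nu$ is a single row, i.e.\ $\nu=(m)$, and the subspace stratum attains its maximal dimension.

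\textbf{Step 3: counting top components.} The pairs $(\lambda/\mu=\text{horizontal strip of size } m,\ \mu)$ are exactly the cases where $V$ is cyclic of type $(m)$ and the LR coefficient $c^{\lambda}_{(m)\mu}$ equals $1$. For each such pair the maximal stratum is irreducible. Its top components therefore number
\[
\sum_{\mu:\ \lambda/\mu \text{ horizontal } m\text{-strip}}\KK_{\mu\alpha'}=\KK_{\lambda\alpha},
\]
by the branching rule for Kostka numbers (peeling off the entries equal to $1$, up to reordering of content, since Kostka numbers are symmetric in the content).

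We also need every such $\mu$ with $\KK_{\mu\alpha'}\ne0$ to satisfy $\alpha'\unlhd\mu$, so that induction applies. This holds because $\KK_{\mu\alpha'}\neq 0$ is equivalent to dominance. Strata with $\KK_{\mu\alpha'}=0$ have $\Pet_{\mu,\alpha'}$ non-admissible. Theorem \ref{thm:admissible_decomposition} writes these as unions of admissible $\Pet_{\mu,\beta}$ with $\ell(\beta)>\ell(\alpha')$, so they have strictly smaller dimension by induction. Distinct strata give distinct components, and each top component closure meets its stratum in an open set.

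\textbf{Main obstacle.} The hard step is the dimension bound on the stratum of $N$-stable subspaces with prescribed $(\nu,\mu)$, together with irreducibility of the equality locus and the fact that the fibration is locally trivial enough for dimensions to add. I would first try to import this from Precup--Tymoczko's treatment of Steinberg varieties, where the same stratification appears. A secondary issue is the bookkeeping: the strata with $\nu\ne(m)$ must be shown to be genuinely of smaller dimension, not merely bounded by the same number.
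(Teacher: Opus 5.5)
Your approach differs from the paper's. You fibre over the $N_\lambda$-stable subspace $F_{\alpha_1}$; the paper fibres over the line $F_1=\SPAN\{v\}$.

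\textbf{How the paper's route works.} Since $\langle v\rangle_\lambda$ is automatically cyclic, the paper's strata are indexed by $i=\dim\langle v\rangle_\lambda\le\alpha_1$ and by $\mu=\lambda(v)$, where $\lambda/\mu$ is a horizontal $i$-strip. The flag up to $F_i$ is forced, and $\#\{v:\lambda(v)=\mu\}$ is computed exactly by elementary height arguments (Proposition \ref{prop:counting_lambda_v_mu}). This gives the exact $\FFq$-point recursion of Theorem \ref{TRF}, hence, by Proposition \ref{prop:q-counting}, a recursion for Poincar\'e polynomials. The theorem is then read off from degrees and leading coefficients:
\begin{itemize}
\item terms with $i<\alpha_1$ drop in degree because $\ell(\alpha^{(i)})=\ell(\alpha)$;
\item non-admissible children drop in degree by Lemma \ref{lem:dim_inequality}, which is the same use of Theorem \ref{thm:admissible_decomposition} that you make.
\end{itemize}
Because everything is an exact count, the paper never needs the dimension theory of invariant subspaces or any local triviality. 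Your block-at-a-time version is closer to Spaltenstein--Shimomura. It would give a more geometric description of the top components, as closures of loci where $F_{\alpha_1}$ is cyclic with quotient type $\mu$. But it has to pay for non-cyclic $F_{\alpha_1}$, and as written it does not.

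\textbf{Step 1 and the secondary issue.} The conclusion of Step 1, that $F_{\alpha_1}$ is cyclic, is false: all of $\BB_\lambda$ lies in $\Pet_{\lambda,\alpha}$. Discard it. Your Step 2 arithmetic already shows that strata with $\ell(\nu)\ge 2$ lose $\ell(\nu)-1$ dimensions. Upper bounds only need $\dim X\le \dim Y+\max\dim(\text{fibre})$, so your ``secondary issue'' is settled.

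\textbf{What is actually missing.}
\begin{itemize}
\item[(i)] You need the bound $\dim S_{\nu,\mu}\le n(\lambda)-n(\nu)-n(\mu)$, with $n(\lambda)=\sum_i(i-1)\lambda_i$, for arbitrary $(\nu,\mu)$. This is the degree statement for Hall polynomials \cite[Chapter II]{macdonald1998symmetric}, plus a passage from point counts to dimension, and nothing in the paper provides it. For $\nu=(m)$ alone, irreducibility and the value $n(\lambda)-n(\mu)$ do follow from Section \ref{SSLin}: the set $\{v:\lambda(v)=\mu\}$ is irreducible of dimension $\sum_j j(\lambda_j-\mu_j)$, and each cyclic $V$ has an $m$-dimensional set of generators.
\item[(ii)] On the cyclic strata, dimension adds by the fibre-dimension theorem. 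The count of $\KK_{\mu\overline{\alpha}}$ top components does not follow the same way. Over an irreducible base with isomorphic fibres, monodromy can permute the fibre's top components, so you only get ``at most $\KK_{\mu\overline{\alpha}}$''. You need triviality over a dense open subset of $S_{(m),\mu}$. One way is to choose, locally, a section of $V$ generating $V/N_\lambda V$ and a Jordan frame for $\CC^n/V$; this is possible because the ranks of all powers of the induced operator are constant there.
\item[(iii)] Proposition \ref{prop:generalized_Pet_dim} is deduced in the paper from this very theorem. So you may only use it for $|\nu|=m<n$, through the induction hypothesis. The case $m=n$ forces $\lambda=\alpha=(n)$ and must come from the classical Peterson variety.
\end{itemize}

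\textbf{A way to bypass (i) and (ii).} You can avoid the geometry by counting. Let $g^{\lambda}_{\nu\mu}$ be the Hall polynomial. The identity
$\#\Pet_{\lambda,\alpha}(\FFq)=\sum_{\nu,\mu}g^{\lambda}_{\nu\mu}(q)\,\#\Pet_{\nu,(m)}(\FFq)\,\#\Pet_{\mu,\overline{\alpha}}(\FFq)$
is exact. Its leading term $c^{\lambda}_{\nu\mu}q^{n(\lambda)-n(\nu)-n(\mu)}$, together with Proposition \ref{prop:q-counting}, finishes the argument exactly as in the paper.
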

Our Theorem \ref{thm:big_dim_components} below is a generalized version of Theorem \ref{thm:dimension_number_components}. Note that by letting $\alpha=(1,1,1,\ldots,1)$, Theorem \ref{thm:dimension_number_components} recovers the known result that $\dim (\BB_\lambda)=\sum _{i} (i-1)\lambda_i$, and the number of components of this dimension equals the cardinality of the standard Young tableaux of shape $\lambda$ \cite{spaltenstein1976fixed}. On the other hand,taking $\alpha=(n)$, Theorems \ref{thm:admissible_decomposition} and \ref{thm:dimension_number_components} together imply Proposition \ref{prop:generalized_Pet_dim}.

To approach Theorem \ref{thm:dimension_number_components}, we compute the \textbf{Poincaré polynomial} of $\Pet_{\lambda,\alpha}$, defined to be $$
\Poin(\PPet_{\lambda,\alpha};t):=\sum_j \dim_{\mathbb{C}}\left(H^j(\PPet_{\lambda,\alpha} ; \mathbb{C})\right) t^j,
$$
where $H^j(\PPet_{\lambda,\alpha} ; \mathbb{C})$ is the degree $j$ cohomology of $\PPet_{\lambda,\alpha}$. In \cite{tymoczko2006linear}, Tymoczko proved that every type A Hessenberg variety admits an affine paving, which implies that the singular cohomology of $\Pet_{\lambda,\alpha}$ is concentrated in
even degrees. Therefore, $  \Poin(\Pet_{\lambda,\alpha};q^{1/2})$is a polynomial in $\mathbb{Z}_{\ge 0}[q]$. We find the following recursive formula for $ \Poin(\Pet_{\lambda,\alpha};q^{1/2})$:

\begin{thm}\label{TRF}
When $\lambda$ is a partition and $\alpha$ is a composition of $n$, let $\alpha^{(i)}$ be the integer composition obtained by replacing the first entry $\alpha_1$ of $\alpha$ with $\alpha_1-i$.  For every integer partition $\lambda \not= (0)$,  we have
$$
\Poin(\Pet_{\lambda,\alpha};q^{1/2})=
  \sum_{i=1}^{\alpha_1} \sum_{\mu} \fN_{\lambda/\mu}(q) \cdot \Poin(\Pet_{\mu,\alpha^{(i)}};q^{1/2}),
   $$
where $\mu$ runs over every integer partition such that $|\mu|=|\lambda|-i$, $\lambda/\mu$ is a horizontal strip, and $\fN_{\lambda/\mu}(q)$ is a polynomial in $\mathbb{Z}[q]$ defined in Section \ref{SECTIONPOIN} below.
\end{thm}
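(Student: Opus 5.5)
We prove the formula by stratifying $\Pet_{\lambda,\alpha}$ according to where the first block of the flag sits, and we count points over $\FFq$. By Tymoczko's affine paving \cite{tymoczko2006linear}, $\Poin(\Pet_{\lambda,\alpha};q^{1/2})$ equals the number of $\FFq$-points of $\Pet_{\lambda,\alpha}$, and the same holds for its locally closed pieces whenever they are paved compatibly. So it suffices to show
$$\#\Pet_{\lambda,\alpha}(\FFq)=\sum_{i=1}^{\alpha_1}\sum_{\mu}\fN_{\lambda/\mu}(q)\,\#\Pet_{\mu,\alpha^{(i)}}(\FFq),$$
with $\fN_{\lambda/\mu}(q)$ defined as a point count.

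Write $a=\alpha_1$. A flag in $\Pet_{\lambda,\alpha}$ satisfies $NF_j\subset F_{j+1}$ for $j<a$ and $NF_a\subset F_a$. In particular $F_1=\langle v\rangle$ and $F_{j+1}\supseteq F_j+NF_j$, so $F_j\supseteq\langle v,Nv,\dots,N^{j-1}v\rangle$. Let $i\in[1,a]$ be the first index with $NF_i\subseteq F_i$, so that $F_i=\KK[N]v$ is cyclic of dimension $i$. We expect to show that $F_i$ is itself $N$-stable and that then $F_{i}=\langle v,\dots,N^{i-1}v\rangle$ with $N^iv\in F_i$. For the indices $j$ between $i$ and $a$ the condition becomes $NF_j\subset F_{j+1}$ on the quotient. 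The cleaner route is to take $i$ as the \emph{last} step before the quotient: define the stratum $S_i$ by requiring $F_i$ to be the smallest $N$-stable subspace in the flag, cyclic of dimension $i$. The quotient flag $F_\bullet/F_i$ in $\KK^n/F_i$ then satisfies exactly the conditions defining $\Pet_{\mu,\alpha^{(i)}}$ for $N|_{\KK^n/F_i}$. The first block becomes $a-i$ (possibly $0$, in which case the first part is dropped), and all later conditions are unchanged.

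The key input is the classical fact that for an $N$-stable cyclic subspace $V$ of dimension $i$, the Jordan type $\mu$ of $N$ on $\KK^n/V$ satisfies $|\mu|=|\lambda|-i$ with $\lambda/\mu$ a horizontal strip, since $V$ is cyclic. For each such $\mu$, we set
$$\fN_{\lambda/\mu}(q)=\#\{V\subseteq\FFq^n:\ V\ N\text{-stable, cyclic},\ \dim V=i,\ \text{type of }N|_{\FFq^n/V}=\mu\},$$
which depends only on $\lambda,\mu$ because the count can be taken for $N=N_\lambda$ fixed, and which is a polynomial by Hall-polynomial theory; the number of cyclic submodules with given cotype is given explicitly by Hall polynomials $g^\lambda_{\mu,(i)}$ divided by automorphism factors. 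This is the definition we would expect Section \ref{SECTIONPOIN} to adopt. The map $S_i\to\{V\}$, $F_\bullet\mapsto F_i$, has fibre over $V$ equal to the product of the choices of a full flag in $V$ compatible with the conditions $NF_j\subset F_{j+1}$ ($j<i$) and of a flag in $\Pet_{\mu,\alpha^{(i)}}$.

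The main obstacle is the first factor: we must check that inside a cyclic $N$-stable $V$ of dimension $i$ with $N|_V$ regular, the flags with $NF_j\subset F_{j+1}$ for $j<i$ and $F_i=V$, counted so that each flag of $\Pet_{\lambda,\alpha}$ falls in exactly one stratum, contribute $1$. That is, the decomposition must be arranged so that $i$ is chosen uniquely; a natural choice is the minimal $j\le a$ such that $F_j$ is $N$-stable. Then $F_1,\dots,F_{i-1}$ are not stable, and we would argue that this forces a unique Peterson flag in $V$. If this count is not $1$, the factor should be absorbed into $\fN_{\lambda/\mu}$, which is why the paper leaves $\fN$ to be defined later. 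The remaining verification is that the strata are disjoint and cover $\Pet_{\lambda,\alpha}$, using $F_a$ $N$-stable so that some $i\le a$ exists. The fibration count then gives the recursion, with the base case $\lambda=(0)$ giving $1$.
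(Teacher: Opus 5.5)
Your skeleton matches the paper's. You pass to $\FFq$-point counts (Proposition~\ref{prop:q-counting}; you only need $\Poin(\Pet_{\mu,\beta};q^{1/2})=\#\Pet_{\mu,\beta}(\FFq)$ for each variety in the recursion, not pavings compatible with strata). You split by the first $i$ with $F_i$ stable, and you identify $F_\bullet/F_i$ with a point of $\Pet_{\mu,\alpha^{(i)}}(\FFq)$.

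However, the step you single out as the main obstacle is false as stated. Fibring over $V=F_i$, an admissible initial segment $F_1\subset\cdots\subset F_i=V$ is determined by $F_1$. But $F_1$ can be the span of \emph{any} generator $v\in V\setminus N_\lambda V$, so there are $(q^i-q^{i-1})/(q-1)=q^{i-1}$ such segments, not one. With fibre count $1$, your recursion gives $\#\Pet_{(2)}(\FFq)=1+1=2$ for $\lambda=\alpha=(2)$, whereas $\Pet_{(2)}=\FL(2)$ has $q+1$ points.

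The paper avoids this by fibring over the line $F_1=\SPAN\{v\}$ (Lemma~\ref{lem:bijection_Uv}). That makes the coefficient $\#\{v\neq 0:\lambda(v)=\mu\}/(q-1)$. In your language this is $q^{i-1}g^\lambda_{\mu(i)}(q)$, where the Hall polynomial $g^\lambda_{\mu(i)}(q)$ counts cyclic $N_\lambda$-stable subspaces of dimension $i$ and cotype $\mu$ directly, with no division by automorphism factors. So your fallback of absorbing a factor into $\fN$ is forced, and the factor is $q^{i-1}$.

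The second gap is that the theorem concerns the specific polynomial $\fN_{\lambda/\mu}(q)=\prod_{j\in I_{\lambda/\mu}}g_j(q)/(q-1)$ of Definition~\ref{def:coe_polynomial}, not an abstract count. Defining $\fN$ as a number of subspaces only yields a recursion with unidentified coefficients. It is the explicit form that Theorem~\ref{thm:dimension_number_components} later relies on, namely leading term $q^{\sum_j j(\lambda_j-\mu_j)-1}$ with coefficient $1$.

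The missing ingredient is Proposition~\ref{prop:counting_lambda_v_mu}. By Corollary~\ref{cor:box_delete}, $\lambda(v)=\mu$ iff the heights $\height_\lambda(v)<\height_\lambda(N_\lambda v)<\cdots$ run exactly through $I_{\lambda/\mu}$. Counting preimages under $N_\lambda$ of prescribed height (Lemma~\ref{LemNKerH1}) and inducting on $|\lambda/\mu|$ then gives $\#\{v:\lambda(v)=\mu\}=\prod_{j\in I_{\lambda/\mu}}g_j(q)$. Alternatively, you could quote the known closed form of $g^\lambda_{\mu(i)}(q)$ from Hall--Littlewood theory and verify that $q^{i-1}g^\lambda_{\mu(i)}(q)$ equals that product. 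Either way, some explicit computation of this kind must be supplied.
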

We prove this theorem in Section \ref{SECTIONPOIN} using a counting method developed by Escobar, Precup, and Shareshian in \cite{Escobar_Precup_Shareshian_2026}.

The structure of this paper is as follows. In Section \ref{SSPre}, we introduce the notation and definitions that we will use in our proofs. In Section \ref{SSLin}, we develop results on the linear algebra of cyclic subspaces. In Section \ref{SSAdm}, we present the method to decompose the generalized parabolic Peterson variety and prove Theorem \ref{thm:admissible_decomposition}. In Section \ref{SECTIONPOIN}, we provide the recursive formula to calculate the Poincaré polynomial of $\Pet_{\lambda,\alpha}$ and prove Theorem \ref{thm:dimension_number_components}, Theorem \ref{TRF}, and Proposition \ref{prop:generalized_Pet_dim}. In the Appendix, we list notation and definitions used throughout the paper for the reader's reference.

\section{Preliminaries}\label{SSPre}

We introduce the notations used to state and prove our main results. Let $n$ be a positive integer and $[n]=\{1,2,\ldots ,n\}$.

\subsection{Integer composition and refinement order}
 An \textbf{integer composition} of $n$, or more
succinctly, a \textbf{composition} of $n$, is a list of positive integers $\alpha=(\alpha_1,\ldots,\alpha_\ell)$ such that $\sum_i \alpha=n$. 
 We use the letters $\alpha,\beta$ and $\gamma$ for integer compositions in this paper.
The number of positive entries in $\alpha$ is called the \textbf{length} of $\alpha$ and is denoted by $\ell(\alpha)$. The \textbf{size} of $\alpha$ is the sum of the entries of $\alpha$, which is denoted by $|\alpha|$. We make the convention that $\alpha_i=0$ if $i>\ell(\alpha)$ and $\alpha_{0}:=0$. We say that $(0)$ is the unique integer composition of $0$, and its length is $0$.

For every positive integer $j$, define the $j$-th \textbf{partial sum} $ps_j(\alpha)$ of $\alpha$ to be $ps_j(\alpha):=\sum_{i=1}^j \alpha_j$, and the \textbf{partial sum set} $PS(\alpha)$ of $\alpha$ to be 
the set of all partial sums of $\alpha$. Notice that the cardinality of $PS(\alpha)$ is equal to $\ell(\alpha)$. The map $\alpha \mapsto PS (\alpha)$ is a bijection between the set of integer compositions of $n$ and the subsets of $[n]$ that contain $n$.
\begin{ex}
    Let $\alpha=(2,3,1,2)$. We have $|\alpha|=8$ and $\ell(\alpha)=4$. The third partial sum $ps_3(\alpha)$ is $2+3+1=6$ and the partial sum set of $\alpha$ $PS(\alpha)=\{2,5,6,8\}$.
\end{ex}

We impose the \textbf{refinement order} on the set of compositions of the same size, as defined below. Given integer compositions $\alpha$ and $\beta$ such that $|\alpha|=|\beta|$, we say that $\alpha$ is a \textbf{refinement} of $\beta$ (or $\alpha$ \textbf{refines} $\beta$) and write $\alpha \preceq \beta$ if $PS(\beta) \subseteq PS(\alpha)$. We use $\RR_\alpha$ to denote the set of all integer compositions of size $|\alpha|$ that refine $\alpha$. The composition $(1,1\ldots,1)$ refines every composition of the same size, and every composition of $n$ is a refinement of $(n)$.
\begin{ex}
    Let $\alpha=(2,4,5)$,  $\beta=(2,2,2,2,3)$, and $\gamma= (2,3,1,5)$. Then, $PS(\alpha)=\{2,6,11\}$, $PS(\beta)=\{2,4,6,8,11\}$, and $PS(\gamma)=\{2,5,6,11\}$. We have $\beta,\gamma \in \RR_\alpha$, i.e. $\beta \preceq \alpha$ and $\gamma \preceq \alpha$, because $PS(\alpha)$ is a subset of both $PS(\beta)$ and $PS(\gamma)$.  None of $\beta$ or $\gamma$ refines the other, since $PS(\beta) \not\subset PS(\gamma)$ and $PS(\gamma)\not \subset PS(\beta)$.
\end{ex}

Suppose $A$ is a set of compositions that have the same size. For every composition $\alpha\in A$, we call $\alpha$ 
\begin{itemize}
    \item a \textbf{coarsest} element in $A$, if $\alpha\not \preceq \beta$ for every $\beta\in A$ such that $\beta\not=\alpha$;
    \item a \textbf{minimal length} element in $A$, if $\ell(\alpha)\le \ell(\beta)$ for every $\beta\in A$.
\end{itemize}
Accordingly, we define $A^{{\CST}}$ (resp. $A^{\min}$) as the subset of $A$ consisting of all the coarsest (resp. minimal length) elements of $A$. By definition, all elements in $A^{\min}$ must have the same length. The set $A^{\min}$ is always a subset of $A^{\CST}$, since for any two distinct compositions $\alpha$ and $\beta$ in $A$, the condition that $\ell(\alpha)\le \ell(\beta)$ implies $\alpha \not\preceq \beta$.

\begin{ex}
    If $A=\{(5,2),(4,3),(3,4),(2,4,1),(3,3,1),(3,2,1,1)\}$, then $A^{\CST}=\{(5,2),(4,3),(3,4),(2,4,1)\}$ and $A^{\MIN}=\{(5,2),(4,3),(3,4)\}$. Notice that $A^{\min}\subsetneq A^{\CST}$, all elements in $A^{\MIN}$ have length $2$, while there is one element in $A^{\CST}$ that has length $3$. 
\end{ex}

We define a binary operator $\circ$ on the set of integer compositions of the same size. Given integer compositions $\alpha$ and $\beta$ such that $|\alpha|=|\beta|$, we define $\alpha \circ \beta$ to be the integer composition such that $PS(\alpha \circ \beta)=PS(\alpha) \cup PS(\beta)$. It follows from the definition that $\alpha \circ \beta$ is the unique coarsest element among all integer compositions that refine both $\alpha$ and $\beta$. 
\begin{ex}
    Let $\alpha=(3,3,2)$ and $\beta=(2,2,3,1)$. Since $PS(\alpha)=\{3,6,8\}$ and $PS(\beta)=(2,4,7,8)$, we have $PS(\alpha)\cup PS(\beta)=\{2,3,4,6,7,8\}$. Therefore, $(3,3,2)\circ (2,2,3,1)=(2,1,1,2,1,1)$ which is a refinement of both $(3,3,2)$ and $(2,2,3,1)$.
\end{ex}

\subsection{Integer partition and dominance order}
 An \textbf{integer partition} of $n$, or for short, a \textbf{partition} of $n$, is a list of positive integers $\lambda=(\lambda_1,\ldots,\lambda_k)$ such that the sum of the entries of $\lambda$ is $n$ and $\lambda_1\ge \lambda_2\ldots\ge \lambda_k$. Integer partitions form a proper subset of the set of integer compositions. We use the letters $\lambda$, $ \mu$, and $\nu$ to denote integer partitions in this paper.

We impose the \textbf{dominance order} on the set of all integer partitions of the same size, defined as follows. Given integer partitions $\lambda$ and $\mu$ such that $|\lambda|=|\mu|$, we say that $\lambda$ \textbf{dominates} $\mu$ and write $\lambda \unrhd \mu$ if $ps_j(\lambda)\ge ps_j(\mu)$ for all $j\ge 1$. The partition $(n)$ dominates every partition of the same size, and the partition $(1,1,\ldots,1)$ is dominated by every partition of $n$.

Given a partition $\lambda$ and a composition $\alpha$ such that $|\alpha|=|\lambda|$, we say that $\alpha$ is \textbf{dominated by} $\lambda$ and write $\lambda \unrhd \alpha$ if $\lambda \unrhd \mu$, where $\mu$ is the integer partition obtained by rearranging the entries of $\alpha$ in decreasing order. We use $\AA_\lambda$ to denote the set of all integer compositions that are dominated by $\lambda$. From the definition, it follows that every composition $\alpha\in \AA_\lambda$ satisfies $\ell(\alpha)\ge \ell(\lambda)$.

\begin{ex}\label{EX2231}
    Let $\lambda=(3,1)$. The partitions dominated by $(3,1)$ are $(3,1), (2,2), (2,1,1)$, and $(1,1,1,1)$. The coarsest elements among them are $(3,1)$ and $(2,2)$.  
    
     The set $\AA_{(3,1)}$ contains $(3,1), (2,2), (1,3), (2,1,1), (1,2,1),$ $(1,1,2)$, and $(1,1,1,1)$. Below is the Hasse diagram of elements in $\AA_{(3,1)}$ under the refinement order.
$$
\begin{tikzpicture}[scale=.7]
  \node (31) at (-4,2) {$(3,1)$};
  \node (22) at (0,2) {$(2,2)$};
  \node (13) at (4,2) {$(1,3)$};
  \node (211) at (-2,0) {$(2,1,1)$};
  \node (121) at (0,0) {$(1,2,1)$};
  \node (112) at (2,0) {$(1,1,2)$};
  \node (1111) at (0,-2) {$(1,1,1,1)$};
  \draw (1111) -- (112) -- (13) -- (121) -- (1111) -- (211) -- (31);
  \draw (112)--(22) -- (211);
  \draw (31)--(121);
\end{tikzpicture}
$$
In this case, the sets $\AA_{(3,1)}^{\CST}$ and $\AA_{(3,1)}^{\MIN}$ are the equal and contain elements $(3,1)$, $(2,2)$, and $(1,3)$.
\end{ex}

\begin{ex}\label{EX2222}
    Let $\lambda=(2,2)$. The partitions dominated by $(2,2)$ are $(2,2), (2,1,1)$, and $(1,1,1,1)$. The coarsest element among them is $(2,2)$.  
    
     The set $\AA_{(2,2)}$ contains $ (2,2), (2,1,1), (1,2,1), (1,1,2),$ and $(1,1,1,1)$. Below is the Hasse diagram of elements in $\AA_{(2,2)}$ under the refinement order.
$$
\begin{tikzpicture}[scale=.7]
  \node (22) at (0,2) {$(2,2)$};
  \node (121) at (-6,0) {$(1,2,1)$};
  \node (211) at (-2,0) {$(2,1,1)$};
  \node (112) at (2,0) {$(1,1,2)$};
  \node (1111) at (0,-2) {$(1,1,1,1)$};
  \draw (1111) -- (112) -- (22) -- (211) -- (1111) -- (121);
\end{tikzpicture}
$$
In this case, the set $\AA_{(2,2)}^{\CST}$ contains $(2,2)$ and $(1,2,1)$, while the set $\AA_{(2,2)}^{\MIN}$ only contains $(2,2)$.
\end{ex}

\subsection{Horizontal strips}
The \textbf{Young diagram} is a finite collection of boxes arranged in left-justified rows, with the row lengths in non-increasing order. The \textbf{shape} of a Young diagram is a partition $\lambda$ such that $\lambda_i$ is the number of boxes in the $i$-th row of the Young diagram. We denote the \textbf{transpose partition} of $\lambda$ as $\lambda^*$, which is defined to be the partition such that
    $$\lambda_j^*=\#\ \{\text{
 Boxes in the $j$-th column of the Young diagram of $\lambda$} \}.
$$
By definition, $\lambda_j^*$ also equals the number of entries of $\lambda$ that are greater than or equal to $j$.

\begin{ex}
    If $\lambda=(4,2,2)$, then $\lambda$ has length $3$ and the Young diagram of $\lambda$ is the following.
    $$\ytableausetup
 {mathmode, boxframe=normal, boxsize=1em}\ydiagram{4,2,2}$$
 Its transpose partition $\lambda^*$ is $(3,3,1,1)$, since there are $3$ boxes in the first and second columns of $\lambda$ and $1$ box in the third and fourth columns of $\lambda$.
\end{ex}

 Let $\lambda$ and $\mu$ be different  partitions such that $\lambda_i \geq \mu_i$ for all $i$. The \textbf{skew Young diagram} of shape $\lambda / \mu$ is the collection of boxes in the Young diagram of shape $\lambda$ that do not appear in the Young diagram of shape $\mu$. We denote the skew Young diagram of shape $\lambda / \mu$ simply by $\lambda / \mu$.

\begin{ex}\label{exnh}
    The skew Young diagram for shape $(4,2)/(3,1)$ is shown in gray below.
    $$
\begin{ytableau}
{} & {} & {} & *(gray){} \\
{}  & *(gray){} 
\end{ytableau}
$$
\end{ex}

\begin{ex}\label{exh}
    The skew Young diagram for the shape $(4,2,2,1)/(1,1,1)$ is shown in gray below. $$\begin{ytableau}
{}  & *(gray){}  & *(gray){} & *(gray){} \\
{}  & *(gray){}\\ 
{}  & *(gray){}\\ 
*(gray){}
\end{ytableau}$$
\end{ex}

Our arguments use a specific class of skew Young diagrams called horizontal strips.
     A skew Young diagram of shape $\lambda/\mu$ is called a \textbf{horizontal strip} if it contains at most one box in each column.
In other words, $\lambda/\mu$ is a horizontal strip if and only if $0\le \lambda^*_i-\mu^*_i\le 1$ for all $i$. Equivalently, $\lambda/\mu$ is a horizontal strip if and only if we have $\lambda\not=\mu$ and $\lambda_i \le \mu_i\le \lambda_{i+1}$ for all $i\in [\ell(\lambda)]$. When $\lambda/\mu$ is a horizontal strip, we let $I_{\lambda/\mu}$ be the set that contains all the column indices of the boxes in the skew Young diagram $\lambda/\mu$. In other words, $I_{\lambda/\mu}$ is the set of all indices $j$ such that $\lambda_j^*-\mu_j^*=1$. In Examples \ref{exnh} and \ref{exh}, the skew tableau $(4,2)/(3,1)$ is a horizontal strip, and $I_{(4,2)/(3,1)}=\{2,4\}$. The skew tableau $(4,2,2,1)/(1,1,1)$ from Example \ref{exh} is not a horizontal strip. We make the convention that when $\lambda$ is of length $1$, then $\lambda/(0)$ is a horizontal strip that has the same shape as the Young diagram of $\lambda$. For example, $(4)/(0)$ is the horizontal strip shown in gray below. $$\begin{ytableau}
*(gray){} & *(gray){} & *(gray){} &*(gray){}
\end{ytableau}$$

For every partition $\lambda$, we let $\Lambdalambda$ be the set of all partitions $\mu$ such that $\lambda/\mu$ is a horizontal strip.
If $m$ is an integer, we let $\Gamma(\lambda,m)$ be the set of all partitions $\mu$ such that $\lambda/\mu$ is a horizontal strip and $|\mu|=m$
\begin{ex}
    When $\lambda=(3,1)$, we have
    \begin{align*}
        &\Gamma\bigl((3,1)\bigr)=\bigl\{(3,1),(3),(2,1),(2)\bigr\},\\
    &\Gamma\bigl((3,1),2\bigr)=\bigl\{(2),(1,1)\bigr\},\\
    &\Gamma\bigl((3,1),3\bigr)=\bigl\{(3),(2,1)\bigr\}.
    \end{align*}
\end{ex}

\subsection{Semistandard Young tableau}\label{ssection:ssYT}
A \textbf{semistandard Young tableau} of shape $\lambda$ and content $\alpha$ is a filling of the Young diagram of shape $\lambda$ with numbers $1,2,\ldots,\ell (\alpha)$ such that $i$ appears $\alpha_i$ times, and the filling weakly increases along each row and strictly increases down each column.

We denote the set of all semistandard Young tableaux of shape $\lambda$ and content $\alpha$ by $\SST(\lambda,\alpha)$. The \textbf{Kostka number} $K_{\lambda\alpha}$ is the cardinality of $\SST(\lambda,\alpha)$. It is well-known that $K_{\lambda\alpha} >0 $ if and only if $\lambda \unrhd \alpha$; see \cite[Subsection 4.3]{fulton2013representation}. We take the convention that $\KK_{(0)(0)}=1$.

When $\ell$ is an integer, let $\SST(\lambda,\ell)$ denote the set of all semistandard Young tableaux whose content $\alpha$ satisfies $\ell(\alpha)=\ell$. Since $\SST(\lambda,\alpha)$ is not empty if and only if $\alpha\in \AA_\lambda$, we have
\begin{equation}
\SST(\lambda,\ell)=\coprod _{\substack{\alpha\in \AA_\lambda:\\ \ell(\alpha)=\ell(\lambda)}}\SST(\lambda,\alpha),
\end{equation}
which implies the equation
\begin{equation}\label{equation:sumofKnumber}
    \# \SST(\lambda,\ell) = \sum_{\substack{\alpha\in \AA_\lambda:\\ \ell(\alpha)=\ell(\lambda)}} \KK_{\lambda\alpha}.
\end{equation}
 The following two special cases are important to our study.
\begin{itemize}
    \item [(1)] When $\ell=|\lambda|$,  $\SST(\lambda,|\lambda|)=\SST(\lambda,(1,1,\ldots,1))$ is the set of standard Young tableaux of shape $\lambda$. Its cardinality is given by the hook length formula, which equals the dimension of the irreducible representation corresponding to $\lambda$ of the symmetric group $S_n$, see \cite[Formula 4.12]{fulton2013representation}. For example, the set $\SST((4,1),5)$ contains the following $4$ elements. 
    $$
\ytableausetup
 {mathmode, boxframe=normal, boxsize=1em}
\begin{ytableau}
1 &2 &3 &4\\
5
\end{ytableau}
\quad
\begin{ytableau}
1 &2 &3 &5\\
4
\end{ytableau}
\quad
\begin{ytableau}
1 &2 &4 &5\\
3
\end{ytableau}
\quad
\begin{ytableau}
1 &3 &4 &5\\
2
\end{ytableau}
$$
Since $\SST((4,1),5)=\SST((4,1),(1,1,1,1,1))$, we have $$\KK_{(4,1)(1,1,1,1,1)}=4.$$

    \item [(2)] When $\ell=\ell(\lambda)$, the cardinality of $\SST(\lambda,\ell(\lambda))$ equals 
    $$\prod_{1\le i<j \le \ell(\lambda)} \left(\frac{\lambda_i-\lambda_j+j-i}{j-i}\right)$$ 
    which is the
 dimension of the irreducible representation of highest weight $\lambda$ of the special Lie algebra $\mathfrak{s} \mathfrak{l}_{\ell (\lambda)}$, see \cite[Theorem 6.3]{fulton2013representation}. For example, the set $\SST((3,2,1),3)$ contains the following $8$ elements.
$$
\justifying{
\ytableausetup
 {mathmode, boxframe=normal, boxsize=1em}
\begin{ytableau}
1 &1 &1\\
2 &2\\
3
\end{ytableau}
\quad
\begin{ytableau}
1 &1 &1\\
2 &3\\
3
\end{ytableau}
\quad
\begin{ytableau}
1 &1 &3\\
2 &2\\
3
\end{ytableau}
\quad
\begin{ytableau}
1 &1 &2\\
2 &2\\
3
\end{ytableau}}
$$
$$
\justifying{
\begin{ytableau}
1 &1 &3\\
2 &3\\
3
\end{ytableau}
\quad
\begin{ytableau}
1 &1 &2\\
2 &3\\
3
\end{ytableau}
\quad
\begin{ytableau}
1 &2 &2\\
2 &3\\
3
\end{ytableau}
\quad
\begin{ytableau}
1 &2 &3\\
2 &3\\
3
\end{ytableau}}
$$
Notice that these elements come from the sets $\SST((3,2,1),\alpha)$, $\alpha=(3,2,1)$, $(3,1,2)$,$(2,3,1)$, $(2,1,3)$, $(1,3,2),(1,2,3)$, and $(2,2,2)$, which are the compositions of length $3$ dominated by $(3,2,1)$. We have 
\begin{itemize}
    \item $\KK_{(3,2,1)(3,2,1)}$, $\KK_{(3,2,1)(3,1,2)}$, $ \KK_{(3,2,1)(2,1,3)}$, $\KK_{(3,2,1)(2,3,1)}$, $\KK_{(3,2,1)(1,3,2)}$, and $\KK_{(3,2,1)(1,2,3)}$ are all equal to $1$;
    \item $\KK_{(3,2,1)(2,2,2)}$ is equal to $2$.
\end{itemize}
\end{itemize}

For every composition $\alpha\not=(0)$, let 
$\overline{\alpha}$ be the composition obtained by deleting the first entry from $\alpha$. For example, $\overline{(3,1,2)}=(1,2)$ and $\overline{(4)}=(0)$. Clearly, we always have $\ell(\overline{\alpha})=\ell(\alpha)-1$. By Pieri's formula; see \cite[Formula A.7]{fulton2013representation}, we have the following lemma on the recursion of Kostka numbers. A statement equivalent to this lemma, using the language of symmetric functions, can be found in \cite[Chapter I, (5.16)]{macdonald1998symmetric}.

\begin{lem}\label{lem::kostka_number_recursion}
   For every partition $\lambda$ and composition $\alpha$ such that $|\lambda|=|\alpha|>0$, we have that
\begin{equation}\label{eqution:kostka_number_recursion}
         \KK_{\lambda \alpha}=\sum_{\substack{\mu \in \Gamma(\lambda,|\overline{\alpha}|)\\}}\KK_{\mu\overline{\alpha}}
    \end{equation}
\end{lem}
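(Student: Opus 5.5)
We prove Lemma \ref{lem::kostka_number_recursion} with a bijection on tableaux that removes the entries equal to $1$. The Pieri formula gives a symmetric-function route to the same identity, but the combinatorial argument is more self-contained. Fix a partition $\lambda$ and a composition $\alpha$ with $|\lambda|=|\alpha|>0$. Take a tableau $T\in\SST(\lambda,\alpha)$ and delete every box containing $1$. Then relabel each remaining entry $k\ge 2$ as $k-1$. Call the result $\phi(T)$. We will show that $\phi$ is a bijection
$$\SST(\lambda,\alpha)\longrightarrow\coprod_{\mu\in\Gamma(\lambda,|\overline{\alpha}|)}\SST(\mu,\overline{\alpha}).$$
Taking cardinalities then gives \eqref{eqution:kostka_number_recursion}.

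First we check that $\phi$ is well defined. Rows weakly increase and $1$ is the smallest possible entry, so the boxes containing $1$ form an initial segment of each row. Columns strictly increase, so a $1$ can only sit in the top box of a column. Hence the boxes containing $1$ all lie in the first row, and they form a left-justified block of length $\alpha_1$. The boxes that remain therefore form a Young diagram of some partition $\mu$. We have $\lambda/\mu$ equal to the first $\alpha_1$ boxes of row $1$, which is at most one box per column, so it is a horizontal strip. Also $|\mu|=|\lambda|-\alpha_1=|\overline{\alpha}|$.

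\emph{Correction to check.} As written, this shows that the $1$'s occupy part of the first row only. That is the wrong statement to aim for, and $\mu$ should not be described as ``$\lambda$ minus part of row one.'' The right picture is the usual one: remove the \emph{largest} entry, or equivalently think of the $1$'s as forming a horizontal strip at the \emph{inner} corner, so that $\lambda/\mu$ may span several rows. To get the stated recursion, in which $\mu$ is obtained by removing $\alpha_1$ boxes forming a horizontal strip on the outside of $\lambda$, we instead remove the entries equal to $\ell(\alpha)$. These occupy a horizontal strip of outer boxes of size $\alpha_{\ell(\alpha)}$. This removes the last part of $\alpha$, not the first.

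We then use the standard fact that $\KK_{\lambda\alpha}$ is invariant under permuting the entries of $\alpha$, which follows from the symmetry of Schur functions or from Bender--Knuth involutions. Reversing the order of $\alpha$ moves $\alpha_1$ to the last position, where it is the part being removed. We then apply the removal bijection to the reversed composition. The result is a tableau of shape $\mu\in\Gamma(\lambda,|\lambda|-\alpha_1)$ whose content is the reverse of $\overline{\alpha}$. Applying the invariance once more replaces this content by $\overline{\alpha}$ itself.

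It remains to check that removing the largest entry is a bijection. Call the largest label $m=\ell(\alpha)$.

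1. The boxes containing $m$ form a horizontal strip $\lambda/\mu$. Two copies of $m$ cannot lie in the same column, because columns strictly increase. A box containing $m$ can have nothing to its right or below it except another $m$, so the remaining boxes form the diagram of a partition $\mu$.

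2. The remaining filling is a semistandard tableau of shape $\mu$ and content $(\alpha_1,\dots,\alpha_{m-1})$, since deleting the largest entries preserves the row and column conditions.

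3. For the inverse, start from any horizontal strip $\lambda/\mu$ of size $\alpha_m$ and any tableau in $\SST(\mu,(\alpha_1,\dots,\alpha_{m-1}))$. Fill the strip with $m$. The result is semistandard, because each column gains at most one box, placed at the bottom and larger than every other entry.

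The main obstacle is the reduction from ``remove the first part'' to ``remove the last part.'' We handle it by citing the symmetry of Kostka numbers in the content, which is the standard route via the Bender--Knuth involution. The rest of the argument is routine.
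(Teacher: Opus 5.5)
Your final argument is correct, and it takes a different route from the paper's. The paper gives no tableau argument; it simply invokes Pieri's formula. In that approach one writes $h_\alpha=h_{\alpha_1}h_{\overline{\alpha}}$, expands $h_{\overline{\alpha}}=\sum_\mu \KK_{\mu\overline{\alpha}}s_\mu$, multiplies each $s_\mu$ by $h_{\alpha_1}$ using Pieri, and compares coefficients of $s_\lambda$ with $h_\alpha=\sum_\lambda \KK_{\lambda\alpha}s_\lambda$. The commutativity of the $h_r$ then makes deleting the first part of $\alpha$ exactly as easy as deleting the last, so no separate symmetry statement is needed, but Pieri's rule is a black box.

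You instead prove the ``delete the last part'' recursion by an explicit bijection: the $m$'s, with $m=\ell(\alpha)$, form a horizontal strip on the outer rim of $\lambda$. You then pay for moving $\alpha_1$ to the end with the Bender--Knuth symmetry of $\KK_{\lambda\alpha}$ in its content. The paper also takes that symmetry for granted, in the remark following the lemma. Your route makes the tableau-level content visible, at the price of one extra standard fact. You can economize by permuting $\alpha$ to $(\alpha_2,\dots,\alpha_m,\alpha_1)$. Then removing the largest label leaves content exactly $\overline{\alpha}$, and symmetry is used once rather than twice.

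When you write it up, delete the opening paragraph instead of keeping it with a correction note. The map $\phi$ that removes the $1$'s does not land in $\coprod_\mu\SST(\mu,\overline{\alpha})$. The $1$'s do fill the first $\alpha_1$ boxes of row $1$, but the remaining boxes then form the skew shape $\lambda/(\alpha_1)$, not a Young diagram. So the sentence claiming they form the diagram of a partition $\mu$, with $\lambda/\mu$ equal to those boxes, is false. That route only gives $\KK_{\lambda\alpha}=\KK_{\lambda/(\alpha_1),\overline{\alpha}}$. Converting this into the stated sum requires expanding $s_{\lambda/(\alpha_1)}$ in Schur functions, which is Pieri's rule again. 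So the phrase about the $1$'s forming a horizontal strip ``at the inner corner'' is not an alternative proof.

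Finally, tighten step 1. Below a box containing $m$ there is no box at all, not even another $m$. Together with the $m$'s forming a terminal segment of each row, this yields $\mu_i\ge\mu_{i+1}$: otherwise box $(i,\mu_{i+1})$ would contain $m$ and have box $(i+1,\mu_{i+1})$ beneath it. With the paper's conventions $\overline{(n)}=(0)$ and $\KK_{(0)(0)}=1$, the case $\ell(\alpha)=1$ is covered by the same bijection.
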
 
Notice that, since the Kostka number $\KK_{\lambda,\overline{\alpha}}\not =0$, if and only if $\overline{\alpha}\unlhd \lambda$, Equation \ref{eqution:kostka_number_recursion} is equivalent to the following
\begin{equation}\label{eqution:kostka_number_recursion_2}
         \KK_{\lambda \alpha}=\sum_{\substack{\mu \in \Gamma(\lambda,|\overline{\alpha}|),\\\overline{\alpha}\unlhd \lambda}}\KK_{\mu\overline{\alpha}}
\end{equation}
\begin{ex}\label{ex:lambda42_alpha_213}
    Let $\lambda=(4,2)$ and $\alpha=(2,1,3)$. Then $\overline{(2,1,3)}=(1,3)$, $|\overline{(2,1,3)}|=4$, and 
    $$\LAM\bigl((4,2),4\bigr)=\bigl\{(4), (3,1),(2,2)\bigr\}.$$ Then, by Equation (\ref{eqution:kostka_number_recursion}), we have
$$
\KK_{(4,2)(2,1,3)}=  \KK_{(4)(1,3)} + \KK_{(3,1)(1,3)}+ \KK_{(2,2)(1,3)}.
$$
We can verify that this equation holds by noting that the left-hand side equals
    \begin{align*}
     \KK_{(4,2)(2,1,3)}=    \# \Big\{ \begin{ytableau}
1 &1 &2 &3\\
3 &3\\
\end{ytableau}
,\ 
\begin{ytableau}
1 &1 &3 &3\\
2 &3\\
\end{ytableau}
\Big\}&=2,
\end{align*}
while the right-hand side also equals 
\begin{align*}
 \KK_{(4)(1,3)} + \KK_{(3,1)(1,3)}+ \KK_{(2,2)(1,3)}=\#\Bigl\{
 \begin{ytableau}
1 &1 &1 &2
\end{ytableau}
\Bigr\}+
\#\Big\{
 \begin{ytableau}
1 &2 &2\\ 2
\end{ytableau}
\Big\}+\# \emptyset=2.
\end{align*}
\end{ex}

\begin{remark}
    We choose $\overline{\alpha}$ to be the map that deletes the first entry from $\alpha$ because it is convenient for our proofs. However, the Kostka number $\KK_{\lambda \alpha}$ is invariant under permutations of the entries of $\alpha$. Thus, if we choose $\overline{\alpha}$ to be the map that deletes any nonzero entry from $\alpha$, Equation (\ref{eqution:kostka_number_recursion}) still holds.
\end{remark}

The following statement is an immediate
corollary of
Lemma \ref{lem::kostka_number_recursion}.

\begin{cor}\label{cor:exist_pos_Kostka}
    For every partition $\lambda$ and composition $\alpha$ such that $|\lambda|=|\alpha|>0$, if $K_{\lambda\alpha}>0$, then there exists some
partition $\mu\in \Gamma(\lambda,\overline{\alpha})$ such that $\KK_{\mu\overline{\alpha}}>0$.
\end{cor}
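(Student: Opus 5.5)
The corollary follows directly from Lemma \ref{lem::kostka_number_recursion}; the plan is to argue by contradiction using nonnegativity of the summands. Note that $\Gamma(\lambda,\overline{\alpha})$ in the statement should be read as $\Gamma(\lambda,|\overline{\alpha}|)$, the index set of Equation (\ref{eqution:kostka_number_recursion}). Fix a partition $\lambda$ and a composition $\alpha$ with $|\lambda|=|\alpha|>0$ and $\KK_{\lambda\alpha}>0$.

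First, apply Equation (\ref{eqution:kostka_number_recursion}) to write
\[
\KK_{\lambda\alpha}=\sum_{\mu\in\Gamma(\lambda,|\overline{\alpha}|)}\KK_{\mu\overline{\alpha}}.
\]
Every term on the right is the cardinality of a set $\SST(\mu,\overline{\alpha})$, so it is a nonnegative integer. This includes the degenerate case $\overline{\alpha}=(0)$, where the only relevant $\mu$ is $(0)$ and $\KK_{(0)(0)}=1$ by convention.

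Suppose, for contradiction, that $\KK_{\mu\overline{\alpha}}=0$ for every $\mu\in\Gamma(\lambda,|\overline{\alpha}|)$. This includes the case where the index set is empty. Then the right-hand side is $0$, which contradicts $\KK_{\lambda\alpha}>0$. Hence at least one summand is positive, and the corresponding $\mu$ is the desired partition.

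The only delicate point is the edge case $\ell(\alpha)=1$. Here $\overline{\alpha}=(0)$, and we must make sure the lemma's index set contains $\mu=(0)$, using the convention that $\lambda/(0)$ is a horizontal strip when $\ell(\lambda)=1$. Since $\KK_{\lambda(n)}>0$ forces $\lambda=(n)$, this convention applies, and the edge case is consistent with the lemma as stated.
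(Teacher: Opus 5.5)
Your argument is correct and is exactly the paper's reasoning: the corollary is presented as an immediate consequence of Lemma \ref{lem::kostka_number_recursion}, because a positive integer written as a sum of nonnegative Kostka numbers must have a positive summand. Your reading of $\Gamma(\lambda,\overline{\alpha})$ as $\Gamma(\lambda,|\overline{\alpha}|)$ is correct, and your check of the case $\ell(\alpha)=1$ (which forces $\lambda=(n)$ and uses $\KK_{(0)(0)}=1$) is consistent with the paper's conventions.
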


\section{Linear algebra of cyclic subspaces}\label{SSLin}
In this Section, we establish some results from linear algebra that
 we will use in the proofs of the main theorems. We let $\K$ be a field that can be either $\CC$ or a finite field $\FFq$. For every partition $\lambda$, let $N_\lambda$ be the nilpotent matrix in Jordan form of Jordan type $\lambda$. Let $\left( e_{ij} \right)_\lambda$, where $i\in [k],j\in [\lambda_i]$, be
 an orthogonal Jordan basis of $N_\lambda$ such that $$N_\lambda e_{ij}=\begin{cases}
 e_{i(j-1)},\ j>1\\
 0,\ j=1.
 \end{cases}$$

\subsection{Height of a vector}\label{ssection:31}
Throughout this section, let $v$ be a vector in $\K^n$. We also assume that $\lambda$ is a partition of $n$.
\begin{define}
    Given a vector $v\in \K^n$ and a partition $\lambda$, if $v\not=0$, we define the \textbf{height} of $v$ respect to $\lambda$ as 
    $$\operatorname{ht}_\lambda (v):=\max \left\{d: v \in \operatorname{im}\left({N_\lambda} ^{d-1} \right)\right\}.$$ 
    We also make the convention that $\height_\lambda (0)=\infty$.
\end{define}

We list several properties regarding heights that we will use frequently in the proofs. 
\begin{pr}\label{PRHEIGHT}
    \begin{itemize}
        \item [(1)] Let $d$ be a positive integer, for every vector $v\in\K^n$, $v$ is in $ \im(N_\lambda ^{d-1})$ if and only if $\height_{\lambda}(v)\ge d$.\
        \item [(2)]\textbf{ Scale invariance}: For every vector $v\in \K^n$ and $c\in \K$ such that $c\not=0$, we have $$\height_\lambda (cv)=\height_{\lambda}(v).$$
        \item [(3) ] \textbf{Shift property}: If $v\in \K^n/\{0\}$ and $d$ is a positive integer, we have the inequality:
        $$\height_\lambda({N_\lambda}^dv)\ge \height_\lambda (v)+d.$$
        \item [(4)] \textbf{Valuative property}:  Given $v_1, v_2,\ldots v_\ell \in \K^n$, we have
    $$\height_\lambda (v_1+v_2+\ldots v_\ell)
    \ge \min \{\height_\lambda(v_1),\ldots,\height_\lambda(v_{\ell}) \}.$$
    \end{itemize}
\end{pr}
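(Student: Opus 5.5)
All four parts follow from the fact that the images $\im(N_\lambda^{d-1})$, $d\ge 1$, form a decreasing chain of subspaces of $\K^n$. I will prove them in the order (1), (2), (4), (3), each using only this chain and the definition of $\height_\lambda$.

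First I record why the chain is decreasing. Since $N_\lambda^{d}=N_\lambda^{d-1}N_\lambda$, we have $\im(N_\lambda^{d})\subseteq \im(N_\lambda^{d-1})$. Also $N_\lambda$ is nilpotent, so $\im(N_\lambda^{d-1})=0$ for $d$ large. Hence for $v\neq 0$ the set $\{d\ge 1: v\in\im(N_\lambda^{d-1})\}$ is finite. It is nonempty, because $d=1$ gives $\im(N_\lambda^{0})=\K^n$. So the maximum in the definition exists.

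\textbf{Part (1).} Let $v\neq 0$.
\begin{itemize}
\item If $v\in\im(N_\lambda^{d-1})$, then $d$ belongs to the set, so $\height_\lambda(v)\ge d$ by definition of the maximum.
\item Conversely, if $\height_\lambda(v)=h\ge d$, then $v\in\im(N_\lambda^{h-1})\subseteq\im(N_\lambda^{d-1})$ by the nesting.
\end{itemize}
For $v=0$, both sides hold trivially, since $0$ lies in every image and $\height_\lambda(0)=\infty$.

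\textbf{Part (2).} Images are subspaces, so $v\in\im(N_\lambda^{d-1})$ if and only if $cv\in\im(N_\lambda^{d-1})$ for $c\neq 0$. The two defining sets therefore coincide.

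\textbf{Part (4).} Let $m=\min_i\height_\lambda(v_i)$.
\begin{itemize}
\item If $m=\infty$, all $v_i$ are zero and there is nothing to prove.
\item Otherwise, by part (1) each $v_i$ lies in $\im(N_\lambda^{m-1})$. This holds also for any $v_i=0$.
\end{itemize}
Since $\im(N_\lambda^{m-1})$ is a subspace, the sum lies in it too. By part (1) again, the height of the sum is at least $m$; this also covers the case where the sum is $0$.

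\textbf{Part (3).} Let $v\neq 0$ with $h=\height_\lambda(v)$, and write $v=N_\lambda^{h-1}w$. Then $N_\lambda^{d}v=N_\lambda^{h+d-1}w\in\im(N_\lambda^{h+d-1})$. If $N_\lambda^d v\neq 0$, part (1) gives $\height_\lambda(N_\lambda^d v)\ge h+d$. If $N_\lambda^d v=0$, its height is $\infty$ and the inequality is trivial.

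None of these steps is a real obstacle. The only points needing care are that the maximum exists, which uses nilpotency and the nesting of images, and that the convention $\height_\lambda(0)=\infty$ handles the degenerate cases in (3) and (4). Part (1) is the key step, since it turns every later claim into a statement about membership in subspaces.
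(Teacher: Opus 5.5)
Your proof is correct and takes essentially the same route as the paper. Parts (3) and (4) are argued exactly as there: in (3) you write $v=N_\lambda^{h-1}w$, and in (4) you use that $\im(N_\lambda^{m-1})$ is a subspace. For (1) and (2), which the paper calls obvious, you supply the missing justification through the nesting $\im(N_\lambda^{d})\subseteq\im(N_\lambda^{d-1})$ and nilpotency, and you handle the zero-vector cases explicitly via $\height_\lambda(0)=\infty$.
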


\begin{proof}
    Statements (1) and (2) are obvious from the definition.
    \begin{itemize}
    \item[(3)] Let $\height_{\lambda}(v)=\ell$. Notice $v\in \im (N_\lambda^{\ell-1})$.
 Thus, there must exist some vector $w$ satisfying $X^{\ell-1}w=v$. Furthermore,
        $$N_\lambda^d\cdot (N_\lambda^{\ell-1}w)=N_\lambda^{d+\ell-1}w=N_\lambda^dv$$
        which implies ${N_\lambda}^dv \in \im({N_\lambda}^{d+\ell-1})$. Therefore, $\height_{\lambda}({N_\lambda}^dv)\ge d+\ell =\height_{\lambda}(v)+d$.

        \item[(4)] Let $d:=\min \{\height_{\lambda}(v_1),\ldots,\height_{\lambda}(v_{\ell}) \}$, we have $v_1,v_2,\ldots,v_{\ell} \in \im({N_\lambda}^{d-1})$.
    So, $v_1+v_2+\ldots v_\ell \in \im({N_\lambda}^{d-1})$ and $\height_{\lambda}(v_1+v_2+\ldots v_\ell)\ge d$.
    \end{itemize}
\end{proof}

\begin{pr}\label{pdefheight}
    If $v\in \im (N_\lambda)$, there exists a unique vector 
    $\tilde{v}\in \K^n$ such that ${N_\lambda}\tilde{v}=v$ and $\tilde{v}$ is orthogonal to $\ker(N_\lambda)$.
\end{pr}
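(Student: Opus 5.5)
The plan is the standard orthogonal-decomposition argument from linear algebra, with existence and uniqueness handled separately. Throughout, I take orthogonality with respect to the standard bilinear form in which the Jordan basis $(e_{ij})_\lambda$ is orthonormal. This form is nondegenerate over both $\CC$ and $\FFq$, because the $e_{ij}$ form a basis whose Gram matrix is the identity.

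\emph{Existence.} Since $v\in\im(N_\lambda)$, there is some $w$ with $N_\lambda w=v$. The kernel $\ker(N_\lambda)$ is spanned by the basis vectors $e_{i1}$, $i\in[k]$. Write
$$w=\sum_{i,j} c_{ij}e_{ij}, \qquad \tilde v:=w-\sum_i c_{i1}e_{i1}.$$
Then $\tilde v$ differs from $w$ by an element of $\ker(N_\lambda)$, so $N_\lambda\tilde v=N_\lambda w=v$. Moreover, $\tilde v$ lies in the span of $\{e_{ij}: j\ge 2\}$. Each of these basis vectors pairs to zero with every $e_{i'1}$, so $\tilde v$ is orthogonal to $\ker(N_\lambda)$.

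\emph{Uniqueness.} Suppose $\tilde v$ and $\tilde v'$ both satisfy the two conditions. Then $u:=\tilde v-\tilde v'$ lies in $\ker(N_\lambda)$, since $N_\lambda u=v-v=0$. At the same time, $u$ is orthogonal to $\ker(N_\lambda)$. So $u$ lies in $\ker(N_\lambda)\cap\ker(N_\lambda)^{\perp}$.

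The only point needing care is showing that this intersection is $\{0\}$ over a finite field, where a general subspace can meet its own orthogonal complement. I avoid that issue by working in coordinates. Write $u=\sum_i a_i e_{i1}$. For each $i$, pairing $u$ with $e_{i1}\in\ker(N_\lambda)$ gives $a_i=0$. Hence $u=0$, so $\tilde v=\tilde v'$. The reason this works is that $\ker(N_\lambda)$ is spanned by a subset of an orthonormal basis.
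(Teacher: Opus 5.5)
Your proof is correct and follows essentially the same argument as the paper. For existence, the paper builds $\tilde v=\sum c_{ij}e_{i(j+1)}$ directly from the coordinates of $v$, whereas you take an arbitrary preimage and delete its $e_{i1}$-components. Your uniqueness argument is the paper's, and your coordinate check that $\ker(N_\lambda)\cap\ker(N_\lambda)^{\perp}=\{0\}$ over $\FFq$ makes explicit a step the paper leaves implicit.
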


\begin{proof}
We can write $v$ as $v=\sum_{i,j}c_{ij}e_{ij}$, $i\in [k]$, and $j\in[\lambda_i]$.
 Since $v\in \im({N_\lambda})$, we have $c_{ij}=0$ when $j=\lambda_i$. 
Let $\tilde{v}=\sum_{i,j}c_{ij}e_{i(j+1)}$. Then, ${N_\lambda}\tilde{v}=v$, and $\tilde{v}$ 
is orthogonal to $\ker({N_\lambda})$.

To show uniqueness, assume there exists another vector $w\in\K^n$ such that ${N_\lambda}w=v$ and $w$ are orthogonal to $\ker(N_\lambda)$. Then, $N_\lambda(\tilde{v}-w)=v-v=0$, which implies $\tilde{v}-w\in\ker (N_\lambda)$. However, since both $\tilde{v}$ and $w$ are orthogonal to $\ker({N_\lambda})$, $\tilde{v}-w$ must be orthogonal to $\ker({N_\lambda})$ as well. Hence, $\tilde{v}-w$ must be $0$ and $w=\tilde{v}$.
\end{proof}

\begin{ex}
    Let $\lambda=(4,3,2)$ and $v=e_{11}+2e_{12}+3e_{21}$, 
    then we have $\tilde{v}= e_{12}+2e_{13}+3e_{22}$. Also, notice that
    $\height_{\lambda}(v)=3$ and  $\height_{\lambda}(\tilde{v})=2$. 
\end{ex}

\begin{pr}\label{PRVtilde}
Given $v \in \im (N_\lambda)$, let $\tilde{v}$ be the unique vector in $\K^n$ such that ${N_\lambda}\tilde{v}=v$ and $\tilde{v}$ is orthogonal to $\ker(N_\lambda)$. We have the following.
\begin{itemize}
    \item[(1)]  $\height_{\lambda}(\tilde v)=\height_{\lambda}(v)-1$.

    \item[(2)] For every vector $w\in \K^n$, $w$ satisfies that $N_\lambda w=v$ if and only if $w$ 
has the orthogonal decomposition $w=\tilde{v} +u$ where $u\in\ker({N_\lambda})$. 

\item[(3)] For every vector $w\in \K^n$ that has the orthogonal decomposition $w=\tilde{v} +u$ where $u\in\ker({N_\lambda})$, we have $\height_{\lambda}(w)=\min \{\height_{\lambda}(v)-1,\height_{\lambda}(u)\}.$
\end{itemize}

\end{pr}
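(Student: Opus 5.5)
We work throughout in the Jordan basis $(e_{ij})$, where everything is explicit. The key is a coordinate description of height: for $v=\sum_{i,j}c_{ij}e_{ij}\neq 0$, the image $\im(N_\lambda^{d-1})$ is spanned by those $e_{ij}$ with $j\le \lambda_i-(d-1)$. Hence
$$\height_\lambda(v)=\min\{\lambda_i-j+1 : c_{ij}\neq 0\}.$$
We will verify this first, since it follows directly from $N_\lambda^{d-1}e_{i,j+d-1}=e_{ij}$. All three parts then become statements about supports of coefficient vectors.

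For (1), write $v=\sum c_{ij}e_{ij}$ with $c_{i\lambda_i}=0$. By the construction in Proposition \ref{pdefheight}, $\tilde v=\sum c_{ij}e_{i(j+1)}$. The support of $\tilde v$ is the support of $v$ shifted by one in the second index. Each term $\lambda_i-j+1$ in the minimum therefore drops by exactly $1$, so $\height_\lambda(\tilde v)=\height_\lambda(v)-1$. If $v=0$, both sides are $\infty$ under the convention. We may restrict to $v\neq 0$ if needed.

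For (2), suppose $N_\lambda w=v$. Then $N_\lambda(w-\tilde v)=0$, so $u:=w-\tilde v\in\ker(N_\lambda)$. The decomposition $w=\tilde v+u$ is orthogonal because $\tilde v\perp\ker N_\lambda$. Conversely, if $w=\tilde v+u$ with $u\in\ker N_\lambda$, then $N_\lambda w=v$.

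For (3), the key observation is that $\ker N_\lambda=\SPAN\{e_{i1}\}$. Meanwhile, $\tilde v$ is supported on basis vectors $e_{ij}$ with $j\ge 2$. Therefore the supports of $\tilde v$ and $u$ are disjoint, and the support of $w$ is their union. The minimum formula then gives
$$\height_\lambda(w)=\min\{\height_\lambda(\tilde v),\height_\lambda(u)\}=\min\{\height_\lambda(v)-1,\height_\lambda(u)\},$$
using (1). The cases $u=0$ or $v=0$ are handled by the $\infty$ convention.

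The main point to state carefully is the support/minimum formula for height, and in particular the disjointness of supports in (3). Without disjointness, the valuative property of Proposition \ref{PRHEIGHT} gives only an inequality, and cancellation could in principle raise the height. Disjointness, which comes from orthogonality in the Jordan basis, rules this out.
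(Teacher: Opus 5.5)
Your proof is correct, but it takes a different route from the paper, most visibly in part (3). You first prove the closed formula $\height_\lambda(v)=\min\{\lambda_i-j+1 : c_{ij}\neq 0\}$ and then reduce everything to statements about supports. Shifting the support of $v$ by one in the second index gives (1). The supports of $\tilde v$ (which has no $e_{i1}$ components) and of $u\in\ker(N_\lambda)=\SPAN\{e_{i1}\}$ are disjoint, and that gives (3).

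The paper never writes down this formula. For (1), it gets $\height_\lambda(\tilde v)\le\height_\lambda(v)-1$ from the shift property. It gets the reverse inequality by exhibiting the explicit lift $y=\sum c_{ij}e_{i(j+d-1)}$ with $N_\lambda^{d-2}y=\tilde v$. For (3), it argues purely formally in three steps. The shift property gives $\height_\lambda(w)\le\height_\lambda(N_\lambda w)-1=\height_\lambda(\tilde v)$. The valuative property applied to $u=w-\tilde v$ then yields $\height_\lambda(u)\ge\height_\lambda(w)$. The valuative property applied to $w=\tilde v+u$ gives the opposite inequality.

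Your approach is more transparent, and the formula would also make later statements such as Lemma \ref{LMIMKER} and Corollary \ref{LemKerH} immediate. The paper's argument for (3) is basis-free and proves slightly more. It uses nothing about $\tilde v$ except $\height_\lambda(\tilde v)=\height_\lambda(v)-1$, so it holds for any preimage of $v$ of that maximal height, not only the orthogonal one.

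In particular, the cancellation you worry about at the end is already excluded for a formal reason. The shift bound $\height_\lambda(w)\le\height_\lambda(v)-1$ suffices to upgrade the valuative inequality to an equality. So disjointness of supports is a sufficient ingredient, but not a necessary one.
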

\begin{proof}
 The existence of vector $\tilde{v}$ follows from Proposition \ref{pdefheight}. To show (1), by the shift property, 
  we have $\height_{\lambda}(v)=\height_{\lambda}({N_\lambda}\tilde{v})\ge \height_{\lambda}(\tilde{v})+1$.
   On the other hand, let $d=\height_{\lambda}(v)$, and note that the vector $y:=\sum_{i,j}c_{ij}e_{i(j+d-1)}$ 
satisfies ${N_\lambda}^{d-1}y=v$ and also 
  ${N_\lambda}^{d-2}y=\tilde{v}$. 
  So, we also have $\height_{\lambda}(\tilde{v})\ge d-1$. Therefore, $\height_{\lambda}(\tilde v)=\height_{\lambda}(v)-1$.

To show (2), if $w=\tilde{v} +u$ for some $u\in\ker({N_\lambda})$, then ${N_\lambda}w={N_\lambda}\tilde{v} +N_\lambda u=v$. 
    On the other hand, if ${N_\lambda}w=v$, then we have ${N_\lambda}w={N_\lambda}\tilde{v}$, which implies
    $w-\tilde{v}\in \ker({N_\lambda})$. So, there must exist some $u\in\ker({N_\lambda})$ such that $w=\tilde{v} +u$.

    To show (3),  For every $w=\tilde{v}+u$, by (1),
     we have $\height_{\lambda}(\tilde{v})= \height_{\lambda}(v)-1$. 
    So, it suffices to prove that
    $\height_{\lambda}(w)=\min \{\height_{\lambda}(\tilde{v}),\height_{\lambda}(u)\}$. Notice $\height_{\lambda}(v)\ge \min \{\height_{\lambda}(\tilde{v}),\height_{\lambda}(u)\}$ follows from the valuative property of height. So, it suffices to show $\height_{\lambda}(w)\le \height_{\lambda}(\tilde{v})$ and $\height_{\lambda}(w)\le \height_{\lambda}(u)$. Notice,
 by (1) and the shift property of height,
    we have
    $$
  \height_{\lambda}(\tilde{v})=\height_{\lambda}(v)-1 = \height_{\lambda}({N_\lambda}w)-1 \ge \height_{\lambda}(w).
    $$
    And since $u=w-\tilde{v}$, by the valuative property again, we have
    $$
    \height_{\lambda}(u) \ge \min \{\height_{\lambda}(w),\height_{\lambda}(-\tilde{v})\}=
    \min \{\height_{\lambda}(w),\height_{\lambda}(\tilde{v})\}=\height_{\lambda}(w).
    $$
    Therefore, we have $\height_{\lambda}(w)\le \height_{\lambda}(\tilde{v})$ and  $\height_{\lambda}(w)\le \height_{\lambda}(u)$. Hence
    $\height_{\lambda}(w)=\min \{\height_{\lambda}(v)-1,\height_{\lambda}(u)\}$.
\end{proof}

For the rest of this subsection, we let $\ell$ be a positive integer.
\begin{lem}\label{LMIMKER}
    The vector space
    $\im (N_\lambda ^{\ell-1})\cap \ker (N_\lambda)$
    has dimension $\lambda_{\ell}^*$. When $\K=\FFq$, this space contains $q^{\lambda_\ell^*}$ vectors.
\end{lem}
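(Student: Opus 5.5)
The plan is to compute both spaces explicitly in the Jordan basis $(e_{ij})_\lambda$ and intersect them. First, $\ker(N_\lambda)$ is spanned by the vectors $e_{i1}$ for $i\in[k]$. This follows from the action $N_\lambda e_{ij}=e_{i(j-1)}$ for $j>1$ and $N_\lambda e_{i1}=0$: the map is injective on the span of the remaining basis vectors, and different Jordan blocks do not interact.

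Next, I will show that $\im(N_\lambda^{\ell-1})$ is spanned by the basis vectors $e_{ij}$ with $j\le \lambda_i-\ell+1$. Iterating the action gives $N_\lambda^{\ell-1}e_{ij}=e_{i(j-\ell+1)}$ when $j\ge \ell$, and $0$ otherwise. The image of a basis therefore spans the image, and that image is exactly the span of the $e_{im}$ with $1\le m\le \lambda_i-\ell+1$, for each block $i$ with $\lambda_i\ge \ell$.

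Both subspaces are spanned by subsets of the same basis. Their intersection is therefore spanned by the intersection of the two subsets, namely the vectors $e_{i1}$ such that $1\le \lambda_i-\ell+1$, i.e. $\lambda_i\ge \ell$. The number of such $i$ is the number of parts of $\lambda$ that are at least $\ell$. By the remark following the definition of the transpose partition, this number is $\lambda^*_\ell$, so the dimension is $\lambda^*_\ell$.

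For $\K=\FFq$, a $\lambda^*_\ell$-dimensional vector space over $\FFq$ has exactly $q^{\lambda^*_\ell}$ elements.

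The only step requiring care is the claim that an intersection of two coordinate subspaces (spans of subsets of a fixed basis) is the span of the common basis vectors. This holds because a vector lies in the span of a subset $S$ of the basis exactly when its coordinates vanish outside $S$. There is no real obstacle beyond keeping the index shift $j\mapsto j-\ell+1$ straight.
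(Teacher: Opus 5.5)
Your proof is correct and follows essentially the same route as the paper: both work in the Jordan basis $(e_{ij})_\lambda$ and identify $\im(N_\lambda^{\ell-1})\cap\ker(N_\lambda)$ as the span of the $e_{i1}$ with $\lambda_i\ge \ell$, whose number is $\lambda^*_\ell$. Your version is slightly more careful than the paper's write-up. You explicitly describe $\im(N_\lambda^{\ell-1})$ as a coordinate subspace, which the paper's membership check for the $e_{i1}$ tacitly needs. You also state the inequality in the correct direction, whereas the paper's text has $\lambda_i\le\ell$ where $\lambda_i\ge\ell$ is meant.
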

\begin{proof}
    For every $i\in[\ell(\lambda)]$, we have $e_{i1}\in \im(N_\lambda^{\lambda_i-1})$ while $e_{i1}\not\in \im(N_\lambda^{\lambda_i})$. Therefore, $e_{i1}\in \im (N_\lambda^{\ell-1})$ if and only if $\lambda_i\le \ell$. By the definition of transpose partition, we also have $\lambda_i\le \ell$ if and only if $i\ge \lambda_i^*$.

       Therefore, the vector space $\im (N_\lambda ^{\ell-1})\cap \ker (N_\lambda)$ is spanned by all $e_{i1}$ such that $i\le \lambda^*_{\ell}$. Hence, we have 
$$\dim \big( \im(N_\lambda ^{\ell-1})\cap \ker (N_\lambda) \big)=\lambda^*_{\ell}.$$ 
When $\K=\FFq$, every vector space of dimension $\lambda^*_{\ell}$ has $q^{\lambda^*_{\ell}}$  vectors. So, $ \im(N_\lambda ^{\ell-1})\cap \ker (N_\lambda)$  has $q^{\lambda^*_{\ell}}$  vectors.
\end{proof}

\begin{cor}
\label{LemKerH}
     For every positive integer $\ell$, the set $$\{u\in \ker (N_\lambda): \height_\lambda(u) =\ell \}$$
    is not empty if and only if $\ell$ satisfies $\lambda^*_{\ell}>\lambda^*_{\ell+1}$. When $\K=\FFq$,  this set contains $q^{\lambda^* _{\ell}}-q^{\lambda^*_{\ell+1}}$  vectors.
\end{cor}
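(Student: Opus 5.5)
The plan is to describe the set $\{u\in \ker(N_\lambda): \height_\lambda(u)=\ell\}$ as a difference of two nested subspaces, and then count using Lemma \ref{LMIMKER}.

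\textbf{Step 1: the set as a difference of subspaces.} For each positive integer $d$, put $V_d:=\im(N_\lambda^{d-1})\cap \ker(N_\lambda)$. By Proposition \ref{PRHEIGHT}(1), a vector $u\in\ker(N_\lambda)$ satisfies $\height_\lambda(u)\ge d$ if and only if $u\in V_d$. Since $\im(N_\lambda^{\ell})\subseteq \im(N_\lambda^{\ell-1})$, we have $V_{\ell+1}\subseteq V_\ell$. Hence a nonzero $u\in\ker(N_\lambda)$ has height exactly $\ell$ if and only if $u\in V_\ell\setminus V_{\ell+1}$.

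The zero vector needs a separate check. It has height $\infty$ and lies in both $V_\ell$ and $V_{\ell+1}$, so it is automatically excluded from $V_\ell\setminus V_{\ell+1}$. Therefore
$$\{u\in \ker (N_\lambda): \height_\lambda(u) =\ell \}=V_\ell\setminus V_{\ell+1}.$$

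\textbf{Step 2: nonemptiness.} By Lemma \ref{LMIMKER}, $\dim V_\ell=\lambda^*_\ell$ and $\dim V_{\ell+1}=\lambda^*_{\ell+1}$. A subspace differs from a subspace it contains exactly when its dimension is strictly larger. So $V_\ell\setminus V_{\ell+1}$ is nonempty if and only if $\lambda^*_\ell>\lambda^*_{\ell+1}$. Since $\lambda^*$ is weakly decreasing, this is the same as $\lambda^*_\ell\neq\lambda^*_{\ell+1}$.

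\textbf{Step 3: counting over $\FFq$.} When $\K=\FFq$, Lemma \ref{LMIMKER} gives $|V_\ell|=q^{\lambda^*_\ell}$ and $|V_{\ell+1}|=q^{\lambda^*_{\ell+1}}$. Because $V_{\ell+1}\subseteq V_\ell$, the difference has $q^{\lambda^*_\ell}-q^{\lambda^*_{\ell+1}}$ elements.

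\textbf{Main obstacle.} There is essentially no difficulty. The only point needing care is the one handled in Step 1: the zero vector must not be counted, and the convention $\height_\lambda(0)=\infty$ together with $0\in V_{\ell+1}$ takes care of it.
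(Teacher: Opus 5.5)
Your proof is correct and follows essentially the same route as the paper: both identify the set with $\bigl(\im(N_\lambda^{\ell-1})\cap\ker(N_\lambda)\bigr)\setminus\bigl(\im(N_\lambda^{\ell})\cap\ker(N_\lambda)\bigr)$ and then read off nonemptiness and the $\FFq$-count from the dimensions in Lemma \ref{LMIMKER}. Your explicit check that the zero vector is excluded is a small detail the paper leaves implicit.
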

\begin{proof}
By the definition of height, $\height_\lambda(u)=\ell$ if and only if $u\in \im(N_\lambda^{\ell-1})\cap \ker(N_\lambda)$ while $u\not\in \im(N_\lambda^{\ell})\cap \ker(N_\lambda)$. Since 
$
 \im(N_\lambda ^{\ell})\cap \ker (N_\lambda) $ is a subspace of $ \im(N_\lambda ^{\ell-1})\cap \ker (N_\lambda)$
, we can find such $u$ if and only if 
$$\lambda^*_{\ell}=\dim \big( \im(N_\lambda ^{\ell-1})\cap \ker (N_\lambda) \big) > \dim \big( \im(N_\lambda ^{\ell})\cap \ker (N_\lambda) \big)=\lambda^*_{\ell+1}.$$
When $\K=\FFq$, the number of vectors $u$ such that $u\in \im(N_\lambda ^{\ell-1})\cap \ker (N_\lambda)$ while $u\not\in\im(N_\lambda ^{\ell})\cap \ker (N_\lambda)$ is $q^{\lambda_\ell^*}-q^{\lambda_{\ell+1}^*}$.
\end{proof}

\begin{ex}
    Suppose $\lambda=(4,2,2,1)$, then $\lambda^*=(4,3,1,1)$. The set $$\{u\in \ker (N_\lambda): \height_\lambda(u) =\ell \}$$ is not empty if and only if $\ell$ is one of $1,2$ or $4$.

    When $\K=\FFq$, we have that:
    \begin{itemize}
        \item  If $\ell=1$,  this set contains $q^{\lambda^*_1}-q^{\lambda^*_2}=q^4-q^3$  vectors.

        \item  If $\ell=2$, this set contains $q^{\lambda^*_2}-q^{\lambda^*_3}=q^3-q$  vectors.

        \item  If $\ell=4$, this set contains $q^{\lambda^*_3}-q^{\lambda^*_4}=q-1$  vectors.
    \end{itemize}
\end{ex}

In the next lemma, we let $d$ be a positive integer.
\begin{lem}\label{LemNKerH1}
    For every nonzero vector $v\in \im (N_\lambda)$ such that $\height_\lambda(v)=d$, the set
    $$
    \{w \in \K^n: N_\lambda(w)=v \text{ and } \height_\lambda (w)=\ell\}
    $$
 is not empty if and only if
\begin{itemize}
    \item $\ell=d-1$, or
    \item $\ell<d-1$ while $\lambda^*_{\ell}>\lambda^*_{\ell+1}.$
\end{itemize}
When $\K=\FFq$, the number of vectors in this set is $$\begin{cases}
     \lambda^*_\ell,\quad \text{ if  }\ell=d-1,\\
     \lambda_{\ell}^*-  \lambda_{\ell+1}^* ,\quad \text{ if  }\ell<d-1.
 \end{cases}$$
\end{lem}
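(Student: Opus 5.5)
The plan is to reduce everything to Proposition \ref{PRVtilde} and then count with Lemma \ref{LMIMKER} and Corollary \ref{LemKerH}. Since $v\in\im(N_\lambda)$ is nonzero, Proposition \ref{pdefheight} gives the distinguished preimage $\tilde v$ orthogonal to $\ker(N_\lambda)$. By Proposition \ref{PRVtilde}(2), the solutions of $N_\lambda w=v$ are exactly the vectors $w=\tilde v+u$ with $u\in\ker(N_\lambda)$, and $u\mapsto \tilde v+u$ is a bijection from $\ker(N_\lambda)$ onto this solution set. By Proposition \ref{PRVtilde}(3),
\[
\height_\lambda(w)=\min\{d-1,\ \height_\lambda(u)\},
\]
with the convention $\height_\lambda(0)=\infty$. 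So the set in the statement is in bijection with $\{u\in\ker(N_\lambda):\min\{d-1,\height_\lambda(u)\}=\ell\}$.

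We then split into three cases according to $\ell$.
\begin{itemize}
\item If $\ell>d-1$, the minimum can never equal $\ell$, so the set is empty.
\item If $\ell=d-1$, the condition is $\height_\lambda(u)\ge d-1$. By Proposition \ref{PRHEIGHT}(1) (and allowing $u=0$), this means $u\in\im(N_\lambda^{d-2})\cap\ker(N_\lambda)$. This subspace always contains $0$, so the set is nonempty. By Lemma \ref{LMIMKER} it has dimension $\lambda^*_{d-1}$, so over $\FFq$ it contributes $q^{\lambda^*_{\ell}}$ vectors.
\item If $\ell<d-1$, the condition is exactly $\height_\lambda(u)=\ell$ with $u\in\ker(N_\lambda)$ (now $u\neq 0$). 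By Corollary \ref{LemKerH}, such $u$ exists if and only if $\lambda^*_\ell>\lambda^*_{\ell+1}$, and over $\FFq$ there are $q^{\lambda^*_\ell}-q^{\lambda^*_{\ell+1}}$ of them.
\end{itemize}

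There is one point to flag in the write-up. The counts obtained this way are the powers $q^{\lambda^*_\ell}$ and $q^{\lambda^*_\ell}-q^{\lambda^*_{\ell+1}}$, not the bare numbers $\lambda^*_\ell$ and $\lambda^*_\ell-\lambda^*_{\ell+1}$ as written in the statement. I would therefore read the displayed formula as the $q$-counts, which is what the two cited results give.

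The only delicate step is the boundary case $\ell=d-1$. There one must check two things: that $u=0$ is allowed, since it gives $w=\tilde v$ of height $d-1$ by Proposition \ref{PRVtilde}(1); and that Proposition \ref{PRVtilde}(3) remains valid when $u=0$ under the convention $\height_\lambda(0)=\infty$. With those in place, the rest is routine bookkeeping.
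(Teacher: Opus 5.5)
Your proposal is correct and follows the paper's proof essentially step for step: you write $w=\tilde v+u$ with $u\in\ker(N_\lambda)$, use $\height_\lambda(w)=\min\{d-1,\height_\lambda(u)\}$, and then count via Lemma \ref{LMIMKER} when $\ell=d-1$ and via Corollary \ref{LemKerH} when $\ell<d-1$. You are also right to read the displayed counts as $q^{\lambda^*_\ell}$ and $q^{\lambda^*_\ell}-q^{\lambda^*_{\ell+1}}$: that is what the paper's proof actually derives and what Proposition \ref{prop:counting_lambda_v_mu} later uses, so the bare numbers in the statement are a typo.
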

\begin{proof}
    If there doesn't exist any vector $v$ such that $\height(v)=d$, this statement holds trivially. Otherwise, for every $w$ such that $N_\lambda(w)=v$, by Proposition \ref{PRVtilde}, we can write $w=\tilde{v}+u$ for a unique $u\in \ker(N_\lambda)$ and
$$\height_{\lambda}(w)=\min \{d-1,\height_{\lambda}(u)\}.$$
Therefore, the set $\{w: N_\lambda(w)=v \text{ and } \height_\lambda (w)=\ell\}$ is not empty only if $\ell \le d-1$.

If $\ell=d-1$, $\height_{\lambda}(w)=\ell$ if and only if $w=\tilde{v}+u$ and $\height_{\lambda}(u) \ge d -1=\ell$, which is equivalent to $u \in \im (N_\lambda ^{\ell-1})\cap \ker (N_\lambda)$. 
By Lemma \ref{LMIMKER}, such $u$ always exists, and when $\K=\FFq$, there are exactly $q^{\lambda^* _{\ell}}$  such vector $u$, which correspond to the  $w$ such that $N_\lambda(w)=v$ and $\height_{\lambda}(w)=d-1$.

If $\ell<d-1$, then  $\height_{\lambda}(w)=\ell$ if and only if $w=\tilde{v}+u$ and $\height_{\lambda}(u) =\ell$. By Corollary \ref{LemKerH}, the set 
$$\{u\in \ker (N_\lambda): \height_\lambda(u) =\ell \}$$ is not empty if and only if $\lambda_\ell^*>\lambda_{\ell+1}^*$. Therefore, there exists some vector $w$ such that $N_\lambda(w)=v$ and $\height_\lambda (w)=\ell$ if and only if $\lambda_\ell^*>\lambda_{\ell+1}^*$. When $\K=\FFq$, there are exactly $q^{\lambda^* _{\ell}}-q^{\lambda^* _{\ell+1}}$  choices for $u$, which correspond to the  choices for $w$.
\end{proof}

\begin{ex}
    Suppose $\lambda=(5,2)$ and $v=e_{11}$. We have $\lambda^*=(2,2,1,1,1)$ and $\height_\lambda(e_{11})=5$. The set 
    $$
    \bigl\{w: N_\lambda(w)=e_{11} \text{ and } \height_\lambda (w)=\ell\bigr\}
    $$ is not empty if and only if $\ell$ is $2$ or $4$ since $\height_\lambda(e_{12})-1=4$, and $\lambda_1^*=\lambda_2^*<\lambda_3^*=\lambda_4^*$.
    When $\K=\FFq$, we have that:
    \begin{itemize}
        \item  If $\ell=4$, this set contains $q^{\lambda_4^*}=q$  vectors. 
        \item  If $\ell=2$, this set contains $q^{\lambda_2^*}-q^{\lambda_3^*}=q^2-q$  vectors.  
        \end{itemize}
\end{ex}

\begin{ex}
    Suppose $\lambda=(6,4,2,1)$ and $v=e_{12}$. We have $\lambda^*=(4,3,2,2,1,1)$ and $\height_\lambda(e_{12})=5$. The set 
    $$
    \bigl\{w: N_\lambda(w)=e_{12} \text{ and } \height_\lambda (w)=\ell\bigr\}
    $$ is not empty if and only if $\ell$ is $4,2$ or $1$ since $\height_\lambda(e_{12})-1=4$, and $\lambda_1^*<\lambda_2^*<\lambda_3^*=\lambda_4^*$.

When $\K=\FFq$, we have that:
    \begin{itemize}
        \item  If $\ell=4$, this set contains $q^{\lambda_4^*}=q^2$  vectors. 
        \item  If $\ell=2$, this set contains $q^{\lambda_2^*}-q^{\lambda_3^*}=q^3-q^2$  vectors. 
         \item  If $\ell=1$, this set contains $q^{\lambda_1^*}-q^{\lambda_2^*}=q^4-q^3$  vectors.  
        \end{itemize}
\end{ex}

\subsection{Cyclic subspaces}

\begin{define}
Let $v\in \K^n$. We call the subspace
$$
\langle v\rangle_{\lambda}:=\operatorname{span}\left\{v, {N_\lambda}v, {N_\lambda}^2v, \ldots, {N_\lambda}^iv, \ldots\right\}
$$
the \textbf{${\lambda}$-cyclic subspace} generated by $v$.   
\end{define}

\begin{pr} \label{prop:dim_of_cycle_subspace}
    The dimension of $\langle v\rangle_\lambda$ equals the smallest integer $d$ such that $v\in \ker({N_\lambda}^d)$.
\end{pr}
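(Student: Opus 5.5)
Let $d$ be the smallest nonnegative integer with $N_\lambda^d v=0$. It exists because $N_\lambda$ is nilpotent. If $v=0$ then $d=0$ and $\langle v\rangle_\lambda=\{0\}$, so there is nothing to prove. Assume from now on that $v\neq 0$, so $d\ge 1$.

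The plan is to show that $v, N_\lambda v,\ldots,N_\lambda^{d-1}v$ form a basis of $\langle v\rangle_\lambda$.

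\textbf{Spanning.} For every $i\ge d$ we have $N_\lambda^i v=N_\lambda^{i-d}(N_\lambda^d v)=0$. Hence
$$\langle v\rangle_\lambda=\SPAN\{v,N_\lambda v,\ldots,N_\lambda^{d-1}v\},$$
and therefore $\dim\langle v\rangle_\lambda\le d$.

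\textbf{Linear independence.} This is the only step with any content. Suppose
$$\sum_{i=0}^{d-1}c_iN_\lambda^i v=0$$
with not all $c_i$ equal to zero, and let $m$ be the smallest index with $c_m\neq 0$. Apply $N_\lambda^{d-1-m}$ to the relation. Each term with $i>m$ becomes $N_\lambda^{d-1-m+i}v$, whose exponent is at least $d$, so it vanishes. The relation therefore reduces to $c_mN_\lambda^{d-1}v=0$. By minimality of $d$ we have $N_\lambda^{d-1}v\neq 0$, so $c_m=0$, which is a contradiction.

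Thus the $d$ vectors are independent and $\dim\langle v\rangle_\lambda=d$, as claimed. The argument works over any field $\K$, in particular over both $\CC$ and $\FFq$.
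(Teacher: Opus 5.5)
Your proof is correct and uses the same argument as the paper: both show that $v, N_\lambda v,\ldots,N_\lambda^{d-1}v$ are linearly independent by applying a suitable power of $N_\lambda$ to a dependence relation, so that only a multiple of $N_\lambda^{d-1}v\neq 0$ survives. The differences are cosmetic: you take the smallest index with a nonzero coefficient, whereas the paper eliminates $c_1, c_2,\ldots$ one at a time. You also state explicitly the spanning step and that $d$ is the nilpotency index of $v$ (rather than, a priori, the dimension), which the paper leaves implicit.
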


\begin{proof}
Let $d$ be the dimension of $\langle v\rangle_\lambda$. If $v=0$, this statement is true since the dimension is equal to $0$. 
Otherwise, by the assumption,
 we have $$v_1:=v,\ v_2:={N_\lambda}v,\ \ldots,\ v_i:={N_\lambda}^{i-1}v,\ \ldots,\  v_{d}:={N_\lambda}^{d-1}v,$$ all being nonzero vectors; it suffices 
    to prove that they are 
    linearly independent. Suppose there exists $c_1,c_2,\ldots,c_d \in \K$ such that
    $$
\sum_{i=1}^d c_iv_i=c_1v_1+c_2v_2 +\ldots c_iv_i+\ldots c_dv_d =0.
    $$
    By multiplying ${N_\lambda}^{d-1}$ to both sides, we have $c_1{N_\lambda}^{d-1}v_1=c_1v_d=0$. So, we must have $c_1=0$.
    Then, by multiplying ${N_\lambda}^{d-2}$ to both sides, we have $c_2{N_\lambda}^{d-2}v_2=c_2v_d=0$. So, we must have $c_2=0$. Continuing
     this process, we have $c_1=c_2=c_3=\ldots=c_d=0$.
\end{proof}

\begin{lem}\label{lem:cyclic}
    For every vector $w\in \langle v\rangle_\lambda$, we have $\height_{\lambda}(w) \ge \height_{\lambda}(v)$.
\end{lem}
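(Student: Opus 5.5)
The plan is to write an arbitrary element of the cyclic subspace as a linear combination of iterates of $v$ and then apply the basic properties of height from Proposition \ref{PRHEIGHT}. Since $N_\lambda$ is nilpotent, $\langle v\rangle_\lambda$ is spanned by the finitely many vectors $v, N_\lambda v, \ldots, N_\lambda^{m}v$ for some $m\ge 0$. So every $w\in\langle v\rangle_\lambda$ can be written as
$$w=\sum_{i=0}^{m} c_i N_\lambda^{i} v, \qquad c_i\in\K.$$

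First, dispose of the degenerate cases. If $v=0$, then $\langle v\rangle_\lambda=\{0\}$, so $w=0$ and $\height_\lambda(w)=\infty=\height_\lambda(v)$. If $w=0$, the inequality holds trivially because $\height_\lambda(0)=\infty$. So assume from now on that $v\neq 0$.

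Next, bound the height of each summand $c_iN_\lambda^i v$ from below by $\height_\lambda(v)$:
\begin{itemize}
\item If the summand is zero, its height is $\infty$ by convention.
\item If $i=0$ and $c_0\neq 0$, scale invariance (Proposition \ref{PRHEIGHT}(2)) gives height exactly $\height_\lambda(v)$.
\item If $i\ge 1$ and $c_iN_\lambda^i v\neq 0$, the shift property (Proposition \ref{PRHEIGHT}(3)) together with scale invariance gives $\height_\lambda(c_iN_\lambda^i v)\ge \height_\lambda(v)+i\ge\height_\lambda(v)$.
\end{itemize}

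Finally, apply the valuative property (Proposition \ref{PRHEIGHT}(4)) to the sum. This gives $\height_\lambda(w)\ge\min_i \height_\lambda(c_iN_\lambda^iv)\ge \height_\lambda(v)$.

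An alternative, more direct argument uses Proposition \ref{PRHEIGHT}(1). If $d=\height_\lambda(v)$, then $v\in\im(N_\lambda^{d-1})$. This subspace is $N_\lambda$-stable, because $N_\lambda$ commutes with its own powers. Hence it contains all of $\langle v\rangle_\lambda$, and so every $w$ in $\langle v\rangle_\lambda$ has height at least $d$.

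There is no real obstacle in this argument. The only care needed is with the conventions for the zero vector: the shift property is stated only for nonzero $v$, and its proof implicitly assumes finite height. Treating zero summands separately, as above, handles both points.
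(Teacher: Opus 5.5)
Your proof is correct and follows the paper's argument. Like the paper, you expand $w$ as a linear combination of the iterates $N_\lambda^{i}v$ and then combine scale invariance, the shift property, and the valuative property from Proposition \ref{PRHEIGHT}; your separate handling of zero summands and of $v=0$ is a bit more careful than the paper's, and your alternative argument via the $N_\lambda$-stability of $\im(N_\lambda^{d-1})$ is also valid.
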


\begin{proof}
   If $w=0$, this statement holds trivially. Otherwise, let $d$ be the dimension of $\langle v\rangle_\lambda$. Since $w\in \langle v\rangle _\lambda$, we can write 
   $$w=\sum_{i=1}^d c_iN_\lambda^{i-1}v=c_1v+c_2{N_\lambda}v +\ldots c_i{N_\lambda}^{i-1}v+\ldots + c_d{N_\lambda}^{d-1}v$$
   for some coefficients $c_1,c_2,\ldots,c_n$ that cannot all be zero.
   By the valuative property and scale invariance, we have
   $$
   \height_{\lambda}(w) \ge \min \{\height_{\lambda}(c_iN_\lambda^{i-1}v):c_i\not=0\}=\min \{\height_{\lambda}(N_\lambda^{i-1}v):c_i\not=0\}.
   $$
    And by the shift property, we have $\height_{\lambda}({N_\lambda}^{i-1}v)\ge \height_{\lambda}(v)$ for all $1 \le i\le d$. Therefore, we have
    $$\height_{\lambda}(w) \ge \min\{\height_{\lambda}({N_\lambda}^{i-1}v:c_i \not=0\}\ge \height_{\lambda}(v)$$
which is the desired inequality.
\end{proof}

\subsection{Jordan Chains}
\begin{define}
    Given $u\in\ker({N}_\lambda)$, a $\lambda $-\textbf{Jordan chain} containing $u$ is a set 
    \begin{equation}\label{EQLA1}
        \{w,\quad {N}_\lambda w,\quad {N}_\lambda^2 w,\quad \ldots,
\quad {N}_\lambda^{d-2}w,
\quad {N}_\lambda^{d-1}w=u\},
    \end{equation}
    where $w$ is a vector in $\K^n$ that satisfies ${N}_\lambda^{d-1}w=u$ for some integer $d$. We call the number of elements in this set the \textbf{length} of the chain. 
\end{define}
\begin{remark}\label{RMJDC}
    By the shift property, the elements in a $\lambda$-Jordan chain satisfy
    \begin{align*}
        \height_\lambda (w) < \height_\lambda ({N}_\lambda w) < \height_\lambda( {N}_\lambda^2 w) <\ldots <
\height_\lambda( {N}_\lambda^{d-2}w)<
\height_\lambda( {N}_\lambda^{d-1}w)=\height_\lambda(u).
    \end{align*}
\end{remark}

\begin{define}
    We call a $\lambda$-Jordan chain containing $u$ \textbf{maximal} if there is no other $\lambda$-Jordan chain containing $u$ that has a larger length.
\end{define}

By the definition of height, we have the following lemma.
\begin{lem}
    A $\lambda$-Jordan chain containing $u$ is maximal if and only if its length equals $\height_\lambda(u)$.
\end{lem}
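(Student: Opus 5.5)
The statement asserts two inequalities in both directions on lengths, and the plan is to prove them separately. Fix $u\in\ker(N_\lambda)$, $u\neq 0$, and write $d:=\height_\lambda(u)$.

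First I will show that every $\lambda$-Jordan chain containing $u$ has length at most $d$. Take a chain
$$\{w, N_\lambda w,\ldots, N_\lambda^{m-1}w=u\}$$
of length $m$. Then $u=N_\lambda^{m-1}w\in\im(N_\lambda^{m-1})$, so Proposition \ref{PRHEIGHT}(1) gives $\height_\lambda(u)\ge m$. Alternatively, Remark \ref{RMJDC} gives $m$ strictly increasing heights ending at $\height_\lambda(u)$; since every height of a nonzero vector is at least $1$, this again forces $m\le d$. One small point must be checked: the elements of the chain are distinct and nonzero. This holds because $u\neq0$ and the heights are strictly increasing, so the chain really has $m$ elements.

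Next I will show that a chain of length exactly $d$ exists. By definition of height, $u\in\im(N_\lambda^{d-1})$, so there is some $w$ with $N_\lambda^{d-1}w=u$. This $w$ produces a chain of length $d$. To make this explicit, I would iterate Proposition \ref{pdefheight} and Proposition \ref{PRVtilde}(1), taking successive preimages $\tilde u,\tilde{\tilde u},\ldots$, each of which lowers the height by exactly one, until height $1$ is reached after $d-1$ steps. Combining the two directions, the maximum length of a chain is exactly $d$.

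It follows that a chain is maximal if and only if its length equals $\height_\lambda(u)$. The only real subtlety is the degenerate case $u=0$, where $\height_\lambda(0)=\infty$. Either the statement implicitly assumes $u\neq0$, or in that case every length is achievable and no maximal chain exists, so the lemma is vacuous or false. I would state the assumption $u\neq0$ explicitly. Beyond this, I expect no real obstacle.
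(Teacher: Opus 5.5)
Your argument is correct and is exactly the unpacking of the paper's one-line justification (``by the definition of height''): a chain of length $m$ ending at $u\neq 0$ forces $u\in\im(N_\lambda^{m-1})$, hence $m\le \height_\lambda(u)$. Conversely, $u\in\im(N_\lambda^{d-1})$ with $d=\height_\lambda(u)$ directly supplies a chain of length $d$, so the iterated-preimage construction is optional.

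Restricting to $u\neq 0$ is the right fix, but your description of that case is slightly off. Since a chain is defined as a set, its length for $u=0$ is at most $\lambda_1+1$. So maximal chains do exist there, and the statement is genuinely false rather than vacuous.
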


The readers can find the following result in standard textbooks; cf \cite{weintraub2009jordan}.

\begin{pr}
    Every maximal $\lambda$-Jordan chain can be extended to become a Jordan basis of ${N}_\lambda$.
\end{pr}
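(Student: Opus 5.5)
We want to show: given a maximal $\lambda$-Jordan chain $\{w, N_\lambda w,\ldots,N_\lambda^{d-1}w=u\}$ with $d=\height_\lambda(u)$, there is a Jordan basis of $N_\lambda$ containing it. The strategy is to reduce to the standard Jordan basis $(e_{ij})_\lambda$ by a change of basis that commutes with $N_\lambda$, i.e. an automorphism $g\in GL_n(\K)$ with $gN_\lambda=N_\lambda g$. If we can find such $g$ with $g(e_{i_0 d})=w$ for some row $i_0$ with $\lambda_{i_0}=d$, then $g(e_{i_0 j})=g(N_\lambda^{d-j}e_{i_0d})=N_\lambda^{d-j}w$. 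So $\{g(e_{ij})\}$ is a Jordan basis of $N_\lambda$ containing the chain.

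The first step is to pin down the heights. Since the chain is maximal, $\height_\lambda(u)=d$, so $u\in\ker N_\lambda\cap\im N_\lambda^{d-1}$ but $u\notin\im N_\lambda^{d}$. By Remark \ref{RMJDC} the heights strictly increase by at least one at each step. Starting from $\height_\lambda(w)\ge 1$ and reaching $\height_\lambda(u)=d$ in $d-1$ steps forces $\height_\lambda(N_\lambda^{k}w)=k+1$ exactly; in particular $\height_\lambda(w)=1$. Also $N_\lambda^{d}w=0$ and $N_\lambda^{d-1}w\neq 0$.

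The second step is to show that some $\lambda_{i_0}$ equals $d$. Write $u=\sum_i c_i e_{i1}$. Membership in $\im N_\lambda^{d-1}$ forces $c_i=0$ whenever $\lambda_i<d$, by the proof of Lemma \ref{LMIMKER}. Non-membership in $\im N_\lambda^{d}$ forces $c_{i}\ne0$ for some $i$ with $\lambda_i=d$ exactly. Choose such an $i_0$.

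The third step, which I expect to be the main obstacle, is constructing $g$. I would define $g$ on generators: $g(e_{i\lambda_i})=e_{i\lambda_i}$ for $i\neq i_0$, and $g(e_{i_0 d})=w$. Then extend by $g(e_{ij})=N_\lambda^{\lambda_i-j}g(e_{i\lambda_i})$. This commutes with $N_\lambda$ provided $N_\lambda^{\lambda_i}g(e_{i\lambda_i})=0$ for all $i$, which holds since $N_\lambda^dw=0$. It remains to check that $g$ is invertible. By commutation, it suffices that $g$ restricted to $\ker N_\lambda$ is injective (a nilpotent-equivariant map injective on the socle is injective). On $\ker N_\lambda$, $g$ sends $e_{i1}\mapsto e_{i1}$ for $i\ne i_0$ and $e_{i_01}\mapsto u=\sum c_ie_{i1}$ with $c_{i_0}\ne0$. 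This is a unitriangular-type (elementary) matrix with nonzero pivot, hence invertible.

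Finally, the image basis contains $g(e_{i_0 j})=N_\lambda^{d-j}w$ for $j=1,\dots,d$, which is exactly the given chain. This completes the extension.
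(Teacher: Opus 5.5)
Your proof is correct. The paper does not actually prove this statement; it only cites a standard textbook, so your argument is a genuinely different, self-contained route.

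The usual textbook argument works abstractly. It shows that the cyclic subspace $\langle w\rangle_\lambda$ has an $N_\lambda$-invariant complement, and then takes a Jordan basis of that complement. Maximality, i.e.\ $u\notin\im N_\lambda^{d}$, is exactly the purity condition needed for the complement to exist.

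You instead write down an explicit automorphism $g$ that commutes with $N_\lambda$ and carries the standard block $e_{i_01},\dots,e_{i_0d}$ onto the given chain. The argument has two substantive inputs, and you verify both correctly:
\begin{itemize}
\item maximality forces $u$ to have a nonzero coefficient $c_{i_0}$ on some $e_{i_01}$ with $\lambda_{i_0}=d$;
\item the socle criterion: an $N_\lambda$-equivariant map that is injective on $\ker N_\lambda$ is injective.
\end{itemize}

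Your version has the advantage of using only the paper's height and coordinate computations, and it works over any field. The complement argument is more conceptual and extends to finitely generated torsion modules over a PID.

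One minor remark: Step 1 is not needed. Beyond maximality, the rest of the proof only uses $N_\lambda^{d}w=0$ and $N_\lambda^{d-1}w=u\neq 0$.
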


\begin{ex}
    If $\lambda=(4,2)$ and $v= e_{11}$. The following is a $\lambda$-Jordan chain containing $e_{11}$:
     $$
\{ e_{14}+2e_{13},\quad  e_{13}+2e_{12},\quad  e_{12}+2e_{11},\quad  e_{11}\},
    $$
    which has length $4$ which equals $ht_\lambda(e_{11})$, so it is maximal. We can extend it to
    $$
\{ e_{14}+2e_{13},\quad  e_{13}+2e_{12},\quad  e_{12}+2e_{11},\quad  e_{11},\quad e_{21},\quad e_{22}\},
    $$
    which is a Jordan basis for $N_\lambda$.
\end{ex}

\subsection{The Jordan type of a \texorpdfstring{$N_\lambda$}{Nlambda} on quotient space}
\begin{define}
   For every nonzero vector $v\in \K^n$, we let $\lambda(v)$ be the partition of $n-\dim \langle v\rangle _\lambda$ determined by the Jordan type of ${N}_\lambda$ 
    restricted to $\K^n/\langle v \rangle_\lambda$.
\end{define}

\begin{ex}
    If $\lambda=(3,2)$ and $v=e_{12}$, then $\langle v\rangle_\lambda = \operatorname{span}\{e_{12},e_{11}\}$. The nilpotent operator $N_\lambda$ restricted to $\K^5/\operatorname{span}\{e_{12},e_{11}\}$ has the Jordan basis $
    \{e_{13},e_{21},e_{22}\}.
    $ Therefore, we have $\lambda(v)=(2,1)$.
\end{ex}

\begin{ex}
    If $\lambda=(4,2)$ and $v=e_{12}+e_{21}$, then $\langle v \rangle_\lambda = \operatorname{span}\{e_{12}+e_{21},e_{11}\}$. The nilpotent operator $N_\lambda$ restricted to $\K^5/\operatorname{span}\{e_{12}+e_{21},e_{11}\}$ has Jordan basis $\{e_{14},e_{13},e_{12},e_{22}\}$ and Jordan type $(3,1)$. Therefore, $\lambda(v)=(3,1)$. Next, $N_\lambda v=e_{11}$ and $\lambda(N_\lambda v)=(3,2)$. Finally, $\lambda(N_\lambda ^2v)=(4,2)$ since $N_\lambda^2v=0$.
\end{ex}

Notice in the last example, we have the Young diagrams 
$$\lambda(v):\ydiagram {3,1}\quad \quad \quad \lambda({N}_\lambda v):\ydiagram {3,2}\quad \quad \quad \lambda({N}_\lambda^2v):\ydiagram {4,2}$$
where $\lambda(v)$ and $\lambda({N}_\lambda v)$ differ by exactly one box in column $2$
, which equals $\height_\lambda(v)$; also,
 $\lambda({N}_\lambda v)$ and $\lambda({N}_\lambda^2v)$ differ by exactly one box in column $4$,
 which equals $\height_\lambda(Xv)$. We will show that this is not a coincidence in the next proposition. We need one lemma first:

 \begin{lem}\label{lem:strip}
    If $v$ is a nonzero vector in $ \ker({N}_\lambda)$, the Young diagrams of $\lambda(v)$ and $\lambda$ differ by exactly one box, 
and one can obtain the diagram of $\lambda(v)$
by deleting a box at the end of column $\height_\lambda(v)$ from the diagram of $\lambda$.
 \end{lem}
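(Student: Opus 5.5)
Let $h:=\height_\lambda(v)$. Since $v\in\ker(N_\lambda)$ is nonzero, $\langle v\rangle_\lambda=\SPAN\{v\}$ is one-dimensional by Proposition~\ref{prop:dim_of_cycle_subspace}, so $\lambda(v)$ is a partition of $n-1$. The plan is to choose a Jordan basis of $N_\lambda$ adapted to $v$ and then read off the Jordan type of the quotient directly.

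First we build the adapted basis. By the definition of height there is a $w$ with $N_\lambda^{h-1}w=v$. Then
\[
\{w,\ N_\lambda w,\ \ldots,\ N_\lambda^{h-1}w=v\}
\]
is a $\lambda$-Jordan chain containing $v$ of length $\height_\lambda(v)$, so it is maximal. By the proposition on extending maximal chains, it extends to a Jordan basis of $N_\lambda$. In that basis, $v$ is the bottom vector of a Jordan block of size exactly $h$. Hence $\lambda$ has a part equal to $h$, consistent with Corollary~\ref{LemKerH}.

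Next we compute the quotient. Write $\K^n=C\oplus C'$, where $C$ is the span of this chain and $C'$ is the span of the remaining basis vectors. Both are $N_\lambda$-stable, and $N_\lambda$ acts on $C'$ with Jordan type equal to $\lambda$ with one part $h$ removed. Modding out by $\SPAN\{v\}\subset C$ gives $\K^n/\langle v\rangle_\lambda\cong C/\SPAN\{v\}\oplus C'$. On $C/\SPAN\{v\}$ the images of $w,\ldots,N_\lambda^{h-2}w$ form a single Jordan chain of length $h-1$. Therefore $\lambda(v)$ is obtained from $\lambda$ by replacing one part equal to $h$ with $h-1$ (when $h=1$, by deleting it).

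Finally we translate this into the Young diagram. Replacing one part equal to $h$ by $h-1$ removes a single box in column $h$. To keep the rows weakly decreasing, we take this to be the last row of length $h$, so the box removed is the lowest box of column $h$, that is, the end of column $\height_\lambda(v)$. Equivalently, $\lambda(v)^*_h=\lambda^*_h-1$ and all other columns are unchanged. The same conclusion can be cross-checked with Lemma~\ref{LMIMKER}: the quotient's $\ker\cap\im$ dimensions drop by one exactly at index $h$.

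The main obstacle is justifying that the Jordan basis can be chosen with $v$ itself as a chain bottom, rather than just some vector of height $h$. The extension proposition cited from standard textbooks handles this. If more care is needed, one can verify directly that the complement $C'$ can be chosen $N_\lambda$-stable, because the chain has maximal length $\height_\lambda(v)$.
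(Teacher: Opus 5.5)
Your proposal is correct and follows the paper's proof. Both arguments take a Jordan chain $\{w,\dots,N_\lambda^{h-1}w=v\}$ of length $\height_\lambda(v)$, extend it to a Jordan basis via the cited extension result, and read off the Jordan type of the quotient from the images of that basis. Your splitting $\K^n=C\oplus C'$ and the column count $\lambda(v)^*_h=\lambda^*_h-1$ just spell out the step the paper compresses into one line.
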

 \begin{proof}
    Let $d=\height_\lambda(v)$. Since $v\in \ker({N}_\lambda)$, $v$ is an eigenvector of ${N}_\lambda$.
     By the definition of height, every maximal Jordan chain containing $v$ must have
     length 
    $d$. Suppose one of them is in the following form:
    $$
\{w,\quad {N}_\lambda w,\quad {N}_\lambda^2w,\quad \ldots , \quad  {N}_\lambda^{d-1}w=v\}.
    $$
Then it can be expanded to become a new Jordan basis of ${N}_\lambda$. If we restrict ${N}Tolambda$
to the quotient space $\K ^n/\langle v\rangle _\lambda$, the image of this Jordan basis in the quotient
becomes a Jordan basis of $\K ^n/\langle v\rangle _\lambda$ for the operator ${N}_\lambda|_{\K ^n/\langle v\rangle_\lambda}$. Therefore, the
Young diagrams of $\lambda(v)$ and $\lambda$ differ by exactly one box in column $d$.
\end{proof}

\begin{pr}\label{prop:strip}
For every nonzero vector $v\in\K^n$, the Young diagrams of $\lambda(v)$ and $\lambda (N_\lambda v)$ differ by exactly one box, 
and we obtain the diagram of $\lambda(v)$
by deleting a box at the end of column $\height_\lambda(v)$ from the diagram of $\lambda(N_\lambda v)$.
\end{pr}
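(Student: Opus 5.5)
The plan is to reduce Proposition \ref{prop:strip} to Lemma \ref{lem:strip} by passing to the quotient by $\langle N_\lambda v\rangle_\lambda$.

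Set $W:=\langle N_\lambda v\rangle_\lambda$ and $V':=\K^n/W$, and let $\mu:=\lambda(N_\lambda v)$ be the Jordan type of the induced operator $\bar N$ on $V'$. If $N_\lambda v=0$, then $W=0$ and $\mu=\lambda$, and the claim is exactly Lemma \ref{lem:strip}. So assume $N_\lambda v\neq 0$. Since $W\subset\langle v\rangle_\lambda$ has codimension one, by Proposition \ref{prop:dim_of_cycle_subspace}, we get
$$\K^n/\langle v\rangle_\lambda\cong V'/\langle \bar v\rangle_{\mu},$$
where $\bar v$ is the image of $v$. Moreover $\bar N\bar v=\overline{N_\lambda v}=0$, so $\bar v$ is a nonzero vector of $\ker\bar N$; it is nonzero because $v\notin W$ by dimension count. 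Applying Lemma \ref{lem:strip} to $\bar N$ on $V'$ (after choosing a Jordan basis, so $\bar N\cong N_\mu$), the diagram of $\lambda(v)$ is obtained from $\mu$ by deleting a box at the end of column $\height_\mu(\bar v)$. Here $\height_\mu(\bar v)$ is measured in $V'$, i.e. $\height_\mu(\bar v)=\max\{d:\bar v\in \im \bar N^{d-1}\}$.

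The main step is then to show $\height_\mu(\bar v)=\height_\lambda(v)$. Write $h=\height_\lambda(v)$. For the inequality $\ge$: if $v=N_\lambda^{h-1}y$, then $\bar v=\bar N^{h-1}\bar y$, so $\height_\mu(\bar v)\ge h$.

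For the inequality $\le$, suppose $\bar v\in\im\bar N^{d-1}$, i.e. $v=N_\lambda^{d-1}y+w$ with $w\in W$. Then $w=\sum_{i\ge1}c_iN_\lambda^i v$, so
$$(I-\textstyle\sum_{i\ge1} c_iN_\lambda^i)\,v=N_\lambda^{d-1}y.$$
The operator $U:=I-\sum_{i\ge1} c_iN_\lambda^i$ is unipotent, hence invertible, and it commutes with $N_\lambda$. Therefore
$$v=U^{-1}N_\lambda^{d-1}y=N_\lambda^{d-1}U^{-1}y\in\im N_\lambda^{d-1},$$
giving $d\le h$. This invertibility trick, which removes the $W$-correction, is the only nonroutine point; I expect it to be the obstacle, and I would try it first as described.

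Finally, the statement asserts deletion "from the diagram of $\lambda(N_\lambda v)$". Since $\mu=\lambda(N_\lambda v)$ by definition and $|\lambda(v)|=|\mu|-1$, the two diagrams differ by exactly one box, located in column $\height_\lambda(v)$. This completes the plan.
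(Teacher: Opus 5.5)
Your proof is correct and follows the paper's argument. Like the paper, you pass to $\K^n/\langle N_\lambda v\rangle_\lambda$, use the third isomorphism theorem to get $\lambda(v)=\mu(\bar v)$ with $\bar v\in\ker N_\mu$, apply Lemma \ref{lem:strip}, and prove $\height_\mu(\bar v)=\height_\lambda(v)$ by lifting for the inequality $\ge$. The only difference is in the $\le$ direction: the paper argues by contradiction that the correction term $u\in\langle N_\lambda v\rangle_\lambda$ has height at least $\height_\lambda(v)+1$, via Lemma \ref{lem:cyclic} and the shift property, whereas your unipotent operator $U=I-\sum_{i\ge1}c_iN_\lambda^i$, which commutes with $N_\lambda$, absorbs the correction directly, an equally valid and slightly more self-contained step.
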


\begin{proof}
The case that $v\in \ker({N}_\lambda)$ follows from Lemma \ref{lem:strip}. So it suffices to prove the statement for 
$v\not\in\ker({N}_\lambda)$.

Let $\overline{{N}_\lambda}={N}_\lambda|_W$, where $W=\K^n/\langle {N}_\lambda v\rangle_\lambda$. We can choose a Jordan basis for this operator so that we can identify $\overline{{N}_\lambda}$ with $N_\mu$, where $\mu=\lambda(N_\lambda v)$.
Now, let $\overline{v}$ be the image of $v$ under this quotient. By the third isomorphism theorem for
    modules, we have
\begin{equation*}
    \K^n/\langle  v\rangle_\lambda \cong  
   \big( \K^n/ \langle {N}_\lambda v\rangle_\lambda  \big)\ /\  \left( \langle  v\rangle_\lambda / \langle {N}_\lambda v\rangle_\lambda \right) \cong W/(\langle  \overline{v} \rangle_\mu)
\end{equation*}
So, we have $\lambda(v)=\mu(\overline{v})$.
And since $\overline{v}\in\ker(N_\mu)$, 
by Lemma \ref{lem:strip}, the diagrams of $\mu(\overline{v})$ and $\mu$ differ by only one box
in column $\height_{\mu}(\overline{v})$. 
Therefore, it suffices to prove that $\height_{\mu}(\overline{v})=\height_{\lambda}(v)$.

For simplicity, we let $\height_\lambda(v)=d$. Then, there must exist some vector $w$
such that ${N}_\lambda^{d-1}w=v$. If we let $\overline{w}$ 
be the image of $w$ under the quotient map, 
we will have 
$N_{\mu}^{d-1}\overline{w}=\overline{v}$. 
So $\height_{\mu}(\overline{v})\ge d$.

Assume that, for the sake of contradiction, $\height_{\mu}(\overline{v})> d$; then there exists some vector
$\overline{y}$ such that $N_\mu^{d}\overline{y}=\overline{v}$. 
Let $y$ be a vector such that its image under the quotient is $\overline{y}$.
 Then, we have ${N}_\lambda^d y=v+u$ for some $u\in \langle N_\lambda v\rangle_\lambda$ and have $v+u\in \im({N}_\lambda^{d})$. By Lemma
 \ref{lem:cyclic} and the shift property, we have $\height_\lambda (u)\ge \height_\lambda ({N}_\lambda v) \ge \height_\lambda (v)+1=d+1$
, which implies $u\in \im ({N}_\lambda^d)$. Hence, we also have $v=(v+u)-u\in \im ({N}_\lambda^{d})$, which contradicts the fact that $\height_\lambda (v)=d$.

Therefore, we have $\height_{\mu}(\overline{v})=\height_{\lambda}(v)$, and $\height_\lambda(v)$ is the unique
column index at which the diagrams of $\lambda(v)$ and $\lambda({N}_\lambda v)$ differ.
\end{proof}

\begin{cor}\label{cor:box_delete}
    For every nonzero vector $v\in \K^n$, we can obtain the Young diagram of $\lambda(v)$ by deleting a box at the end of the columns 
    $$\height_\lambda(v) < \height_\lambda(N_\lambda v) < \height_\lambda(N_\lambda^2 v)< \ldots$$ from the diagram of $\lambda$.
\end{cor}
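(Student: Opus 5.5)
We argue by induction along the chain $v, N_\lambda v, N_\lambda^2 v, \ldots$, applying Proposition \ref{prop:strip} once per link. Let $d=\dim\langle v\rangle_\lambda$. By Proposition \ref{prop:dim_of_cycle_subspace}, the vectors $v, N_\lambda v, \ldots, N_\lambda^{d-1}v$ are nonzero and $N_\lambda^d v=0$.

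\textbf{Base case.} Since $\langle N_\lambda^{d}v\rangle_\lambda=\langle 0\rangle_\lambda=0$, the quotient $\K^n/\langle 0\rangle_\lambda$ is all of $\K^n$. With the natural convention $\lambda(0)=\lambda$, this reads $\lambda(N_\lambda^d v)=\lambda$. Lemma \ref{lem:strip} is consistent with this convention: it treats the case $v\in\ker(N_\lambda)$ by comparing $\lambda(v)$ directly with $\lambda$.

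\textbf{Inductive step.} For $j=d-1, d-2,\ldots,0$, apply Proposition \ref{prop:strip} to the nonzero vector $N_\lambda^j v$. It says that the diagram of $\lambda(N_\lambda^j v)$ is obtained from that of $\lambda(N_\lambda^{j+1}v)$ by deleting a box at the end of column $\height_\lambda(N_\lambda^j v)$. For $j=d-1$ we have $N_\lambda^{d-1}v\in\ker(N_\lambda)$, and this step is exactly Lemma \ref{lem:strip}.

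\textbf{Conclusion.} Composing these $d$ steps shows that $\lambda(v)$ is obtained from $\lambda$ by successively deleting boxes at the ends of the columns
$$\height_\lambda(N_\lambda^{d-1}v),\ \height_\lambda(N_\lambda^{d-2}v),\ \ldots,\ \height_\lambda(v).$$
This is in decreasing column order when read in order of application, and equivalently the set of columns listed in the statement. By the shift property, Proposition \ref{PRHEIGHT}(3) (see also Remark \ref{RMJDC}), these heights are strictly increasing in the exponent:
$$\height_\lambda(v)<\height_\lambda(N_\lambda v)<\cdots<\height_\lambda(N_\lambda^{d-1}v).$$
So the deleted boxes lie in distinct columns. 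The list in the statement is effectively finite, since it stops once $N_\lambda^k v=0$, where the height is $\infty$ by convention.

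\textbf{Main obstacle.} The only point needing care is that deleting boxes one at a time is well defined in any order: each intermediate diagram must be a genuine partition, so that ``the end of column $c$'' makes sense at every stage. This is guaranteed because each intermediate diagram is itself $\lambda(N_\lambda^j v)$, the Jordan type of an actual operator. Moreover, since the columns are distinct, the final set of removed boxes is one box in each of these columns. Hence $\lambda/\lambda(v)$ is a horizontal strip with $I_{\lambda/\lambda(v)}=\{\height_\lambda(N_\lambda^j v): 0\le j<d\}$, a reformulation that is presumably what later sections use.
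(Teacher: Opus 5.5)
Your proof is correct and is essentially the paper's argument: apply Proposition \ref{prop:strip} repeatedly along $v, N_\lambda v, N_\lambda^2 v,\ldots$ until the chain reaches $\ker(N_\lambda)$, where Lemma \ref{lem:strip} compares directly with $\lambda$. You also make explicit two points the paper leaves implicit, namely the convention $\lambda(0)=\lambda$ and the strict increase of the heights. One small wording point: knowing that each intermediate diagram equals some $\lambda(N_\lambda^j v)$ justifies the deletions only in the order you actually perform them (largest column first), not ``in any order'' (deleting left to right can pass through non-partition shapes), but this does not affect the conclusion.
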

\begin{proof}
    By Proposition \ref{prop:strip}, we can obtain the diagram of $\lambda( v)$ by deleting a box at the end of column $\height_\lambda(v)$ from the diagram of $\lambda(N_\lambda v)$. Then, by Proposition \ref{prop:strip} again, we can obtain the diagram of $\lambda( v)$ by deleting a box at the end of columns $\height_\lambda(v)$ and  $\height_\lambda(N_\lambda v)$ from the diagram of $\lambda(N_\lambda^2 v)$. Continuing this process, we can finally obtain the Young diagrams of $\lambda(v)$ by deleting a box at the end of the columns 
    $\height_\lambda(v),\ \height_\lambda(N_\lambda v), \height_\lambda(N_\lambda^2 v)\ldots$ from the Young diagram of $\lambda$.
\end{proof}

Below, we let $\lambda$ be a partition of $n$.
\begin{pr}\label{PRLAHST}
    For every partition $\mu$, the set
    $$
    \{v\in \K^n:v\not=0, \lambda(v)=\mu\}
    $$
    is not empty if and only if $\lambda/\mu$ is a horizontal strip or, in other words, $\mu \in \LAM(\lambda)$.
\end{pr}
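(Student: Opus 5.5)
The statement has two directions, and both come from Corollary \ref{cor:box_delete}. That corollary says that for nonzero $v$, the diagram of $\lambda(v)$ is obtained from that of $\lambda$ by deleting one box at the end of each of the columns
$$\height_\lambda(v)<\height_\lambda(N_\lambda v)<\cdots<\height_\lambda(N_\lambda^{d-1}v),$$
where $d=\dim\langle v\rangle_\lambda$. The strictness of these inequalities (Remark \ref{RMJDC}) carries the whole argument.

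\emph{Necessity.} Suppose $v\neq 0$ and $\lambda(v)=\mu$. The boxes removed lie in pairwise distinct columns, so $\lambda^*_j-\mu^*_j\in\{0,1\}$ for every $j$. At least one box is removed because $d\ge 1$, so $\mu\neq\lambda$. This is exactly the condition for $\lambda/\mu$ to be a horizontal strip, i.e.\ $\mu\in\LAM(\lambda)$. I will also note that each intermediate diagram $\lambda(N_\lambda^k v)$ is a genuine partition. This makes the successive deletions legitimate, which the corollary already guarantees.

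\emph{Sufficiency.} Let $\lambda/\mu$ be a horizontal strip and write $I_{\lambda/\mu}=\{c_1<c_2<\cdots<c_d\}$. I need a vector $v$ with $\height_\lambda(N_\lambda^{k}v)=c_{k+1}$ for $0\le k<d$ and $N_\lambda^d v=0$. Because a horizontal strip has at most one box per column, column $c_k$ of $\lambda$ has a box at its bottom that is the last box of some row $r_k$; thus $\lambda_{r_k}=c_k$. The rows $r_k$ are distinct and, since $c_1<\cdots<c_d$, they satisfy $r_1>r_2>\cdots>r_d$. I will define $v$ as a ``staircase'' vector
$$v=\sum_{k=1}^{d} e_{r_k,\,k},$$
which is legitimate because $k\le c_k=\lambda_{r_k}$.

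Applying the Jordan basis action gives
$$N_\lambda^{j}v=\sum_{k>j} e_{r_k,\,k-j},$$
so $N_\lambda^d v=0$ and $\dim\langle v\rangle_\lambda=d$. For the heights, note that $e_{r,m}$ has height $\lambda_r-m+1$ and that distinct basis vectors contribute independently. Hence $\height_\lambda$ of a combination of basis vectors equals the minimum of their heights; this is immediate from the Jordan basis description of $\im(N_\lambda^{h-1})$. Therefore
$$\height_\lambda(N_\lambda^{j}v)=\min_{k>j}\,\bigl(c_k-k+j+1\bigr).$$
This does not equal $c_{j+1}$ in general. It does when $c_k-k$ is weakly increasing, which holds because the $c_k$ are strictly increasing integers. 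So
$$\height_\lambda(N_\lambda^j v)=c_{j+1}-(j+1)+j+1=c_{j+1}.$$
Corollary \ref{cor:box_delete} then removes exactly one box at the end of each column $c_1,\dots,c_d$. The result is $\mu$, since the removed boxes are precisely those of the strip.

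The main obstacle is this height computation for the explicit $v$. The points to check are that height is the minimum over the supporting basis vectors, and that the minimum is attained at $k=j+1$. If the indexing slips, I would instead build $v$ by descending induction using Lemma \ref{LemNKerH1}, which controls the possible heights of preimages. Start with $u\in\ker N_\lambda$ of height $c_d$; Corollary \ref{LemKerH} applies because $\lambda^*_{c_d}>\lambda^*_{c_d+1}$ at the bottom of a removed box's column. Then repeatedly choose preimages of height $c_{k}$, which Lemma \ref{LemNKerH1} permits because $c_k<c_{k+1}-1$ or $c_k=c_{k+1}-1$, and in either case $\lambda^*_{c_k}>\lambda^*_{c_k+1}$ holds by the strip condition.
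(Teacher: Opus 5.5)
Your necessity direction is the same as the paper's. The sufficiency construction, however, rests on a false claim. The strip's box in column $c_k$ need not be the last box of its row, so in general no row $r_k$ satisfies $\lambda_{r_k}=c_k$, and the rows carrying strip boxes need not be distinct. A horizontal strip can have several boxes in one row, and only the rightmost of them ends that row. Two examples:
\begin{itemize}
\item For $\lambda=(3)$, $\mu=(1)$ you have $I_{\lambda/\mu}=\{2,3\}$, both boxes lie in row $1$, and no part of $\lambda$ equals $2$.
\item For $\lambda=(3,1)$, $\mu=(1)$ you get $c=(1,2,3)$, with the boxes in rows $2,1,1$.
\end{itemize}
So $v$ is not well defined as written. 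The formula $\height_\lambda(N_\lambda^jv)=\min_{k>j}(c_k-k+j+1)$ uses $\lambda_{r_k}=c_k$, so it is unjustified.

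The idea can be repaired. Let $r_k$ be the row containing the strip box in column $c_k$. Then the $r_k$ weakly decrease and $k\le c_k\le\lambda_{r_k}$. Indices sharing a row form a run, and along a run the $c_k$ are consecutive. Moreover $\lambda_{r_k}=c_b$, where $b\ge k$ is the last index of the run containing $k$. Hence for $k>j$ you get $\lambda_{r_k}-k\ge c_b-b\ge c_{j+1}-(j+1)$, using that $m\mapsto c_m-m$ is weakly increasing. Equality holds at the end of the run containing $j+1$. So $\height_\lambda(N_\lambda^jv)=c_{j+1}$ does hold for $v=\sum_k e_{r_k,k}$. In the two examples above, $v=e_{11}+e_{12}$ and $v=e_{21}+e_{12}+e_{13}$ work. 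Repaired this way, your route gives an explicit witness, which is genuinely different from, and more concrete than, the paper's induction.

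Your fallback is exactly the paper's proof. It inducts on $|\lambda|-|\mu|$, removes the smallest column of the strip, and lifts through Lemma \ref{LemNKerH1}, with the base case from Corollary \ref{LemKerH}. One statement in it is wrong but harmless. When $c_k=c_{k+1}-1$, the inequality $\lambda^*_{c_k}>\lambda^*_{c_k+1}$ can fail: for $\lambda=(3)$, $\mu=(1)$ one has $\lambda^*_2=\lambda^*_3$. However, Lemma \ref{LemNKerH1} needs no such condition for a preimage whose height is exactly one less. The inequality is only needed when $c_k<c_{k+1}-1$, and there it does hold. In that case the strip box in column $c_k$ must end its row; otherwise column $c_k+1$ would also carry a strip box.
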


\begin{proof}
    For every nonzero vector $v$, let $\mu=\lambda(v)$. Then, by Corollary \ref{cor:box_delete}, $\mu$ can be obtained by deleting one box at the end of the columns with index
    $$\height_\lambda(v), \height_\lambda(N_\lambda v),\height_\lambda(N_\lambda^2 v),\ldots$$
    from the Young diagram of $\lambda$. By Remark \ref{RMJDC}, all of these column indices are distinct from one another. Therefore, $\lambda/\mu$ must be a horizontal strip.

On the other hand, for every partition $\mu$ such that $\lambda/\mu$ is a horizontal strip, we claim that there exists a nonzero vector $v$ such that $\lambda(v)=\mu$. Let $d=|\lambda|-|\mu|$, we prove the claim by induction on $d$.

\textit{Base case:} When $d=1$. Suppose $\ell$ is the unique column index such that ${\lambda _\ell}^* -\mu_\ell^*=1$. 
Notice that we must have $\lambda^* _{\ell} > \lambda^* _{\ell+1}$; 
Otherwise, the diagram of $\mu$, obtained by deleting a box at the end of column $\ell$ from the Young diagram of $\lambda$, cannot be a Young diagram. Then, by Corollary \ref{LemKerH}, there exists some $v$ such that $v\in \ker(N_\lambda)$ and $\height_\lambda (v)=\ell$, which, by Lemma \ref{lem:strip}, means we have $\lambda(v)=\mu$.

\textit{Inductive step:} When $d>1$, suppose that
$$
\ell _1< \quad  \ell_2<\quad  \ldots \ldots \quad < \ell_{d-1}< \quad \ell_d
$$
are the column indices such that $\lambda^*_i-\mu^*_i =1$. 
Let $\nu$ be the partition that satisfies $\lambda^*_i-\nu^*_i =1$ for $i\in \{ \ell_2, \ell_3,\ldots ,\ell_{d} \}$, and $\lambda^*_i=\nu^*_i $ otherwise. 
Then, $\lambda/\nu$ is a horizontal strip, and $|\lambda|-|\nu|=d-1$. 
Inductively, assume that we have already proved that there exists a vector $u$ such that $\lambda(u)=\nu$.
By Corollary \ref{cor:box_delete}, we have $\height_\lambda (N_\lambda^{i-2}u)=\ell_{i}$ for $i \in \{2,3,\ldots,d\}$. 
There are two possible cases,
\begin{itemize}
    \item [(1)] $\ell_1= \ell_{2}-1$. By Lemma \ref{LemNKerH1}, there always exists some $v$ such that $N_\lambda v = u$ and $\height_\lambda (v)= \ell_{2}-1=\ell_1$.

    \item [(2)]  $\ell_1< \ell_{2}-1$. Notice that we must have $\lambda^* _{\ell_1} > \lambda^* _{\ell_1+1}$; 
Otherwise, the diagram of $\mu$, obtained by deleting a box at the end of column $\ell_i$ where $i\in [d]$ from the Young diagram of $\lambda$, cannot be a Young diagram. Then, also by Lemma \ref{LemNKerH1}, we can always choose some $v$ such that $N_\lambda v = u$ and $\height_\lambda (v)= \ell_{1}$.
\end{itemize}
 In both cases, we have $\lambda(v)=\mu$, since $\height_\lambda(v)=\ell_1$, and $\height_\lambda (N_\lambda^{i-1} v)=\height_\lambda (N_\lambda^{i-2}u)=\ell_i$ for $i \in \{2,3,\ldots,d\}$.
\end{proof}

Recall that when $\lambda/\mu$ is a horizontal strip, we define $I_{\lambda/\mu}$ as the set of all column indices $j$ such that $\lambda_j^*-\mu_j^*=1$. Then, by Corollary \ref{cor:box_delete} and the previous proposition, there always exists some vector $v$ such that 
\begin{equation}\label{EQILM}
I_{\lambda/\mu}=\bigl\{\height_\lambda(v) , \height_\lambda(N_\lambda v) , \height_\lambda(N_\lambda^2 v), \ldots\bigr\}.
\end{equation}
\begin{ex}
    If $\lambda=(3,2,1)$ and $\mu=(2,2)$, then $\lambda/\mu$ is a horizontal strip, and $I_{\lambda/\mu}=\{1,3\}$. By Corollary \ref{cor:box_delete}, for every vector $v$, $\lambda(v)=\mu$ if and only if $N_\lambda v \in \ker(N_\lambda)$, $\height_\lambda (v)=1$, and $\height_\lambda (N_\lambda v)=3$. Then, we can let $v$ be any vector of the form $c_{31}e_{31}+c_{21}e_{21}+c_{11}e_{11}+ c_{12}e_{12}$ such that $c_{31}\not =0$, $c_{12}\not=0$, and other coefficients can be any number in $\K$. In this case, we have $\height_\lambda(v)=1$, $\height_\lambda(N_\lambda v)=3$, and $\lambda(v)=(2,2)$.
\end{ex}
In the next proposition, we count how many vectors $v$ there are in $\FFq^n$ for a fixed horizontal strip $\lambda/\mu$ that satisfy the Formula (\ref{EQILM}).
\begin{pr}\label{prop:counting_lambda_v_mu}
 Given partitions $\lambda$ and $\mu$ such that $\lambda/\mu$ is a horizontal strip, we have the equation
 \begin{equation}\label{EQCOUNT}
\# \{v\in \FFq^n: \lambda(v)=\mu\} = \prod_{i\in I_{\lambda/\mu}}g_i(q), 
    \end{equation}
    where $g_i(q):=
        \begin{cases}
            q^{\lambda^*_i}, \text{ if $i+1\in I_{\lambda/\mu}$}\\
            q^{\lambda^*_i}-q^{\lambda^*_{i+1}}, \text{ otherwise.}
        \end{cases}$
\end{pr}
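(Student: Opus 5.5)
We count by building $v$ from the bottom of its Jordan chain upward. By Corollary~\ref{cor:box_delete} and Remark~\ref{RMJDC}, a nonzero $v$ satisfies $\lambda(v)=\mu$ exactly when, writing $I_{\lambda/\mu}=\{\ell_1<\ell_2<\cdots<\ell_d\}$, we have $N_\lambda^{d}v=0$, $N_\lambda^{d-1}v\neq 0$, and
\[
\height_\lambda(N_\lambda^{i-1}v)=\ell_i \quad\text{for all } i\in[d].
\]
So the set in (\ref{EQCOUNT}) is in bijection with sequences $(u_d,u_{d-1},\ldots,u_1)$ with the following properties. First, $u_d\in\ker(N_\lambda)$ with $\height_\lambda(u_d)=\ell_d$. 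Second, for each $i<d$ we have $N_\lambda u_i=u_{i+1}$ and $\height_\lambda(u_i)=\ell_i$. Setting $v=u_1$, the $u_i$ are determined by $v$ via $u_i=N_\lambda^{i-1}v$, so the correspondence is a bijection. We therefore count these sequences one step at a time.

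\textbf{Step 1 (choosing $u_d$).} By Corollary~\ref{LemKerH}, the number of choices for $u_d$ is $q^{\lambda^*_{\ell_d}}-q^{\lambda^*_{\ell_d+1}}$. Since $\ell_d+1\notin I_{\lambda/\mu}$, this equals $g_{\ell_d}(q)$.

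\textbf{Step 2 (choosing $u_i$ given $u_{i+1}$).} Once $u_{i+1}$ is fixed with height $\ell_{i+1}$, we apply Lemma~\ref{LemNKerH1} with $d=\ell_{i+1}$ and $\ell=\ell_i$. There are two cases.
\begin{itemize}
\item If $\ell_i=\ell_{i+1}-1$, i.e.\ $\ell_i+1\in I_{\lambda/\mu}$, the number of choices is $q^{\lambda^*_{\ell_i}}$.
\item Otherwise $\ell_i<\ell_{i+1}-1$, and the number of choices is $q^{\lambda^*_{\ell_i}}-q^{\lambda^*_{\ell_i+1}}$.
\end{itemize}
Note that $\ell_i+1\in I_{\lambda/\mu}$ can only happen through $\ell_{i+1}=\ell_i+1$, so both cases give exactly $g_{\ell_i}(q)$. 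Lemma~\ref{LemNKerH1} as stated drops the $q$-powers in its counts. Its proof, however, counts $u\in\ker N_\lambda$ via Lemma~\ref{LMIMKER} and Corollary~\ref{LemKerH}, so the correct values are these powers of $q$, and I would cite that proof rather than the statement.

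\textbf{Step 3 (multiplying).} The key point, and the step I expect to need the most care, is that the number of choices at each step does not depend on the earlier choices: it depends only on the height $\ell_{i+1}$ of $u_{i+1}$, not on $u_{i+1}$ itself. This holds because Proposition~\ref{PRVtilde} parametrizes all preimages of $u_{i+1}$ as $\tilde u_{i+1}+u$ with $u\in\ker N_\lambda$, and the height condition is imposed on $u$ alone. The count is therefore a product of uniform fiber sizes, giving $\prod_{i\in I_{\lambda/\mu}}g_i(q)$.

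To finish, we confirm that each counted $v$ satisfies $\lambda(v)=\mu$. Corollary~\ref{cor:box_delete} deletes a box at the bottom of each column $\ell_i$, which produces exactly $\mu$. Conversely, every $v$ with $\lambda(v)=\mu$ has $\{\height_\lambda(N_\lambda^{j}v)\}=I_{\lambda/\mu}$, listed in increasing order, so it arises from such a sequence.
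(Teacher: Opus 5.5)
Your proposal is correct and is essentially the paper's argument. The paper inducts on $d=|\lambda|-|\mu|$: it counts $u=N_\lambda v$ with $\lambda(u)=\nu$, where $\nu$ is $\lambda$ with the boxes in columns $\ell_2,\dots,\ell_d$ removed, and then multiplies by the uniform number of preimages of height $\ell_1$ from Lemma~\ref{LemNKerH1}; unrolled, this is exactly your bottom-up chain construction. You are also right that the counts in the statement of Lemma~\ref{LemNKerH1} should read $q^{\lambda^*_\ell}$ and $q^{\lambda^*_\ell}-q^{\lambda^*_{\ell+1}}$, which is what its proof gives and what the paper actually uses here.
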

\begin{proof}
    Let $d=|\lambda|-|\mu|$. We prove the statement using induction on $d$.

    \textit{Base case:} When $d=1$, suppose $I_{\lambda/\mu}=\{\ell\}$. By Corollary \ref{cor:box_delete}, for every nonzero vector $v$, $\lambda(v)=\mu$ if and only if $v\in \ker(N_\lambda)$ and $\height_\lambda(v)=\ell$. And by Corollary \ref{LemKerH}, there are $q^{\lambda^*_\ell}-q^{\lambda^*_{\ell+1}}$  $v\in \ker  (N_\lambda)$ that satisfy $\height_\lambda(v)=\ell$.

\textit{Inductive step:} When $d>1$, suppose that $I_{\lambda,\mu}=\{\ell_1,\ell_2,\ldots,\ell_d\}$ such that
$
\ell _1<  \ell_2< \ldots \  < \ell_{d-1}< \ell_d.
$
 Let $\nu$ be the partition that satisfies $\lambda^*_i-\nu^*_i =1$ for $i\in \{ \ell_2, \ell_3,\ldots ,\ell_{d} \}$, and $\lambda^*_i=\nu^*_i $ otherwise. Then, $\lambda/\nu$ is a horizontal strip, and $|\lambda|-|\nu|=d-1$.

By Corollary \ref{cor:box_delete}, for every vector $v\in \K^n$, $\lambda(v)=\mu$ if and only if $N_\lambda^{d-1}v\in \ker(N_\lambda)$ and
$$
\height_\lambda(v)=\ell_1, \ \height_\lambda(N_\lambda v)=\ell_2,\ \height_\lambda(N_\lambda ^2v)=\ell_{3}, \ldots, \ \height(N_\lambda^{d-1}v) = \ell_d.
$$
And by Corollary \ref{cor:box_delete} again, for every vector $v\in \K^n$, the condition $N_\lambda^{d-1}v\in \ker(N_\lambda)$ and
$$\ \height_\lambda(N_\lambda v)=\ell_2,\ \height_\lambda(N_\lambda ^2v)=\ell_{3}, \ldots, \ \height(N_\lambda^{d-1}v) = \ell_d.
$$
is equivalent to $\lambda(N_\lambda v)=\nu$. Therefore, for every vector $v$, $\lambda(v)=\mu$ if and only if $\height_\lambda(v)=d_1$ and $\lambda(N_\lambda v)=\nu$. By induction, the cardinality of the set 
$$\bigl\{u\in \FFq^n: \lambda(u)=\nu\bigr\}$$
is equal to
\begin{equation}
G(q):=\prod_{i\in I_{\lambda/\nu}}g_i(q), \text{ where }  g_i(q):=
        \begin{cases}
            q^{\lambda^*_i}, \text{ if $i+1\in I_{\lambda/\nu}$}\\
            q^{\lambda^*_i}-q^{\lambda^*_{i+1}}, \text{ otherwise.}
        \end{cases}
\end{equation}
There are two possible cases,
\begin{itemize}
    \item [(1)] $\ell_1= \ell_{2}-1$. By Lemma \ref{LemNKerH1}, for a fixed $u$ in the set $\{u\in \FFq^n: \lambda(u)=\nu\}$, the number of  vectors $v$ such that $N_\lambda v=u$ and $\height_\lambda(v)= \ell_{2}-1=\ell_1$ is equal to $ q^{\lambda^*_{\ell_{1}}}$. Notice that we have $\ell_{2}=\ell_1+1 \in I_{\lambda/\mu}$, the number of  $v$ such that $\lambda(v)=\mu$ is given by the formula
    \begin{equation*}
q^{\lambda^*_{\ell_{1}}}\cdot \# \{u\in \FFq^n: \lambda(u)=\nu\}=
q^{\lambda^*_{\ell_{1}}}\cdot G(q)=  \prod_{i\in I_{\lambda/\mu}}g_i(q) 
\end{equation*}
    where $ g_i(q)=
        \begin{cases}
            q^{\lambda^*_i}, \text{ if $i+1\in I_{\lambda/\mu}$}\\
            q^{\lambda^*_i}-q^{\lambda^*_{i+1}}, \text{ otherwise.}
        \end{cases}$
    
    \item [(2)]  $\ell_1< \ell_{2}-1$. By Lemma \ref{LemNKerH1}, for a fixed $u$ in the set $\{u\in \FFq^n: \lambda(u)=\nu\}$, the number of  vectors $v$ such that $N_\lambda v=u$ and $\height_\lambda(v)= \ell_{1}$ is equal to $ q^{\lambda^*_{\ell_{1}}}-q^{\lambda^*_{\ell_{1}+1}}$. Notice that we have $\ell_1 +1 \not\in I_{\lambda/\mu}$. Thus, the number of  $v$ such that $\lambda(v)=\mu$ is given by the formula
    \begin{equation*}
  (q^{\lambda^*_{\ell_{1}}}-q^{\lambda^*_{\ell_{1}+1}})\cdot \# \{u\in \FFq^n: \lambda(u)=\nu\}  =
(q^{\lambda^*_{\ell_{1}}}-q^{\lambda^*_{\ell_{1}+1}})\cdot G(q)=  \prod_{i\in I_{\lambda/\mu}}g_i(q)
\end{equation*}
where $ g_i(q)=
        \begin{cases}
            q^{\lambda^*_i}, \text{ if $i+1\in I_{\lambda/\mu}$}\\
            q^{\lambda^*_i}-q^{\lambda^*_{i+1}}, \text{ otherwise.}
        \end{cases}$
\end{itemize}
In both cases, Formula (\ref{EQCOUNT})  gives the number of $v\in \FFq^n$ such that $\lambda(v)=\mu$.
\end{proof}

\begin{ex}\label{ex221}
Let $\lambda=(3,2,1)$ and $\mu =(2,1)$. Then, $\lambda^*=(3,2,1)$ and $\lambda/\mu$ is the following horizontal strip (gray part).
 $$\begin{ytableau}
{}  & {} & *(gray){} \\
{}  & *(gray){}\\ 
*(gray){}
\end{ytableau}$$
Therefore, $I_{(3,2,1)/(2,1)}=\{1,2,3\}$ and by Proposition \ref{prop:counting_lambda_v_mu}, we have
    $$
    \# \{v\in \FFq^6: \lambda(v)=(2,1)\} 
    =q^3\cdot q^2 \cdot (q-1)=q^5(q-1).$$
\end{ex}

\begin{ex}\label{ex222}
Let $\lambda=(3,2,1)$ and $ \mu =(2,2)$. Then, $\lambda^*=(3,2,1)$ and $\lambda/\mu$ is the following horizontal strip (gray part).
$$ \begin{ytableau}
{}  & {} & *(gray){} \\
{}  & {}\\ 
*(gray){}
\end{ytableau}$$
Therefore, $I_{(3,2,1)/(2,2)}=\{1,3\}$  and by Proposition \ref{prop:counting_lambda_v_mu}, we have 
    $$ \# \{v\in \FFq^6: \lambda(v)=(2,2)\} =
    (q^3- q^2) \cdot (q-1)=q^2(q-1)^2.$$
\end{ex}

\begin{cor}\label{cor:leading_term_card}
    Let $\lambda$ and $\mu$ be partitions such that $|\lambda|=n$, and $\lambda/\mu$ is a horizontal strip. Then, the cardinality of the set
    $$
    \{v\in \FFq^n: \lambda(v)=\mu\} $$
    is a polynomial in $q$ whose leading term is $q^{\sum_{i\in [\ell(\lambda)]}i(\lambda_i-\mu_i)}$, and it is always divisible by $q-1$.
\end{cor}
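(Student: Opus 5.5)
The plan is to read both claims directly off the product formula in Proposition \ref{prop:counting_lambda_v_mu},
$$\#\{v\in \FFq^n:\lambda(v)=\mu\}=\prod_{i\in I_{\lambda/\mu}} g_i(q).$$
Each factor $g_i(q)$ is either $q^{\lambda_i^*}$ or $q^{\lambda_i^*}-q^{\lambda_{i+1}^*}$. In the second case we have $\lambda_i^*>\lambda_{i+1}^*$, which is forced because a box can be removed from the end of column $i$ only in that situation; the proof of Proposition \ref{PRLAHST} already notes this. So every factor is a monic polynomial in $q$ of degree $\lambda_i^*$, and the product is a polynomial with leading term
$$q^{\sum_{i\in I_{\lambda/\mu}}\lambda_i^*}.$$

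\textbf{Divisibility by $q-1$.} Let $m=\max I_{\lambda/\mu}$. Since $m+1\notin I_{\lambda/\mu}$, the factor is $g_m(q)=q^{\lambda_m^*}-q^{\lambda_{m+1}^*}$ with $\lambda_m^*>\lambda_{m+1}^*$. This factors as $q^{\lambda_{m+1}^*}(q^{\lambda_m^*-\lambda_{m+1}^*}-1)$, which is divisible by $q-1$.

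\textbf{Rewriting the exponent.} It remains to show
$$\sum_{i\in I_{\lambda/\mu}}\lambda_i^*=\sum_{i\in[\ell(\lambda)]} i(\lambda_i-\mu_i).$$
This is the main (combinatorial) step. Every box of $\lambda/\mu$ in column $j$ is the bottom box of that column of $\lambda$, so it lies in row $\lambda_j^*$. Hence $\sum_{j\in I_{\lambda/\mu}}\lambda_j^*$ is the sum of the row indices of the boxes of the strip. Grouping the boxes by row instead, row $i$ contains exactly $\lambda_i-\mu_i$ boxes of the strip. This gives $\sum_i i(\lambda_i-\mu_i)$, where $\mu_i=0$ for $i>\ell(\mu)$ and the sum runs over rows $i\le\ell(\lambda)$.

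Two bookkeeping points need care. The degenerate convention $\lambda/(0)$ with $\ell(\lambda)=1$ is consistent with the argument above. The case $\mu=\lambda$ is excluded, since a horizontal strip is nonempty; if it were allowed, the count of $v$ would include $0$, but here $\lambda\neq\mu$, so the product formula applies.
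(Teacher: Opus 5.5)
Your proof is correct and follows essentially the paper's argument: you read the leading term off the product formula of Proposition \ref{prop:counting_lambda_v_mu}, get divisibility by $q-1$ from the factor $g_m$ at the largest index $m\in I_{\lambda/\mu}$, and convert $\sum_{j\in I_{\lambda/\mu}}\lambda_j^*$ into $\sum_i i(\lambda_i-\mu_i)$ by counting the strip's boxes row by row. You also check explicitly that $\lambda_i^*>\lambda_{i+1}^*$ whenever $i\in I_{\lambda/\mu}$ and $i+1\notin I_{\lambda/\mu}$, which holds since $\mu_i^*=\lambda_i^*-1\ge\mu_{i+1}^*=\lambda_{i+1}^*$; this guarantees each factor is nonzero and monic of degree $\lambda_i^*$, a small point the paper leaves implicit.
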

\begin{proof}
    By Proposition \ref{prop:counting_lambda_v_mu}, the number of vectors in $
    \{v\in \FFq^n: \lambda(v)=\mu\} $ equals
    $$G(q):=\prod_{i\in I_{\lambda/\mu}}g_i(q), \text{ where }  g_i(q):=
        \begin{cases}
            q^{\lambda^*_i}, \text{ if $i+1\in I_{\lambda/\mu}$}\\
            q^{\lambda^*_i}-q^{\lambda^*_{i+1}}, \text{ otherwise.}
        \end{cases}$$
Thus, $G(q)$ is a polynomial in $q$ whose leading term equals 
$$
\prod_{j \in I_{\lambda/\mu}}q^{\lambda^*_j}
= q^{\sum_{j \in I_{\lambda/\mu}}\lambda^*_j}.
$$
We claim that there is an equation ${\sum_{j \in I_{\lambda/\mu}}\lambda^*_j}={\sum_{i\in [k]}i(\lambda_i-\mu_i)}$. For every $i\in [\ell(\lambda)]$, we have
$$
\# \{j\in I_{\lambda/\mu}: \lambda^*_j=i \}= \#\{\text{Boxes in the $i$-th row of the skew Young diagram $\lambda/\mu$}\}
=\lambda_i-\mu_i,
$$
And hence,
$$
{\sum_{j \in I_{\lambda/\mu}}\lambda^*_j}=
\sum_{i\in [\ell(\lambda)]}i\cdot  \# \{j\in I_{\lambda/\mu}: \lambda^*_j=i \}
={\sum_{i\in [\ell(\lambda)]}i(\lambda_i-\mu_i)}
$$
which is the degree of the polynomial $G(q)$.

To check that $G(q)$ is always divisible by $q-1$, let $t$ be the maximal integer in $I_{\lambda/\mu}$. Then, $G (q)$ has a factor $g_t(q)=  q^{\lambda^*_t}-q^{\lambda^*_{t+1}}$ that can be divided by $q-1$.
\end{proof}

\section{Admissible decomposition}\label{SSAdm}
In this section, we discuss how to decompose the generalized parabolic Peterson variety $\Pet_{\lambda,\alpha}$ into a union of  $\Pet_{\lambda,\beta}$ such that no variety in the union is contained in any other. Here we assume the underlying field is $\CC$. However, all the proofs below also hold for the finite field $\FFq$, since we do not require the characteristic to be $0$.

Recall that for every partition $\lambda$ of $n$, we let $N_\lambda$ be the nilpotent matrix in Jordan form of Jordan type $ \lambda$, and $\left( e_{ij} \right)_\lambda$ be
 an orthogonal Jordan basis of $N_\lambda$. Recall that $\AA_\lambda$ is the set of all compositions $\beta$ of $n$ such that $\beta \unlhd \lambda$. For every composition $\alpha$ of $n$, $\RR_\alpha$ is the set of all compositions $\beta$ of $n$ such that $\beta \preceq \alpha$.
 We say that $\Pet_{\lambda,\alpha}$ is \textbf{admissible} if $\alpha \unlhd \lambda$, or in other words, $\alpha \in \AA_\lambda$. Notice that the condition $\alpha \unlhd \lambda$ is also equivalent to the condition that the Kostka number $\KK_{\lambda\alpha}$ is positive. For compositions $\alpha$ and $\beta$ such that $|\alpha|=|\beta|$, recall that $\alpha \circ \beta$ is the composition such that $PS(\alpha \circ \beta)=PS(\alpha) \cup PS(\beta)$.

\subsection{The composition associated with a flag}\label{ssection:41}

For a flag $F_\bullet \in \FL (n)$, we say that $F_\bullet$ is determined by an ordered basis 
$$
\left(b_1,\quad b_2,\quad \ldots\ldots,\quad  b_{n-1},\quad b_n  \right),
$$
if $F_i=\SPAN \{b_1,b_2,\ldots,b_i \}$ for all $i\in [n]$.
\begin{define}\label{def:uniquecompositionofflag}
      Fix a partition $\lambda$ of $n$. For any $F_\bullet \in \Pet_\lambda$, we let $\alpha_\lambda (F_\bullet)$ be the unique composition such that for all $i\in [n]$, 
$$
N_\lambda (F_i) \subset F_i \text{ if and only if }i\in PS(\alpha_\lambda (F_\bullet)).
$$
\end{define}
By definition, for every partition $\lambda$ and composition $\alpha$ such that $|\lambda|=|\alpha|$, a flag $F_\bullet \in \Pet_\lambda$ is in $ \Pet_{\lambda,\alpha}$ if and only if $\alpha_\lambda(F_\bullet)$ refines $\alpha$. 

\begin{ex}
    Let $\lambda=(3,1)$ and $\alpha =(2,2)$, suppose $F_\bullet$ is the flag determined by the following ordered basis
    $$
    \left( e_{12}\ +\ e_{21},\quad e_{11},\quad e_{13},\quad e_{21} \right).
    $$
    Then, we have 
    \begin{align*}
        N_\lambda(F_1)&=\SPAN\{N_\lambda( e_{12}+e_{21})\}=\SPAN\{e_{11}\} ,\\
        N_\lambda(F_2)&=\SPAN\{N_\lambda( e_{12}+e_{21}),N_\lambda(e_{11})\}=\SPAN\{e_{11}\},\\
        N_\lambda(F_3)&=\SPAN\{N_\lambda( e_{12}+e_{21}),N_\lambda(e_{11}),N_\lambda(e_{13})\}=\SPAN\{e_{11},e_{12}\}.
    \end{align*}
    Therefore, we have $N_\lambda(F_1)\not \subseteq F_1$ while $N_\lambda(F_1) \subseteq F_2$, $N_\lambda(F_2) \subseteq F_2\subseteq F_3$, $N_\lambda(F_3)\not \subseteq F_3$ while $N_\lambda(F_3) \subseteq F_4$. We conclude that $F_\bullet \in \Pet_{(3,1)}$ and $\alpha_\lambda(F_\bullet)=(2,2)$. Also, $F_\bullet \in \Pet_{(3,1),(2,2)}$.
\end{ex}

\begin{pr}\label{prop:basic_of_inclusion_composition}
    Given partition $\lambda$ and compositions $\alpha$ and $\beta$ such that $|\lambda|=|\alpha|=|\beta|$, we have
    \begin{itemize}
        \item [(1)] $\Pet _{\lambda,\alpha} \subseteq \Pet_{\lambda,\beta}$ if $\alpha$ is a refinement of $\beta$.
        \item [(2)] $\Pet _{\lambda,\alpha} \cap \Pet_{\lambda,\beta}= \Pet _{\lambda,\gamma}$ , where $\gamma = \alpha \circ \beta$.
\end{itemize}
\end{pr}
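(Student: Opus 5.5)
Both parts reduce to comparing the defining conditions of the two varieties directly, using the partial sum sets $PS(\alpha)$ and $PS(\beta)$. The key observation is how membership in $\Pet_{\lambda,\alpha}$ reads. A flag $F_\bullet$ lies in $\Pet_{\lambda,\alpha}$ exactly when two things hold:
\begin{itemize}
\item $N_\lambda F_i\subset F_{i+1}$ for every $i\in[n-1]$;
\item $N_\lambda F_i\subset F_i$ for every $i\in PS(\alpha)$.
\end{itemize}
The first condition applies to all $i$ because $h_\alpha(i)\le i+1$ and the flag is nested. Indeed, $h_\alpha(i)=i$ precisely on $PS(\alpha)$ and $h_\alpha(i)=i+1$ otherwise, with $n\in PS(\alpha)$ always. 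So $\Pet_{\lambda,\alpha}$ is the subset of $\Pet_\lambda$ cut out by the invariance conditions indexed by $PS(\alpha)$. Equivalently, in the language of Definition \ref{def:uniquecompositionofflag}, a flag $F_\bullet\in\Pet_\lambda$ lies in $\Pet_{\lambda,\alpha}$ if and only if $PS(\alpha)\subseteq PS(\alpha_\lambda(F_\bullet))$, that is, $\alpha_\lambda(F_\bullet)\preceq\alpha$. I will use this reformulation throughout.

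For (1), suppose $\alpha\preceq\beta$, so $PS(\beta)\subseteq PS(\alpha)$. Then for every $i$ we have $h_\alpha(i)\le h_\beta(i)$: if $i\in PS(\beta)$ then $i\in PS(\alpha)$, so both values equal $i$; otherwise $h_\beta(i)=i+1\ge h_\alpha(i)$. Hence for $F_\bullet\in\Pet_{\lambda,\alpha}$ we get
\[
N_\lambda F_i\subset F_{h_\alpha(i)}\subset F_{h_\beta(i)}
\]
for all $i$, so $F_\bullet\in\Pet_{\lambda,\beta}$.

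For (2), I take $\gamma=\alpha\circ\beta$, so $PS(\gamma)=PS(\alpha)\cup PS(\beta)$. The inclusion $\Pet_{\lambda,\gamma}\subseteq \Pet_{\lambda,\alpha}\cap\Pet_{\lambda,\beta}$ follows from (1), since $\gamma$ refines both $\alpha$ and $\beta$. For the reverse inclusion, let $F_\bullet$ lie in both varieties. Then $F_\bullet\in\Pet_\lambda$, and $N_\lambda F_i\subset F_i$ for all $i\in PS(\alpha)$ and for all $i\in PS(\beta)$, hence for all $i\in PS(\gamma)$. This is exactly the condition $N_\lambda F_i\subset F_{h_\gamma(i)}$ for all $i$. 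Equivalently, $PS(\alpha)\cup PS(\beta)\subseteq PS(\alpha_\lambda(F_\bullet))$.

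There is no real obstacle here. The only point needing care is that the conditions $N_\lambda F_i\subset F_{i+1}$ hold for every $i\in[n-1]$ in each of these varieties, even when $i\in PS(\alpha)$, so that they all sit inside $\Pet_\lambda$.
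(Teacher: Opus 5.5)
Your proof is correct and is essentially the paper's argument. The paper phrases both parts through the flag's composition $\alpha_\lambda(F_\bullet)$: membership in $\Pet_{\lambda,\alpha}$ means $PS(\alpha)\subseteq PS(\alpha_\lambda(F_\bullet))$, and your part (1) via the pointwise comparison $h_\alpha\le h_\beta$ is exactly the general fact the paper records in the remark right after the proposition, so both rest on the same containment of partial sum sets.
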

\begin{proof}
To show (1), for every $F_\bullet \in \Pet_{\lambda,\alpha}$, we have $\alpha_\lambda(F_\bullet)$ refines $\alpha$. By assumption, $\alpha$ refines $\beta$, so $\alpha_\lambda(F_\bullet)$ refines $\beta$ too. Therefore, $F_\bullet \in \Pet_{\lambda,\beta}$ and $\Pet_{\lambda,\alpha}\subseteq \Pet_{\lambda,\beta}$.

 To show (2), since $\gamma$ refines both $\alpha$ and $\beta$, by (1) we have $\Pet _{\lambda,\gamma} \subseteq \Pet _{\lambda,\alpha}$ and $\Pet _{\lambda,\gamma} \subseteq \Pet _{\lambda,\beta}$. Therefore, $\Pet _{\lambda,\gamma} \subseteq \Pet _{\lambda,\alpha}\cap \Pet _{\lambda,\beta}$. On the other hand, if $F_\bullet \in \Pet _{\lambda,\alpha} \cap \Pet_{\lambda,\beta}$, we have that $\alpha_\lambda(F_\bullet)$ refines both $\alpha$ and $\beta$ and hence it refines $\alpha \circ \beta$. Therefore, $F_\bullet \in \Pet _{\lambda,\gamma}$ and $\Pet _{\lambda,\alpha} \cap \Pet_{\lambda,\beta}\subseteq \Pet _{\lambda,\gamma}$.
\end{proof}

\begin{remark}
    Statements (1) and (2) of Proposition \ref{prop:basic_of_inclusion_composition} follow from more general facts about Hessenberg varieties. For an arbitrary complex matrix $X$ and two Hessenberg functions $h_1$ and $h_2$, we have the following
    \begin{itemize}
    \item [(1)] If $h_1(i) \le h_2(i)$ holds for all $i$, then 
        $\Hess(X,h_1) \subset \Hess(X,h_2)$, and 
        \item[(2)] $\Hess(X,h_1)\cap \Hess(X,h_2) =  \Hess(X,h_3)$, where $h_3:\ i \mapsto \min \{h_1 (i), h_2(i) \}$.
        
    \end{itemize}
\end{remark}

In general, the converse statement of (1) of Proposition \ref{prop:basic_of_inclusion_composition} is not true. For a counterexample, consider $\lambda=(1,1,1),\alpha=(2,1)$ and $\beta=(2,1)$. Note that $\Pet_{(1,1,1),(2,1)}=\Pet_{(1,1,1),(1,2)}$, since both varieties are equal to $\operatorname{Fl}(3)$. But neither $(2,1)$ nor $(1,2)$ is a refinement of the other. However, we show below that the converse statement will hold under the assumption that
$\alpha$ and $\beta$ are both in $\AA_\lambda$. This fact is important to our study of the parabolic Peterson variety.

\begin{pr}\label{prop:necessary_nonempty_partition_composition}
    Given a partition $\lambda$ and a composition $\alpha $ such that $|\lambda|=|\alpha|$, the set
$$
 \{F_\bullet \in \Pet_\lambda: \alpha_\lambda(F_\bullet)=\alpha\}
$$
is not empty if and only if $\alpha \unlhd \lambda$, i.e., $\alpha \in \AA_\lambda$.
\end{pr}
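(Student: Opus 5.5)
We argue by induction on $n=|\lambda|$, mirroring the Kostka recursion of Corollary \ref{cor:exist_pos_Kostka} and Lemma \ref{lem::kostka_number_recursion}. The key structural observation is about the first block of a flag. Let $F_\bullet\in\Pet_\lambda$ with $\alpha_\lambda(F_\bullet)=\alpha$, and set $a=\alpha_1$. The conditions $N_\lambda F_i\subset F_{i+1}$ for $i<a$, $N_\lambda F_a\subset F_a$, and $N_\lambda F_i\not\subset F_i$ for $i<a$ force $F_a$ to be a cyclic subspace.

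To see this, pick $v\in F_1$. Induction on $i$ gives $F_{i}=\SPAN\{v,N_\lambda v,\ldots,N_\lambda^{i-1}v\}$ for $i\le a$. Indeed, $N_\lambda F_i\subset F_{i+1}$ but $N_\lambda F_i\not\subset F_i$, so $N_\lambda^{i}v\notin F_i$, and hence $F_{i+1}=F_i+\SPAN\{N_\lambda^i v\}$. Thus $F_a=\langle v\rangle_\lambda$ has dimension exactly $a$. The quotient flag $F_{a+1}/F_a\subset\cdots\subset \K^n/F_a$ then lies in $\Pet_\mu$ for $\mu=\lambda(v)$, and its associated composition is $\overline{\alpha}$. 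Conversely, given $v$ with $\dim\langle v\rangle_\lambda=a$ and a flag $G_\bullet\in\Pet_\mu$ with $\alpha_\mu(G_\bullet)=\overline\alpha$, we can concatenate the chain $\langle v\rangle$ with the preimage of $G_\bullet$. This produces $F_\bullet\in\Pet_\lambda$ with $\alpha_\lambda(F_\bullet)=\alpha$, because $N_\lambda F_i\subset F_i$ for $i>a$ iff the same holds in the quotient, as $F_a\subset F_i$.

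By Corollary \ref{cor:box_delete} and Proposition \ref{PRLAHST}, as $v$ ranges over vectors with $\dim\langle v\rangle_\lambda=a$, the partition $\lambda(v)$ ranges exactly over $\Gamma(\lambda,n-a)=\Gamma(\lambda,|\overline\alpha|)$. Here $\dim\langle v\rangle_\lambda$ equals the number of removed boxes, since each nonzero $N_\lambda^jv$ deletes one box. So the set in the statement is nonempty iff there is some $\mu\in\Gamma(\lambda,|\overline\alpha|)$ with $\{G_\bullet\in\Pet_\mu:\alpha_\mu(G_\bullet)=\overline\alpha\}\neq\emptyset$. By induction, the latter holds iff $\overline\alpha\unlhd\mu$, i.e. $\KK_{\mu\overline\alpha}>0$. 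The base case $n=0$ uses the convention $\KK_{(0)(0)}=1$; for $a=n$, the flag is the cyclic one and $\mu=(0)$. By Lemma \ref{lem::kostka_number_recursion}, $\KK_{\lambda\alpha}=\sum_{\mu\in\Gamma(\lambda,|\overline\alpha|)}\KK_{\mu\overline\alpha}$. Since all terms are nonnegative, some summand is positive iff $\KK_{\lambda\alpha}>0$, which is equivalent to $\alpha\unlhd\lambda$.

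The main obstacle is the identification step: checking that the quotient operator is $N_\mu$ up to conjugation, and that $\alpha_\mu$ of the quotient flag is precisely $\overline\alpha$. In particular, we must confirm that the Hessenberg conditions $N_\lambda F_i\subset F_{i+1}$ and the strict noninclusions transfer exactly through the quotient by the $N_\lambda$-stable subspace $F_a$. We must also make sure that conjugating to Jordan form does not change $\alpha_\mu$, which holds because it is invariant under change of basis commuting with the operator.
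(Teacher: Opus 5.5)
Your proof is correct. Its sufficiency half is essentially the paper's argument. The paper also uses Corollary \ref{cor:exist_pos_Kostka} to choose $\mu\in\Gamma(\lambda,|\overline\alpha|)$ with $\overline\alpha\unlhd\mu$, realizes $\mu$ as $\lambda(v)$ by Proposition \ref{PRLAHST}, lifts a flag from $\CC^n/\langle v\rangle_\lambda$, and prepends $v,N_\lambda v,\ldots,N_\lambda^{\alpha_1-1}v$. The only cosmetic difference is that it inducts on $\ell(\alpha)$ rather than on $n$.

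The genuine difference is in necessity. The paper does not pass to the quotient there. Instead it uses Lemma \ref{lem:form_of_flag} to write the flag in blocks $v_i,N_\lambda v_i,\ldots,N_\lambda^{\alpha_i-1}v_i$. It then bounds the total size of any $j$ blocks by $\lambda_1+\cdots+\lambda_j$, appealing to the lengths of the $j$ longest $\lambda$-Jordan chains. This gives the partial-sum inequalities for $\alpha\unlhd\lambda$ directly.

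You instead observe that your reduction is an equivalence: the first block is the cyclic space $\langle v\rangle_\lambda$, and the quotient flag lies in $\Pet_{\lambda(v)}$ with composition $\overline\alpha$. Both directions then follow at once from the Pieri recursion in Lemma \ref{lem::kostka_number_recursion} together with $\KK_{\lambda\alpha}>0\iff\alpha\unlhd\lambda$.

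What your route buys:
\begin{itemize}
\item It is a single symmetric argument, with the same structure the paper later exploits in Lemma \ref{lem:bijection_Uv}.
\item It avoids a point the paper glosses over. For $i\ge 2$, the blocks $v_i,\ldots,N_\lambda^{\alpha_i-1}v_i$ are chains only modulo the earlier part of the flag, not genuine $\lambda$-Jordan chains, since $N_\lambda^{\alpha_i}v_i$ need not vanish. So the paper's bound really needs the fact that an $N_\lambda$-stable subspace generated by $j$ vectors has dimension at most $\lambda_1+\cdots+\lambda_j$.
\end{itemize}

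What the paper's route buys is a direct linear-algebraic explanation of the dominance inequalities. In the necessity direction it does not rely on the positivity criterion for Kostka numbers.
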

We first discuss the important corollaries of this proposition and provide the proof in the next subsection.

\begin{cor}\label{cor:weak_version_admissible_decomposition}
  For every partition $\lambda$ and composition $\alpha$ such that $|\lambda|=|\alpha|$, we have  
  \begin{itemize}
      \item [(1)]  $\Pet _{\lambda} =\bigcup_{\beta \in \AA_\lambda} \Pet _{\lambda,\beta}$.

      \item[(2)] $\Pet _{\lambda,\alpha} =\cup_{\beta\in \AA_\lambda \cap \RR_\alpha}
      \Pet_{\lambda,\beta}$.
  \end{itemize}
\end{cor}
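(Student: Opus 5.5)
Both parts follow from Proposition \ref{prop:necessary_nonempty_partition_composition} by sorting flags according to their associated composition $\alpha_\lambda(F_\bullet)$; I would take that proposition as given, as its proof is deferred.

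For (1), the inclusion $\supseteq$ is immediate. Each $\Pet_{\lambda,\beta}=\Hess(N_\lambda,h_\beta)$ lies in $\Pet_\lambda=\Hess(N_\lambda,h_\Delta)$, because $h_\beta(i)\le h_\Delta(i)$ for every $i$. For $\subseteq$, take any $F_\bullet\in\Pet_\lambda$ and set $\beta:=\alpha_\lambda(F_\bullet)$ as in Definition \ref{def:uniquecompositionofflag}. Then $F_\bullet$ lies in $\{F_\bullet\in\Pet_\lambda:\alpha_\lambda(F_\bullet)=\beta\}$, so this set is nonempty. Proposition \ref{prop:necessary_nonempty_partition_composition} then gives $\beta\in\AA_\lambda$. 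Since $\beta$ trivially refines itself, the remark after Definition \ref{def:uniquecompositionofflag} gives $F_\bullet\in\Pet_{\lambda,\beta}$. This proves $\Pet_\lambda=\bigcup_{\beta\in\AA_\lambda}\Pet_{\lambda,\beta}$.

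For (2), the inclusion $\supseteq$ follows from Proposition \ref{prop:basic_of_inclusion_composition}(1): every $\beta\in\RR_\alpha$ refines $\alpha$, so $\Pet_{\lambda,\beta}\subseteq\Pet_{\lambda,\alpha}$. For $\subseteq$, take $F_\bullet\in\Pet_{\lambda,\alpha}\subseteq\Pet_\lambda$ and again set $\beta:=\alpha_\lambda(F_\bullet)$. As in (1), $\beta\in\AA_\lambda$ and $F_\bullet\in\Pet_{\lambda,\beta}$. Since $F_\bullet\in\Pet_{\lambda,\alpha}$, the characterization after Definition \ref{def:uniquecompositionofflag} says $\beta$ refines $\alpha$, so $\beta\in\RR_\alpha$. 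Hence $F_\bullet$ lies in the right-hand union.

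There is no genuine obstacle at this level. All the substance sits in the "only if" direction of Proposition \ref{prop:necessary_nonempty_partition_composition}: the composition attached to any flag in $\Pet_\lambda$ must be dominated by $\lambda$. The only point needing care is that the composition used for each flag is its own associated composition $\alpha_\lambda(F_\bullet)$, not the $\alpha$ given in the statement.
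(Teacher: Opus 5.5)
Your proof is correct and matches the paper's argument: each flag is classified by its own composition $\alpha_\lambda(F_\bullet)$, the ``only if'' direction of Proposition~\ref{prop:necessary_nonempty_partition_composition} gives domination by $\lambda$, and Proposition~\ref{prop:basic_of_inclusion_composition}(1) gives the reverse inclusion. The only differences are cosmetic: the paper obtains (1) as the case $\alpha=(n)$ of (2) rather than proving it separately, and you state explicitly the step $\alpha_\lambda(F_\bullet)\preceq\alpha$, which the paper leaves implicit.
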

\begin{proof}
    It suffices to prove (2) since (1) is a special case of (2) by substituting $\alpha=(n)$. To show (2) is true,  for every $\beta \in \AA_\lambda$ such that $\beta \preceq \alpha$, by Proposition \ref{prop:basic_of_inclusion_composition}, we have $\Pet _{\lambda,\beta}\subset \Pet_{\lambda,\alpha}$. Therefore, $\bigcup_{\beta\in \AA_\lambda \cap \RR_\alpha}\Pet _{\lambda,\beta} \subset \Pet _{\lambda,\alpha}$.

    On the other hand, for every $F_\bullet \in \Pet_{\lambda,\alpha}$, let $\gamma=\alpha_\lambda(F_\bullet)$, so $F_\bullet \in \Pet_{\lambda,\gamma}$. By Proposition \ref{prop:necessary_nonempty_partition_composition}, we have $\gamma \unlhd \lambda$ and we conclude \begin{equation*}
        F_\bullet \in \Pet_{\lambda,\gamma}\subseteq \cup_{\beta\in \AA_\lambda \cap \RR_\alpha} \Pet_{\lambda,\beta}. \qedhere
    \end{equation*}
\end{proof}

\begin{cor}\label{cor:iff_refinement}
     Given a partition $\lambda$ and compositions $\alpha$ and $\beta$ such that $\alpha,\beta \in \AA_\lambda$,  $\Pet _{\lambda,\alpha} \subseteq \Pet_{\lambda,\beta}$ if and only if $\alpha$ is a refinement of $\beta$.
\end{cor}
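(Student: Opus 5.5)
The "if" direction is Proposition \ref{prop:basic_of_inclusion_composition}(1), so the work is the converse. Assume $\alpha,\beta\in\AA_\lambda$ and $\Pet_{\lambda,\alpha}\subseteq\Pet_{\lambda,\beta}$. The plan is to use Proposition \ref{prop:necessary_nonempty_partition_composition} to produce a witness flag. Since $\alpha\unlhd\lambda$, there exists a flag $F_\bullet\in\Pet_\lambda$ with $\alpha_\lambda(F_\bullet)=\alpha$ exactly.

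This flag lies in $\Pet_{\lambda,\alpha}$, because $\alpha_\lambda(F_\bullet)=\alpha$ trivially refines $\alpha$. By the assumed inclusion, $F_\bullet\in\Pet_{\lambda,\beta}$. By the characterization stated right after Definition \ref{def:uniquecompositionofflag}, this means $\alpha_\lambda(F_\bullet)=\alpha$ refines $\beta$.

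Concretely, at the level of partial sums: for every $i\in PS(\beta)$, membership in $\Pet_{\lambda,\beta}$ forces $N_\lambda F_i\subseteq F_i$. By the definition of $\alpha_\lambda(F_\bullet)$, this gives $i\in PS(\alpha)$. Hence $PS(\beta)\subseteq PS(\alpha)$, i.e.\ $\alpha\preceq\beta$.

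Only the hypothesis $\alpha\in\AA_\lambda$ is actually used; $\beta\in\AA_\lambda$ is not needed. The counterexample $\lambda=(1,1,1)$ preceding Proposition \ref{prop:necessary_nonempty_partition_composition} shows why nonemptiness matters: there, no flag has $\alpha_\lambda(F_\bullet)=(2,1)$. The real content is the existence statement in Proposition \ref{prop:necessary_nonempty_partition_composition}, which I take as given. Given that result, this corollary has no substantive obstacle beyond unwinding definitions carefully.
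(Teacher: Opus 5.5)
Your proposal is correct and follows essentially the paper's argument. Both take the witness flag $F_\bullet$ with $\alpha_\lambda(F_\bullet)=\alpha$ from Proposition \ref{prop:necessary_nonempty_partition_composition} and read off $PS(\beta)\subseteq PS(\alpha)$; the paper phrases this as a contrapositive. You are also right that only $\alpha\in\AA_\lambda$ is used, which is likewise true of the paper's proof.
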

\begin{proof}
When  $\alpha \preceq \beta$, we have  $\Pet _{\lambda,\alpha} \subseteq \Pet_{\lambda,\beta}$ by Proposition \ref{prop:basic_of_inclusion_composition}. On the other hand, suppose $\alpha,\beta \in \AA_\lambda$ and $\alpha \not \preceq \beta$. By Proposition \ref{prop:necessary_nonempty_partition_composition}, there exists a flag $F_\bullet$ such that $\alpha_\lambda(F_\bullet)=\alpha$ which is not a refinement of $\beta$. Therefore, $F_\bullet \in \Pet_{\lambda,\alpha}$ while  $F_\bullet\not  \in \Pet_{\lambda,\beta}$. So, we have $\Pet _{\lambda,\alpha} \not \subseteq \Pet_{\lambda,\beta}$ if $\alpha \not \preceq \beta$, which finishes the proof.
\end{proof}
\begin{remark}
    By Proposition \ref{prop:basic_of_inclusion_composition} and Corollary \ref{cor:iff_refinement}, for every partition $\lambda$ there is an isomorphism 
    $$(\AA_\lambda,\ \preceq,\ \circ)  \cong (\{\Pet _{\lambda,\alpha}: \alpha \in \AA_\lambda \},\ \subseteq ,\ \cap)$$
    of posets defined by $\alpha \mapsto \Pet_{\lambda,\alpha}$ that preserves the relevant binary operations. 
\end{remark}

\begin{ex}
    Let $\lambda=(3,1)$. By Corollary \ref{cor:weak_version_admissible_decomposition}, we have
    \begin{align*}
        \Pet_{(3,1)} = &\Pet_{(3,1),(3,1)} \cup  \Pet_{(3,1),(2,2)} \cup   \Pet_{(3,1),(1,3)} \\ &\cup  \Pet_{(3,1),(2,1,1)} \cup  \Pet_{(3,1),(1,2,1)} \cup  \Pet_{(3,1),(1,1,2)}\cup  \Pet_{(3,1),(1,1,1,1)}.
    \end{align*}
By Corollary \ref{cor:iff_refinement}, we further have the decomposition
    $$
    \Pet_{(3,1)} = \Pet_{(3,1),(3,1)} \cup  \Pet_{(3,1),(2,2)} \cup  \Pet_{(3,1),(1,3)}
    $$
    satisfies the condition that none of $ \Pet_{(3,1),(3,1)} $, $ \Pet_{(3,1),(2,2)} $ or $ \Pet_{(3,1),(1,3)}$ is contained in the other. The Hasse diagram of the
poset  $\left( \{\Pet_{\lambda,\alpha}:\alpha\in\AA _\lambda \},\subseteq \right)$ is the following.
    $$
\begin{tikzpicture}[scale=.7]
  \node (31) at (-4,2) {$\Pet_{(3,1),(3,1)}$};
  \node (22) at (0,2) {$\Pet_{(3,1),(2,2)}$};
  \node (13) at (4,2) {$\Pet_{(3,1),(1,3)}$};
  \node (211) at (-6,0) {$\Pet_{(3,1),(2,1,1)}$};
  \node (121) at (0,0) {$\Pet_{(3,1),(1,2,1)}$};
  \node (112) at (6,0) {$\Pet_{(3,1),(1,1,2)}$};
  \node (1111) at (0,-2) {$\Pet_{(3,1),(1,1,1,1)}=\BB_{(3,1)}$};
  \draw (1111) -- (112) -- (13) -- (121) -- (1111) -- (211) -- (31);
  \draw (112)--(22) -- (211);
  \draw (31)--(121);
\end{tikzpicture}
$$
The reader can confirm that this diagram is the same as the one for $(\AA_\lambda,\preceq)$ in Example \ref{EX2231}.
\end{ex}

\begin{ex}
    Let $\lambda=(2,2)$. By Corollary \ref{cor:weak_version_admissible_decomposition}, we have
    \begin{align*}
        \Pet_{(2,2)} = &\Pet_{(2,2),(2,2)} \cup  \Pet_{(2,2),(1,2,1)}  \\\cup & \Pet_{(2,2),(2,1,1)} \cup  \Pet_{(2,2),(1,1,2)} \cup  \Pet_{(2,2),(1,1,1,1)}.
    \end{align*}
   By Corollary \ref{cor:iff_refinement}, we further have the decomposition
    $$
    \Pet_{(2,2)} = \Pet_{(2,2),(2,2)} \cup  \Pet_{(2,2),(1,2,1)}
    $$
    such that neither of $ \Pet_{(2,2),(2,2)}  $ or $ \Pet_{(2,2),(1,2,1)}$ is contained in the other. The Hasse diagram of the poset $\left( \{\Pet_{\lambda,\alpha}:\alpha\in\AA _\lambda \},\subseteq \right)$ as the following.
    $$
\begin{tikzpicture}[scale=.7]
  \node (22) at (0,2) {$\Pet_{(2,2),(2,2)}$};
  \node (211) at (-3,0) {$\Pet_{(2,2),(2,1,1)}$};
  \node (121) at (-8,0) {$\Pet_{(2,2),(1,2,1)}$};
  \node (112) at (3,0) {$\Pet_{(2,2),(1,1,2)}$};
  \node (1111) at (0,-2) {$\Pet_{(2,2),(1,1,1,1)}=\BB_{(2,2)}$};
 \draw (1111) -- (112) -- (22) -- (211) -- (1111) -- (121);
\end{tikzpicture}
$$

    The reader can confirm that this diagram is the same as the one for $(\AA_\lambda,\preceq)$ in Example \ref{EX2222}.
\end{ex}

Theorem \ref{thm:admissible_decomposition} states that for every $\Pet_{\lambda,\alpha}$, we have the decomposition
    \begin{equation}
    \Pet_{\lambda,\alpha} = \bigcup _{\beta \in \left(\AA_\lambda \cap \RR_\alpha\right)^{\CST}}
\Pet_{\lambda,\beta}
    \end{equation}
    such that no variety in the union is contained within another. From now on, we will call this the \textbf{admissible decomposition} of $\Pet_{\lambda,\alpha}$. We are ready to provide proof of this theorem.

\begin{proof}[Proof of Theorem \ref{thm:admissible_decomposition}]
    For every $\beta$ such that $\beta \preceq \alpha$ and $\beta \unlhd \lambda$, we have $\Pet_{\lambda,\beta}\subset \Pet_{\lambda,\alpha}$ by Proposition \ref{prop:basic_of_inclusion_composition}. Therefore,
    \begin{equation}\label{equation:onesideinclusion}
        \bigcup _{\beta \in \left(\AA_\lambda\cap \RR_\alpha\right) ^{\CST}} \Pet_{\lambda,\beta} \subseteq \Pet_{\lambda,\alpha}.
    \end{equation}

    To show the opposed inclusion, by Corollary \ref{cor:weak_version_admissible_decomposition} we have
    $$
    \Pet _{\lambda,\alpha}= \bigcup_{\beta\in \AA_\lambda \cap \RR_\alpha} \Pet _{\lambda,\beta}.
    $$
Every $\beta$ in the set $ \AA_\lambda \cap \RR_\alpha$ refines at least one of the coarsest elements, which may be $\beta$ itself. By Corollary \ref{cor:iff_refinement}, we have $\Pet _{\lambda,\beta}\subseteq \bigcup_{\beta \in \left(\AA_\lambda\cap \RR_\alpha\right) ^{\CST}}\Pet_{\lambda,\beta}$ and so
$$
\Pet_{\lambda,\alpha}=\bigcup_{\beta\in \AA_\lambda \cap \RR_\alpha} \Pet _{\lambda,\beta} \subseteq \bigcup_{\beta \in \left(\AA_\lambda\cap \RR_\alpha\right) ^{\CST}}\Pet_{\lambda,\beta}.
$$
Therefore, combining this with (\ref{equation:onesideinclusion}), we have
\begin{equation}
\Pet_{\lambda,\alpha} = \bigcup_{\beta \in \left(\AA_\lambda\cap \RR_\alpha\right) ^{\CST}}\Pet_{\lambda,\beta}.
    \end{equation}
    Notice on the right-hand side, since all these $\beta$ are coarsest, none is a refinement of any other. By Corollary \ref{cor:iff_refinement} again, every variety $\Pet_{\lambda,\beta}$ in this union is not contained in any other.
\end{proof}

\begin{ex}
    When $\lambda=(4,2)$ and $\alpha=(5,1)$. The set $\left(\AA_{(4,2)}\cap \RR_{(5,1)}\right)^{\CST}$ is
    $$\bigl\{(4,1,1),(3,2,1),(2,3,1),(1,4,1)\bigr\}.$$
    Therefore, by Theorem \ref{thm:admissible_decomposition}, we have
    $$
    \Pet_{(4,2),(5,1)}= \Pet_{(4,2),(4,1,1)} \cup \Pet_{(4,2),(3,2,1)} \cup  \Pet_{(4,2),(2,3,1)} \cup  \Pet_{(4,2),(1,4,1)}.
    $$
\end{ex}

\subsection{The proof of Proposition \ref{prop:necessary_nonempty_partition_composition}}
We begin with a technical lemma. 

\begin{lem}\label{lem:form_of_flag}
 Let $\lambda$ be a partition and $\alpha$ be a composition such that $|\lambda|=|\alpha|$ and $\ell(\alpha)=\ell$. If $F_\bullet$ is a flag in $ \Pet_\lambda$ such that $\alpha_\lambda(F_\bullet)=\alpha$, then $F_\bullet$ is determined by an ordered basis of the form
    $$
    \left( v_1,\ {N_\lambda}v_1,\ \ldots, \ {N_\lambda}^{\alpha_1-1}v_1 ,\ v_2,\ {N_\lambda}v_2,\ \ldots, \ {N_\lambda}^{\alpha_2-1}v_2,\ \ldots,\ 
     v_\ell,\ {N_\lambda}v_\ell,\ \ldots, 
    {N_\lambda}^{\alpha_\ell-1}v_\ell \right),
    $$
    for vectors $v_1,v_2,\ldots,v_\ell \in \CC^n$.
\end{lem}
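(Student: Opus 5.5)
I will build the basis block by block, working through the partial sums of $\alpha$. Write $p_j := ps_j(\alpha)$ and $p_0:=0$. By Definition \ref{def:uniquecompositionofflag}, the indices $i$ with $N_\lambda F_i\subseteq F_i$ are exactly $p_1,\dots,p_\ell$. Since $F_\bullet\in\Pet_\lambda$, for every other $i$ we have $N_\lambda F_i\subseteq F_{i+1}$ but $N_\lambda F_i\not\subseteq F_i$. The claim is then the following: for each $j$ there is a vector $v_j\in F_{p_{j-1}+1}\setminus F_{p_{j-1}}$ such that, for $0\le k\le \alpha_j-1$,
$$F_{p_{j-1}+k+1}=F_{p_{j-1}+k}+\SPAN\{N_\lambda^k v_j\}.$$
Concatenating these blocks gives exactly the ordered basis in the statement.

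Fix a block $j$. I take $v_j$ to be any vector of $F_{p_{j-1}+1}$ not lying in $F_{p_{j-1}}$, so $F_{p_{j-1}+1}=F_{p_{j-1}}+\SPAN\{v_j\}$. I then prove by induction on $k$, for $1\le k\le \alpha_j-1$, that $F_{p_{j-1}+k+1}=F_{p_{j-1}+k}+\SPAN\{N_\lambda^k v_j\}$.

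For the inductive step, set $i=p_{j-1}+k$, which satisfies $p_{j-1}<i<p_j$. By the induction hypothesis $F_i=F_{p_{j-1}}+\SPAN\{v_j,\dots,N_\lambda^{k-1}v_j\}$. Since $p_{j-1}$ is a partial sum (or $0$), we have $N_\lambda F_{p_{j-1}}\subseteq F_{p_{j-1}}\subseteq F_i$. Also $N_\lambda(N_\lambda^{m}v_j)\in F_i$ for $m\le k-2$. Hence
$$N_\lambda F_i\subseteq F_i+\SPAN\{N_\lambda^k v_j\}.$$
Because $i\notin PS(\alpha)$, we know $N_\lambda F_i\not\subseteq F_i$, so $N_\lambda^k v_j\notin F_i$. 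Because $F_\bullet\in\Pet_\lambda$, we know $N_\lambda F_i\subseteq F_{i+1}$, so $N_\lambda^k v_j\in F_{i+1}$. Comparing dimensions then forces $F_{i+1}=F_i+\SPAN\{N_\lambda^k v_j\}$.

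The only delicate point is the case $k=1$. There $F_i=F_{p_{j-1}}+\SPAN\{v_j\}$, and the argument goes through exactly as written, using only the $N_\lambda$-stability of $F_{p_{j-1}}$. That stability is where the partial-sum condition at $p_{j-1}$ enters; for $j=1$ it holds trivially since $F_0=0$. I do not expect a real obstacle here. The one thing to verify carefully is the index bookkeeping: block $j$ ends exactly at $p_j$, and the vectors of that block fill $F_{p_j}$. This follows from the dimension count, since $\alpha_j$ vectors are added starting from $F_{p_{j-1}}$.
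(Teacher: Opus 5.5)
Your proposal is correct and follows essentially the same route as the paper: a block-by-block induction in which $N_\lambda F_i\not\subseteq F_i$ and $N_\lambda F_i\subseteq F_{i+1}$ (for $i\notin PS(\alpha)$) force $F_{i+1}=F_i+\SPAN\{N_\lambda^k v_j\}$. You are more explicit than the paper, which treats the blocks $j\ge 2$ only with ``we can perform the same process''; you spell out that the $N_\lambda$-stability of $F_{p_{j-1}}$ is what lets the first-block argument carry over.
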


\begin{proof}
Let $v_1\in \CC^n$ be a vector that spans $F_1$. We will prove by induction that for all $i\le \alpha_1$, we have 
    $$
   F_i=\SPAN\{ v_1,  N_\lambda v_1,  N_\lambda ^2 v_1, \ldots, N_\lambda^{i-1} v_1\}.
    $$
The case $i=1$ holds by assumption. When $i>1$, assume that
$$F_{i-1}=\SPAN\{ v_1,  N_\lambda v_1,  N_\lambda ^2 v_1, \ldots, N_\lambda^{i-2} v_1\}.$$ Then, $$N_\lambda F_{i-1}=\SPAN\{N_\lambda v_1,  N_\lambda^2 v_1,  N_\lambda ^3 v_1, \ldots, N_\lambda^{i-1}v_1\}.$$ 
Since $i-1<\alpha_1$, by the definition of the composition $\alpha=\alpha_\lambda(F_\bullet)$, we have $N_\lambda v_1\subseteq F_i$ while $N_\lambda v_1\not\subseteq F_{i-1}$. Hence, $
   F_i=\SPAN\{ v_1,  N_\lambda v_1,  N_\lambda ^2 v_1, \ldots, N_\lambda^{i-1} v_1\}
    $ for all $i\le \alpha_1$. Specifically, $F_{\alpha_1}=\SPAN\{ v_1,  N_\lambda v_1,  N_\lambda ^2 v_1, \ldots, N_\lambda^{\alpha_1-1} v_1\}$.

Then, since $F_{\alpha_1}\subseteq F_{\alpha_1+1}$, the subspace $F_{\alpha_1+1}$ must be of the form 
$$\SPAN\{ v_1,  N_\lambda v_1,  N_\lambda ^2 v_1, \ldots, N_\lambda^{\alpha_1-1} v_1,v_2\}
$$ for some vector $v_2$. We can perform the same process and show that for all s $i<\alpha_2$, $F_{\alpha_1+i}$ is of the form
 $$\SPAN\{ v_1,  N_\lambda v_1,  N_\lambda ^2 v_1, \ldots, N_\lambda^{\alpha_1-1} v_1,v_2,N_\lambda v_2,\ldots,N_\lambda^{i-1} v_2\}.$$
 Specifically, 
 $$F_{\alpha_1+\alpha_2}=\SPAN\{ v_1,  N_\lambda v_1,  N_\lambda ^2 v_1, \ldots, N_\lambda^{\alpha_1-1} v_1,v_2,N_\lambda v_2,\ldots,N_\lambda^{\alpha_2-1} v_2\}.$$ Continuing this process, we obtain the desired property.
\end{proof}

\begin{remark}\label{remark:compare_htv_alpha1}
    Given $F_\bullet \in \Pet_\lambda$, let $\alpha=\alpha_\lambda(F_\bullet)$, and suppose $v_1\in \CC^n$ is a vector that spans $F_1$. By the proof above, $N_\lambda^{\alpha_1-1}v\not=0$ while $N_\lambda^{\alpha_1}v=0$. Then, by Proposition \ref{prop:dim_of_cycle_subspace}, we have $\alpha_1 = \dim \langle v \rangle _\lambda$.
\end{remark}

\begin{proof}[Proof of Proposition \ref{prop:necessary_nonempty_partition_composition}]
We first show the necessary part of the statement: for every $F_\bullet \in \Pet_\lambda$, let $\mu=(\mu_1,\ldots,\mu_\ell)$ be the  partition obtained by rearranging the entries of $\alpha_\lambda(F_\bullet)$.  We want to show that $\mu \unlhd \lambda$, or, more explicitly, the following inequality that holds for every $j\in [\ell(\lambda)]$:
\begin{equation}\label{equation:Jordan_chain}
     \mu_1+ \mu_2+\ldots +\mu_j = \max_{\{i_1,\ldots,i_j\} \subset [\ell]} \left\{ \alpha_{i_1}+\alpha_{i_2} +\ldots+ \alpha_{i_j} \right\} \le \lambda_1 +\lambda_2 + \ldots + \lambda_j. 
\end{equation}
By Lemma \ref{lem:form_of_flag}, for every $\{i_1,\ldots,i_j\} \subset [\ell] $, there exist $\alpha_{i_1}+\alpha_{i_2} +\ldots+ \alpha_{i_j} $ different vectors of the form 
    $$v_{i_1},  N_\lambda v_{i_1},  \ldots ,   N_\lambda ^{\alpha_{i_1}-1}v_{i_1}, v_{i_2},  N_\lambda v_{i_2},  \ldots,   N_\lambda ^{\alpha_{i_2}-1}v_{i_2}, \ldots  v_{i_j},  N_\lambda v_{i_j},  \ldots,   N_\lambda ^{\alpha_{i_j}-1}v_{i_j}. $$ 
Notice that these $\alpha_{i_1}+\alpha_{i_2} +\ldots+ \alpha_{i_j} $ vectors form at most $j$ different $\lambda$-Jordan chains. But the longest $j$ different $\lambda$-Jordan chains are of the length $\lambda_1,\lambda_2,\ldots \lambda_j$. Therefore, we have
$$
\alpha_{i_1}+\alpha_{i_2} +\ldots+ \alpha_{i_j} \le  \lambda_1 +\lambda_2 + \ldots + \lambda_j.
$$
Thus, Formula (\ref{equation:Jordan_chain}) is true, and we have proven the necessary part of Proposition \ref{prop:necessary_nonempty_partition_composition}.

To show the sufficient part of the statement, we show that for every partition $\lambda$ and composition $\alpha$ such that $\alpha \unlhd \lambda$, there exists at least one flag $F_\bullet \in \Pet_\lambda$ such that $\alpha_\lambda(F_\bullet)=\alpha$. We proceed by induction on $\ell:=\ell(\alpha)$.

\textit{Base case:} When $\ell =1$, we have $\alpha=(n)$ for some  $n$. By assumption $(n)\unlhd \lambda$, so we must have $\lambda=\alpha=(n)$ and $N_\lambda$ is the regular nilpotent matrix $N_{(n)}$. Consider the flag $F_\bullet$ determined by the ordered basis 
$$
\left(e_{1n},\quad e_{1(n-1)},\quad \ldots\ldots,\quad  e_{12},\quad e_{11}  \right).
$$
For all $i\in \{2,3,\ldots,n\}$, since $N_{(n)} e_{1i}=e_{1(i-1)}$, we have 
$$N _{(n)} F_i =\SPAN\{e_{1(n-1)}, e_{1(n-2)}, \ldots, e_{1(n-i+1)}, e_{1(n-i)} \}.$$ 
Hence, for all $i\in [n-1]$,
$$N _{(n)} F_i \not\subset F_i=\SPAN\{e_{1(n)},  \ldots, e_{1(n-i+2)}, e_{1(n-i+1)} \},$$
while 
$$N _{(n)} F_i \subset F_{i+1}=\SPAN\{e_{1(n)},\ldots, e_{1(n-i+1)}, e_{1(n-i)} \}.$$
And trivially, $N_{(n)} F_n \subset F_n=\CC^n.$

\textit{Inductive step:} Suppose $\ell \ge 2$, and let $\overline{\alpha}=(\alpha_2,\alpha_3,\ldots,\alpha_\ell)$ be the composition obtained by deleting the first entry from $\alpha$ as defined in Section \ref{SSPre}. Since $\alpha \unlhd \lambda$, we have $\KK_{\lambda\alpha} \ge 1$. By Corollary \ref{cor:exist_pos_Kostka}, there exists some  partition $\mu$ such that $\lambda/\mu$ is a horizontal strip, $|\mu|=|\overline{\alpha}|$, and $\KK_{\mu\overline{\alpha}} \ge 1$. The last condition is equivalent to $\overline{\alpha} \unlhd \mu$.

Then, since $\lambda/ \mu$ is a horizontal strip, by Proposition \ref{PRLAHST}, there exists a vector $v$ such that $\lambda(v)=\mu$. We can identify the operator ${N}_\lambda$ 
    restricted to $\CC^n/\langle v \rangle_\lambda$ with the matrix $N_\mu$ in Jordan form by choosing a Jordan basis.
By induction, we assume there exists some $F_\bullet' \in \Pet_{\mu}$ such that $N_\mu F'_i \subset F'_i$ if and only if $i \in PS(\overline{\alpha})$. Suppose that, in $\CC^n/\langle v \rangle_\lambda$, an ordered basis of vectors that determines $F'_\bullet$ is
$$ 
\left(\bar b_1,\quad \bar b_2,\quad \ldots\ldots,\quad  \bar b_{|\overline{\alpha}|-1},\quad \bar b_{|\overline{\alpha}|}  \right).
$$
Consider the quotient map
$$
\pi:  \CC^n \rightarrow \CC^n/\langle v \rangle_\lambda ,
$$
and for every $i\in |\overline{\alpha}|$, let $b_i$ be an vector in the preimage $\pi^{-1}(\overline{b_i})$ in $\CC$. Then, the operator $N_\lambda$ acts on these vectors $b_i$ such that $N_\lambda b_i=N_\mu \bar b_i$. By the inductive assumption, $N_\lambda b_i=0$ if and only if $i \in PS(\overline{\alpha})$. More explicitly,
$$
\SPAN \{N_\lambda b_1,\ N_\lambda b_2,\ \ldots,\ N_\lambda b_i\} \subset \SPAN \{b_1,\ b_2,\ \ldots,\ b_i\} \text{ if and only if } i \in PS(\overline{\alpha}).
$$
Now, we let $F_\bullet$ be the flag determined by the ordered basis.
$$
\left(v,\quad N_\lambda v,\quad \ldots,\quad  N_\lambda ^{\alpha_1-2} v,\quad  N_\lambda ^{\alpha_1-1} v,
\quad
b_1,\quad b_2,\quad \ldots\ldots,\quad  b_{|\overline{\alpha}|-1},\quad b_{|\overline{\alpha}|}   \right).
$$
We can check that $F_\bullet$ satisfies the desired properties: For every $i\in [\alpha_1-1]$, we have $N _\lambda F_i = \SPAN\{N_\lambda v,\ldots, N_\lambda ^{i}v\}$. Hence,
$$N_\lambda F_i\not\subset F_i=\SPAN\{v,\ N_\lambda v,\ \ldots,\ N_\lambda ^{i-1}v\},\text{ while } N_\lambda F_i\subset F_{i+1}=\SPAN\{v,\ N_\lambda v,\ \ldots,\  N_\lambda ^{i}v\}.$$
Also, since $N_\lambda^{\alpha_1}v=0$, we have
$$N _\lambda F_{\alpha_1}=\SPAN\{N_\lambda v,\ldots, N_\lambda ^{\alpha_1-1}v\}\subseteq \SPAN\{v,N_\lambda v,\ldots, N_\lambda ^{\alpha_1}v\}=F_{\alpha_1}.$$  Therefore, $N_\lambda F_i\subset F_i$ if and only if $i\in PS(\alpha)$, and the construction is complete.
\end{proof}

\section{Poincaré polynomial of \texorpdfstring{$\Pet_{\lambda,\alpha}$}{Pet lambda,alpha}}\label{SECTIONPOIN}
In this section, given a partition $\lambda$ and a composition $\alpha$ of the same size, we provide a recursive formula to compute the Poincaré polynomial of $\Pet_{\lambda,\alpha}$.

\subsection{A recursive formula}\label{ssection:51}
We define the coefficient polynomial in this subsection, which we will use to provide the recursive formula for $\Poin(\Pet_{\lambda,\alpha};q^{1/2})$. Recall that for every horizontal strip  $\lambda/\mu$, $I_{\lambda/\mu}$ is the set that contains all the column indices of the boxes in the skew Young diagram $\lambda/\mu$. In other words, $I_{\lambda/\mu}$ is the set of all column indices $j$ such that $\lambda_j^*-\mu_j^*=1$.

\begin{define}\label{def:coe_polynomial}
   The 
    \textbf{coefficient polynomial} $\fN_{\lambda/\mu}(q)$ for the horizontal strip $\lambda/\mu$ is
    \begin{equation}
        \fN_{\lambda/\mu} (q):= \frac{\prod_{i\in I_{\lambda/\mu}}g_i(q)}{q-1}, \text{ where }  g_i(q):=
        \begin{cases}
            q^{\lambda^*_i}, \text{ if $i+1\in I_{\lambda/\mu}$}\\
            q^{\lambda^*_i}-q^{\lambda^*_{i+1}}, \text{ otherwise.}
        \end{cases}
    \end{equation}
\end{define}
By the definition and Proposition \ref{prop:counting_lambda_v_mu} and Corollary \ref{cor:leading_term_card}, the following proposition is immediate.
\begin{pr}\label{prop:f_property}
     Given partitions $\lambda$ and $\mu$ such that $\lambda/\mu$ is a horizontal strip, we have
 \begin{equation}
\fN_{\lambda/\mu}=\frac{\# \{v\in \FFq^n\setminus\{0\}: \lambda(v)=\mu\}}{q-1},
\end{equation}
and $\fN_{\lambda/\mu}$ is a polynomial whose leading term is $q^{\sum_{i=1}^{\ell(\lambda)}i(\lambda_i-\mu_i)-1}$.
\end{pr}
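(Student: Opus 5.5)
The plan is to read both claims directly off Proposition \ref{prop:counting_lambda_v_mu} and Corollary \ref{cor:leading_term_card}, after one small reconciliation with the zero vector.

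\textbf{Step 1: the counting identity.} First we check that $\lambda(v)$ is defined only for nonzero $v$, or equivalently that $v=0$ never satisfies $\lambda(v)=\mu$ when $\lambda/\mu$ is a horizontal strip. If we set $\lambda(0)=\lambda$, since $\langle 0\rangle_\lambda=0$, then $\lambda(0)\neq\mu$, because a horizontal strip requires $\lambda\neq\mu$. Hence
\[
\{v\in\FFq^n:\lambda(v)=\mu\}=\{v\in\FFq^n\setminus\{0\}:\lambda(v)=\mu\}.
\]
Proposition \ref{prop:counting_lambda_v_mu} says this set has cardinality $\prod_{i\in I_{\lambda/\mu}}g_i(q)$. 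Dividing by $q-1$ and comparing with Definition \ref{def:coe_polynomial} gives the displayed identity for $\fN_{\lambda/\mu}$.

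\textbf{Step 2: polynomiality and the leading term.} Corollary \ref{cor:leading_term_card} states that $\prod_{i\in I_{\lambda/\mu}}g_i(q)$ is divisible by $q-1$. Concretely, let $t=\max I_{\lambda/\mu}$; then $g_t(q)=q^{\lambda^*_t}-q^{\lambda^*_{t+1}}$ with $\lambda^*_t>\lambda^*_{t+1}$. So the quotient is a polynomial in $\mathbb{Z}[q]$.

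The same corollary says the product has leading term $q^{\sum_i i(\lambda_i-\mu_i)}$. Dividing a monic polynomial of degree $D$ by the monic linear factor $q-1$ gives a monic polynomial of degree $D-1$. Therefore $\fN_{\lambda/\mu}$ has leading term $q^{\sum_{i=1}^{\ell(\lambda)}i(\lambda_i-\mu_i)-1}$.

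One edge case needs a check: the convention $\lambda/(0)$ when $\ell(\lambda)=1$. There $I_{\lambda/\mu}=\{1,\dots,\lambda_1\}$, and $g_i=q$ for $i<\lambda_1$ while $g_{\lambda_1}=q-1$. This gives $\fN=q^{\lambda_1-1}$, which is consistent with the formula.

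\textbf{Main obstacle.} There is essentially none beyond the zero-vector reconciliation in Step 1 and confirming that $g_t$ really contributes the factor $q-1$. Both follow from facts already proved: the corollary, and $\lambda^*_t>\lambda^*_{t+1}$, which holds since removing the bottom box of column $t$ must leave a partition.
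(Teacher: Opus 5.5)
Your proposal is correct and follows the paper's route: the paper declares this proposition immediate from Definition \ref{def:coe_polynomial}, Proposition \ref{prop:counting_lambda_v_mu}, and Corollary \ref{cor:leading_term_card}, and that is exactly the combination you use. Your extra checks are sound and simply make explicit what the paper leaves implicit: that $v=0$ never satisfies $\lambda(v)=\mu$ because a horizontal strip forces $\lambda\neq\mu$, that $g_t$ with $t=\max I_{\lambda/\mu}$ supplies the factor $q-1$ since $\lambda^*_t>\lambda^*_{t+1}$, and the $\lambda/(0)$ convention.
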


  For every composition $\alpha\not=0$ and integer $i$ such that $i\le \alpha_1$, we let $\alpha^{(i)}$ be the composition obtained by replacing the first entry $\alpha_1$ with $\alpha_1 -i$. Our main result (Theorem \ref{TRF}) states that we have the recursive formula
\begin{equation}\label{EQ43}
  \Poin(\Pet_{\lambda,\alpha};q^{1/2})=
   \sum_{i=1}^{\alpha_1} \sum_{\substack{\mu\in\LAM(\lambda,|\lambda|-i)}} \fN_{\lambda/\mu}(q) \cdot \Poin(\Pet_{\mu,\alpha^{(i)}};q^{1/2}),
\end{equation}
 Here, we take the convention that $$\Poin(\Pet_{(0),(0)};q^{1/2})=\Poin(\Pet_{(0)};q^{1/2})=1.$$
 Recall that we assume $(n)/0$ is a horizontal strip. Therefore, $\Poin(\Pet_{(0),(0)};q^{1/2})$ may show up in the recursion when $\lambda=(n)$. (See Example \ref{equation:PN}.) Trivially, we have $\Poin(\Pet_{(1),(1)};q^{1/2})=1$, so this formula provides us with a practical way to compute the Poincaré polynomials of $\Pet_{\lambda,\alpha}$. We provide the proof in the next subsection. Below is an example.
\begin{ex}
    Let $\lambda=(3,1)$ and $\alpha=(2,2)$. When we take $i=1$ and $\alpha^{(1)}=(1,2)$, the set $\Gamma(\lambda,1)$ is $\{(3),(2,1)\}$. The coefficient polynomials according to these horizontal strips are
   $$
\fN_{(3,1)/(3)} =\frac{q^2-q}{q-1}=q
         \quad\text{   and   }\quad\fN_{(3,1)/(2,1)}=\frac{(q-1)}{q-1}=1.
$$
 When we take $i=2$, $\alpha^{(2)}=(2)$, the set $\Gamma(\lambda,1)$ is $\{(2),(1,1)\}$. The coefficient polynomials according to these horizontal strips are
   $$
\fN_{(3,1)/(2)} =\frac{(q^2-q)(q-1)}{q-1}=q^2-q\quad
         \text{ and }\quad\fN_{(3,1)/(1,1)}=\frac{q(q-1)}{q-1}=q.
$$
  Therefore, by Theorem \ref{TRF},
$$
   \begin{NiceMatrix}[l]
     \Poin(\Pet_{(3,1),(2,2)};q^{1/2})=  &q\cdot  \Poin(\Pet_{(3),(1,2)};q^{1/2}) &+
        1 \cdot \Poin(\Pet_{(2,1),(1,2)};q^{1/2}) \\
        &+
        (q^2-q)\cdot  \Poin(\Pet_{(2),(2)};q^{1/2})
         & +q \cdot \Poin(\Pet_{(1,1),(2)};q^{1/2}) \\
   \end{NiceMatrix}
$$

  Continuing this recursion, we have 
  $$
\Poin(\Pet_{(3,1),(2,2)};q^{1/2}) = q^3+3 q^2+3 q+1
$$
By the definition of the Poincaré polynomial, this computation shows $\Pet_{(3,1),(2,2)}$ has dimension $3$ and contains one maximal dimensional component.
\end{ex}

\begin{remark}
    Notice that if $\alpha=(|\lambda|)$, then $\Pet_{\lambda,|\lambda|}$ becomes $\Pet_\lambda
    $ and $\P(\lambda,(|\lambda|))$ becomes the polynomial $ \Poin(\Pet_{\lambda};q^{1/2})$. As a special case of (\ref{EQ43}), we have
    \begin{equation}\label{EQ44}
\Poin(\Pet_{\lambda};q^{1/2})= \sum _{\mu\in \LAM(\lambda)}\fN_{\lambda/\mu}(q) \cdot \Poin(\Pet_{\mu};q^{1/2}).
    \end{equation}
\end{remark}

\begin{ex}\label{ex:lambda321}
    Let $\lambda=(3,2,1)$. The set $\LAM\bigl((3,2,1)\bigr)$ is
   $$\bigl\{ (3,1,1),(2,2,1),(2,1,1),(3,2),(3,1),(2,2),(2,1)\bigr\}.$$ The coefficient polynomials are
\begin{align*}
\fN_{(3,2,1)/(3,1,1)} =\frac{q^2-1}{q-1}=q\quad 
         &\fN_{(3,2,1)/(2,2,1)}=\frac{(q-1)}{q-1}=1\\
    \fN_{(3,2,1)/(2,1,1)}=\frac{q^2(q-1)}{q-1}=q^2
         \quad &  \fN_{(3,2,1)/(3,2)}=\frac{(q^3-q^2)}{q-1}=q^2\\  \fN_{(3,2,1)/(3,1)}=\frac{q^3(q^2-q)}{q-1}=q^4 
         \quad &\fN_{(3,2,1)/(2,2)}=\frac{(q^3-q^2)(q-1)}{q-1}=q^3-q^2\\  \fN_{(3,2,1)/(2,1)}=\frac{q^3q^2(q-1)}{q-1}=q^5.     
\end{align*}

   Therefore, by (\ref{EQ44}), we have
$$
   \begin{NiceMatrix}[l]
\Poin(\Pet_{(3,2,1)};q^{1/2})=  &q \cdot  \Poin(\Pet_{(3,1,1)};q^{1/2}) &+
       1\cdot \Poin(\Pet_{(2,2,1)};q^{1/2}) \\&+
        q^2 \cdot \Poin(\Pet_{(2,1,1)};q^{1/2})
         & +q^2\cdot \Poin(\Pet_{(3,2)};q^{1/2}) \\&+
           q^4 \cdot \Poin(\Pet_{(3,1)};q^{1/2})
           &+ (q^3-q^2)\cdot \Poin(\Pet_{(2,2)};q^{1/2}) \\
             &+ q^5 \cdot \Poin(\Pet_{(2,1)};q^{1/2}).
   \end{NiceMatrix}
$$

  Continuing this recursion, we have $$
\Poin(\Pet_{(3,2,1)};q^{1/2})=8 q^7+31 q^6+49 q^5+45 q^4+29 q^3+14 q^2+5 q+1.
$$
By the definition of the Poincaré polynomial, this computation shows $\Pet_{(3,2,1)}$ has dimension $7$ and contains $8$ maximal dimensional components.
\end{ex}

The next two corollaries show that we can recover the known results of the Peterson variety and the flag variety using Equation (\ref{EQ44}).

\begin{cor}
    If $\lambda=(n)$ where $n>1$, by Equation (\ref{EQ44}), we have \begin{align*}
\Poin(\Pet_{(n)};q^{1/2})= \sum _{i=0}^{n-1} q^{n-i-1}\Poin(\Pet_{(i)};q^{1/2}).
    \end{align*}
         Inductively, assume we have shown that $\Poin(\Pet_{(i)};q^{1/2})=(q+1)^{i-1}$ for all $1<i <n$. Then, we have
         \begin{align*}
\Poin(\Pet_{(n)};q^{1/2})&= \sum _{i=0}^{n-1} q^{n-i-1}\Poin(\Pet_{(i)};q^{1/2})\\
&=1+\sum _{i=1}^{n-1} q^{n-i-1}\Poin(\Pet_{(i)};q^{1/2})\\
&=1+\sum _{i=1}^{n-1} q^{n-i-1} \cdot (q+1)^{i-1}=(q+1)^{n-1}
         \end{align*}
which is the known result on the Poincaré polynomial of the Peterson variety; see \cite[Theorem 25]{tymoczko2003decomposing}.
\end{cor}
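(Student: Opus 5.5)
The plan is to specialize Equation (\ref{EQ44}) to $\lambda=(n)$, compute every coefficient polynomial explicitly, and then close the recursion by induction on $n$ using a geometric-sum identity.

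First, I will determine $\LAM((n))$. A single row $(n)$ contains a horizontal strip $(n)/\mu$ exactly when $\mu=(i)$ with $0\le i\le n-1$; here $i=0$ is allowed by the convention that $(n)/(0)$ is a horizontal strip. Next I will compute $\fN_{(n)/(i)}$ from Definition \ref{def:coe_polynomial}. Since $\lambda^*=(1,1,\ldots,1)$ has $n$ entries and $\lambda^*_{n+1}=0$, the column set is $I_{(n)/(i)}=\{i+1,\ldots,n\}$. For each $j\in I$ with $j<n$ we have $j+1\in I$, so $g_j=q^{\lambda^*_j}=q$. For $j=n$ we get $g_n=q^{1}-q^{0}=q-1$. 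Hence
$$\fN_{(n)/(i)}(q)=\frac{q^{\,n-i-1}(q-1)}{q-1}=q^{\,n-i-1},$$
which is consistent with the leading-term degree $\sum_k k(\lambda_k-\mu_k)-1=n-i-1$ in Proposition \ref{prop:f_property}. Substituting into (\ref{EQ44}) gives the displayed recursion $\Poin(\Pet_{(n)};q^{1/2})=\sum_{i=0}^{n-1}q^{n-i-1}\Poin(\Pet_{(i)};q^{1/2})$.

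For the induction, the base data are the convention $\Poin(\Pet_{(0)};q^{1/2})=1$ and the fact that $\Pet_{(1)}$ is a point, so its polynomial is $1=(q+1)^0$. The claim $\Poin(\Pet_{(i)};q^{1/2})=(q+1)^{i-1}$ therefore holds for all $1\le i<n$ by hypothesis, including $i=1$. One point needs care: the $i=0$ term in the recursion contributes $q^{n-1}\cdot 1$, not $1$. The sum to evaluate is therefore
$$q^{n-1}+\sum_{i=1}^{n-1}q^{n-1-i}(q+1)^{i-1}.$$
The remaining sum is a finite geometric-type sum. Multiplying it by $(q+1)-q=1$ telescopes it to $(q+1)^{n-1}-q^{n-1}$, so the total is $(q+1)^{n-1}$.

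The only real obstacle is bookkeeping rather than any new idea. One must apply the boundary conventions correctly: $\lambda^*_{n+1}=0$ in $g_n$, and the $(0)$ term in the sum. The term $\Poin(\Pet_{(0)};q^{1/2})=1$ must be given coefficient $q^{n-1}$. As a final sanity check, the result should agree with Tymoczko's formula \cite[Theorem 25]{tymoczko2003decomposing}.
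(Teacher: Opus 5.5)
Your proposal is correct and follows the same route as the paper. You specialize (\ref{EQ44}) to $\lambda=(n)$ using $\LAM((n))=\{(0),(1),\ldots,(n-1)\}$ and $f_{(n)/(i)}(q)=q^{n-i-1}$, then induct using $\sum_{i=1}^{n-1}q^{n-1-i}(q+1)^{i-1}=(q+1)^{n-1}-q^{n-1}$. Your care with the $(0)$ term is warranted: it contributes $q^{n-1}$, so the paper's intermediate line ``$=1+\sum_{i=1}^{n-1}\cdots$'' is a typo for $q^{n-1}+\sum_{i=1}^{n-1}\cdots$ (as Example \ref{equation:PN} confirms), and your version is the correct one.
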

\begin{ex}\label{equation:PN}
    We use Equation (\ref{EQ44}) to compute the first several terms of $ \Poin(\Pet_{(n)};q^{1/2})$.
  \begin{align*}
     \Poin(\Pet_{(1)};q^{1/2})&= \Poin(\Pet_{(0)};q^{1/2})=1,\\
    \Poin(\Pet_{(2)};q^{1/2})&=q^{2-1}\Poin(\Pet_{(0)};q^{1/2})
        +q^{2-2}\Poin(\Pet_{(1)};q^{1/2})=q+1\\
       \Poin(\Pet_{(3)};q^{1/2})&=q^{3-1} \Poin(\Pet_{(0)};q^{1/2})+q^{3-2} \Poin(\Pet_{(1)};q^{1/2})+q^{3-3} \Poin(\Pet_{(2)};q^{1/2})\\&=q^2+q+q+1=q^2+2q+1,\\
       \Poin(\Pet_{(4)};q^{1/2})&=q^{4-1} \Poin(\Pet_{(0)};q^{1/2})+q^{4-2} \Poin(\Pet_{(1)};q^{1/2})+q^{4-3} \Poin(\Pet_{(2)};q^{1/2})\\+&q^{4-4} \Poin(\Pet_{(3)};q^{1/2})=q^3+3q^2+3q+1.
    \end{align*}
\end{ex}
\begin{cor}
    If $\lambda=(1,1,\ldots,1)$ with $n$ entries, then note that $\Pet_\lambda=\FL(n)$. By Theorem  \ref{TRF}, we have $$
\Poin(\FL(n);q^{1/2})
=\frac{q^{n}-1}{q-1}\Poin(\FL(n-1);q^{1/2})
        $$
        Using induction on $n$, we deduce that
        $
\Poin(\FL(n);q^{1/2})=\prod_{i=1}^n \frac{q^{i}-1}{q-1}
        $, recovering the well-known formula for the Poincaré polynomial of the flag variety.
\end{cor}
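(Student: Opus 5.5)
The plan is to specialize the recursion (\ref{EQ44}), which is the case $\alpha=(|\lambda|)$ of Theorem \ref{TRF}, to $\lambda=(1,1,\ldots,1)=(1^n)$, and then induct on $n$.

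First I will record the geometric identification. For $\lambda=(1^n)$ the matrix $N_\lambda$ is the zero matrix, so the condition $N_\lambda F_i\subset F_{i+1}$ holds for every flag. Hence $\Pet_{(1^n)}=\FL(n)$ for every $n\ge 1$, and in particular $\Pet_{(1^{n-1})}=\FL(n-1)$.

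Next I will determine the index set $\LAM\bigl((1^n)\bigr)$ in (\ref{EQ44}). The Young diagram of $(1^n)$ has a single column, and a horizontal strip contains at most one box in each column. So the only $\mu$ with $(1^n)/\mu$ a horizontal strip is $\mu=(1^{n-1})$; when $n=1$ this is $\mu=(0)$, by the stated convention. Thus the sum in (\ref{EQ44}) has exactly one term.

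For that term, $I_{\lambda/\mu}=\{1\}$ and $2\notin I_{\lambda/\mu}$. Since $\lambda^*_1=n$ and $\lambda^*_2=0$, Definition \ref{def:coe_polynomial} gives
\[
g_1(q)=q^n-q^0=q^n-1, \qquad \fN_{(1^n)/(1^{n-1})}(q)=\frac{q^n-1}{q-1}.
\]
Substituting into (\ref{EQ44}) yields
\[
\Poin(\FL(n);q^{1/2})=\frac{q^n-1}{q-1}\,\Poin(\FL(n-1);q^{1/2}).
\]

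Finally I will induct on $n$. The base case is $n=1$: then $\fN_{(1)/(0)}=(q-1)/(q-1)=1$, and the convention $\Poin(\Pet_{(0)};q^{1/2})=1$ gives $\Poin(\FL(1);q^{1/2})=1=\frac{q-1}{q-1}$. Telescoping the recursion then gives
\[
\Poin(\FL(n);q^{1/2})=\prod_{i=1}^n\frac{q^i-1}{q-1}.
\]

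I do not expect a real obstacle. The only points needing care are that $\LAM\bigl((1^n)\bigr)$ really is a singleton, and the convention $\lambda^*_{2}=0$, which makes $g_1$ equal to $q^n-1$ rather than $q^n-q$.
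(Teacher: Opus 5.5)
Your proposal is correct and follows the paper's own argument: specialize the $\alpha=(|\lambda|)$ case (\ref{EQ44}) of Theorem \ref{TRF} to $\lambda=(1^n)$, obtain the one-term recursion with coefficient $\frac{q^n-1}{q-1}$, and induct on $n$. You also supply the details the paper leaves implicit, namely that $\Gamma\bigl((1^n)\bigr)=\{(1^{n-1})\}$ and that $g_1(q)=q^{n}-1$ because $\lambda^*_2=0$.
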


\subsection{Proof of Theorem \ref{TRF}}
In this subsection, we provide a proof of Theorem \ref{TRF}. We use the counting method developed by Escobar, Precup, and Shareshian in \cite{Escobar_Precup_Shareshian_2026}. As a special case of their result, we have the following.
\begin{pr}[Theorem 3.6 of \cite{Escobar_Precup_Shareshian_2026}]\label{prop:q-counting}
 Given partition $\lambda$ and composition $\alpha$ such that $|\lambda|=|\alpha|$, we have
    \begin{equation}\label{EQ54}
    \Poin(\Pet_{\lambda,\alpha};q^{1/2}) = \# \ \Pet_{\lambda,\alpha}(\FFq)
\end{equation}
where $\Pet_{\lambda,\alpha}(\FFq)$ is the set of all flags $F_\bullet=(F_1 \subset \cdots \subset F_n=\FFq^n )$
with $\dim F_i=i$ such that 
$N_\lambda F_i\subset F_{i}$ if $i \in PS(\alpha)$ and $N_\lambda F_i\subset F_{i+1}$ otherwise.
\end{pr}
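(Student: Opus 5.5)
The plan is to derive the identity from an affine paving that is defined over $\mathbb{Z}$ and whose cell dimensions do not depend on the field. Each cell $\mathbb{A}^d$ then contributes $t^{2d}$ to $\Poin(\Pet_{\lambda,\alpha};t)$ over $\CC$ and exactly $q^d$ points over $\FFq$. Summing over cells, both sides of (\ref{EQ54}) become $\sum_{\text{cells}} q^{\dim}$.

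\emph{Step 1: put $N_\lambda$ in highest form.} Following Tymoczko \cite{tymoczko2006linear}, I first conjugate $N_\lambda$ by a permutation matrix $P$ so that $X:=PN_\lambda P^{-1}$ is in highest form. This $X$ is again a $0/1$ matrix, so it is defined over $\mathbb{Z}$, and left multiplication by $P$ gives a bijection $\Hess(N_\lambda,h_\alpha)\cong \Hess(X,h_\alpha)$ over any field $\K$. Since $P$ is a permutation matrix, this bijection is compatible with the $\FFq$-points. It therefore suffices to treat $\Hess(X,h_\alpha)$.

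\emph{Step 2: decompose along Schubert cells.} I intersect with the Schubert cells $C_w=U_w wB/B$ for $w\in S_n$. Here $U_w\cong \mathbb{A}^{\ell(w)}$ is the unipotent group whose entries are the free coordinates $x_{ij}$ of $w$-shaped row-reduced matrices. The Schubert cells partition $\FL(n)(\K)$ for every field $\K$, with $\#C_w(\FFq)=q^{\ell(w)}$. The condition $F_\bullet\in \Hess(X,h_\alpha)$ means that $(uw)^{-1}X(uw)$ lies in the Hessenberg space $H(h_\alpha)$. This is a system of polynomial equations in the $x_{ij}$ with integer coefficients.

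\emph{Step 3: show each piece is an affine space.} This is the heart of the argument. I will rerun Tymoczko's proof that, for $X$ in highest form, the equations can be ordered so that each one is linear in a new pivot variable $x_{ij}$ with coefficient $\pm 1$, the remaining terms being polynomials in earlier variables. Each equation then solves for one variable, and the solution set $\Hess(X,h_\alpha)\cap C_w$ is isomorphic to $\mathbb{A}^{d_w}$. The dimension is $d_w=\ell(w)-\#\{\text{pivot equations}\}$. The set of pivots is determined combinatorially by $w$, $\lambda$ and $h_\alpha$, so $d_w$ is independent of $\K$. The only place the field could enter is the pivot coefficient, and it is a unit in $\mathbb{Z}$ (a $1$ from the highest-form entry). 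The argument thus goes through verbatim over $\FFq$ in any characteristic.

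\emph{Step 4: assemble.} Over $\CC$, the cells $\{\Hess(X,h_\alpha)\cap C_w\}$ form an affine paving, since closures of Schubert cells are unions of Schubert cells. The cohomology is therefore free with Betti numbers $b_{2d}=\#\{w:d_w=d\}$ and vanishing odd Betti numbers, which gives
\[
\Poin(\Pet_{\lambda,\alpha};q^{1/2})=\sum_w q^{d_w}.
\]
Over $\FFq$ the same cells partition $\Pet_{\lambda,\alpha}(\FFq)$, so $\#\Pet_{\lambda,\alpha}(\FFq)=\sum_w q^{d_w}$, which proves (\ref{EQ54}).

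I expect the main obstacle to be Step 3: checking that Tymoczko's triangularity argument has pivot coefficients that are integer units, with no step that divides by a characteristic-dependent constant. If that becomes awkward to verify directly, the fallback is to cite the Escobar--Precup--Shareshian formulation, which already packages this field-independence.
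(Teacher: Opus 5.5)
The paper does not prove this statement. It quotes it as Theorem 3.6 of Escobar--Precup--Shareshian and uses it as a black box to reduce Theorem \ref{TRF} to a point count over $\FFq$.

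Your proposal instead reconstructs the standard mechanism behind such identities: pass to a highest-form representative, intersect with Schubert cells, and show the resulting affine paving is defined over $\mathbb{Z}$ with field-independent cell dimensions, so both sides of (\ref{EQ54}) equal $\sum_w q^{d_w}$. This route is sound. It gives a self-contained argument, valid for every Hessenberg function and not only $h_\alpha$, that shows why the identity holds. The citation buys brevity and offloads exactly the verification you defer to Step 3.

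When you carry out Step 3, note that for nonempty cells unit pivots are all you need. After the integral substitution for the pivot variables, each remaining equation is a polynomial in $\mathbb{Z}[\text{free variables}]$. By Tymoczko's theorem it vanishes on all of $\CC^{d_w}$, so it is identically zero and remains zero over $\FFq$.

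What your sketch omits is the empty cells. Emptiness over $\CC$ does not transfer to $\FFq$ for free, since a system inconsistent over $\CC$ can become consistent mod $p$. Grading handles this. For the highest-form representative there is a cocharacter $\gamma$ of the diagonal torus with strictly decreasing weights and $\gamma(t)X\gamma(t)^{-1}=t^kX$; assign weights column by column in the base filling. This makes every entry of $(uw)^{-1}X(uw)$ weighted homogeneous in the $x_{ij}$, all of which have positive weight. An entry with nonzero constant term is therefore identically equal to the corresponding entry of $w^{-1}Xw$, which is $1$. So $C_w\cap\Hess(X,h_\alpha)$ is nonempty over a field exactly when the coordinate flag $wB$ lies in $\Hess(X,h_\alpha)$, a condition independent of the field.
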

In the rest of this subsection, we fix $\lambda$ to be a partition and $\alpha$ to be a composition such that $|\lambda|=|\alpha|>0$. For every nonzero vector $v\in \FFq^n$,  we define $\ZZ_v$ as the set 
    $$\ZZ_v:= \bigl\{F_\bullet \in  \Pet_{\lambda,\alpha}(\FFq): F_1=\SPAN \{v\}\bigr\}$$
For nonzero vectors $v_1$ and $v_2$ in $\FFq^n$ such that both $\ZZ_{v_1}$ and $\ZZ_{v_2}$ are nonempty, if $v_1=cv_2$ for some nonzero $c\in \FFq^\times$, then  $\ZZ_{v_1}$ and $\ZZ_{v_2}$ are equal sets; otherwise, $\ZZ_{v_1}$ and $\ZZ_{v_2}$ are disjoint. Therefore, every flag $F_\bullet\in \Pet_{\lambda,\alpha}(\FFq)$ appears exactly $q-1$ times in the disjoint union $\bigsqcup_{v\in \FFq^n} \ZZ_v$. Hence,
\begin{equation}\label{equation:Zv_sum}
      \# \Pet_{\lambda,\alpha}(\FFq) = \sum _{v\in \FFq^n\setminus\{0\}}\frac{\# \ZZ_v}{q-1}.
\end{equation}
In the next lemma, we provide the criteria for $\ZZ_v$ to be nonempty and, when $\ZZ_v$ is nonempty. We also compute the cardinality of $\ZZ_v$. We take the convention that $\Pet_{(0),(0)}(\FFq)$ is the set containing the zero vector space. Hence, $\# \Pet_{(0),(0)}(\FFq)=1$.

\begin{lem}\label{lem:bijection_Uv}
    For every vector $v\in \FFq^n\setminus\{0\}$,  the set 
    $\ZZ_v$
is nonempty if and only if $\dim \langle v\rangle _\lambda\le \alpha_1$. When $\ZZ_v$ is not empty, its cardinality equals $\# \Pet_{\mu,\beta}(\FFq)$, where $\mu=\lambda(v)$ and $\beta=\alpha^{(\dim \langle v\rangle _\lambda)}$.
\end{lem}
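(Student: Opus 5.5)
Let $d:=\dim\langle v\rangle_\lambda$, $W:=\FFq^n/\langle v\rangle_\lambda$ and $\pi:\FFq^n\to W$ the quotient map. Choose a Jordan basis of $N_\lambda|_W$ so that it becomes $N_\mu$ with $\mu=\lambda(v)$. The plan is to show that every $F_\bullet\in\ZZ_v$ has a rigid initial segment, and that the remaining steps of the flag correspond exactly to a flag in $\Pet_{\mu,\beta}(\FFq)$, where $\beta=\alpha^{(d)}$.

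\emph{Step 1: the initial segment is forced.} Let $F_\bullet\in\ZZ_v$. For $i<\alpha_1$ the index $i$ is not a partial sum of $\alpha$, so we only know $N_\lambda F_i\subset F_{i+1}$. Arguing inductively as in the proof of Lemma \ref{lem:form_of_flag}, as long as $N_\lambda^{i}v\notin F_i$ we get
$$F_{i+1}=\SPAN\{v,N_\lambda v,\ldots,N_\lambda^{i}v\}.$$
By Proposition \ref{prop:dim_of_cycle_subspace}, the vectors $v,\ldots,N_\lambda^{d-1}v$ are independent and $N_\lambda^dv=0$. Hence $F_j=\SPAN\{v,\ldots,N_\lambda^{j-1}v\}$ for all $j\le\min(d,\alpha_1)$.

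Suppose $d>\alpha_1$. Then $F_{\alpha_1}$ is spanned by $v,\ldots,N_\lambda^{\alpha_1-1}v$, and $N_\lambda^{\alpha_1}v\neq0$ does not lie in it. But $\alpha_1\in PS(\alpha)$ requires $N_\lambda F_{\alpha_1}\subset F_{\alpha_1}$, a contradiction. So $\ZZ_v\neq\emptyset$ forces $d\le\alpha_1$, and in that case $F_d=\langle v\rangle_\lambda$ for every $F_\bullet\in\ZZ_v$.

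\emph{Step 2: the bijection.} Assume $d\le\alpha_1$ and send $F_\bullet\in\ZZ_v$ to $F'_\bullet$ with $F'_j:=\pi(F_{d+j})=F_{d+j}/\langle v\rangle_\lambda$. Since every $F_{d+j}$ contains $\langle v\rangle_\lambda$, this is a bijection between flags with $F_d=\langle v\rangle_\lambda$ and complete flags in $W$. Because $\langle v\rangle_\lambda$ is $N_\lambda$-stable and contained in both subspaces, the condition $N_\lambda F_{d+j}\subset F_{d+k}$ is equivalent to $N_\mu F'_j\subset F'_k$.

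It remains to match the Hessenberg conditions index by index, with $j=i-d$.
\begin{itemize}
\item For $j\ge1$ we have $d+j\in PS(\alpha)$ if and only if $j\in PS(\beta)$. This holds because $PS(\beta)=\{s-d:s\in PS(\alpha),\,s>d\}$, and when $d=\alpha_1$ the first entry of $\beta$ is $0$, which simply deletes it.
\item For $i<d$, the conditions on $F_i$ hold automatically by the explicit form from Step 1.
\item For $i=d$, the space $\langle v\rangle_\lambda$ is $N_\lambda$-stable, so the condition holds whether or not $d\in PS(\alpha)$.
\end{itemize}
Conversely, given $F'_\bullet\in\Pet_{\mu,\beta}(\FFq)$, set $F_j:=\SPAN\{v,\ldots,N_\lambda^{j-1}v\}$ for $j\le d$ and $F_{d+j}:=\pi^{-1}(F'_j)$. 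The same equivalences show this flag lies in $\ZZ_v$. Nonemptiness of $\ZZ_v$ when $d\le\alpha_1$ then follows because $\Pet_{\mu,\beta}(\FFq)$ is nonempty; for example, any flag in the Springer fiber of $N_\mu$ lies in it.

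\emph{Edge cases and main obstacle.} The delicate part is the bookkeeping at the boundary index $d$ and in degenerate situations. If $d=\alpha_1$, then $\beta$ has leading entry $0$ and must be read as $\overline{\alpha}$. If $d=n$, then $W=0$ and we use the convention $\#\Pet_{(0),(0)}(\FFq)=1$. The partial-sum shift must be checked carefully in both cases. Beyond that, the only substantive input is the rigidity argument of Step 1.
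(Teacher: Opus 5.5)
Your proposal is correct and follows essentially the same route as the paper. Both use the map $F_\bullet\mapsto (F_{d+j}/\langle v\rangle_\lambda)_j$ onto $\Pet_{\lambda(v),\alpha^{(d)}}(\FFq)$, with inverse obtained by prepending the cyclic flag $\SPAN\{v,\ldots,N_\lambda^{j-1}v\}$, and both handle $d=n$ through the convention $\#\Pet_{(0),(0)}(\FFq)=1$.

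The differences are small, and they make your version more complete than the paper's:
\begin{itemize}
\item You derive $d\le\alpha_1$ and $F_d=\langle v\rangle_\lambda$ directly, instead of going through $\alpha_\lambda(F_\bullet)$ and Remark \ref{remark:compare_htv_alpha1}. The paper uses $F_d=\langle v\rangle_\lambda$ without stating it when it says $N_\lambda$ induces $N_{\lambda(v)}$ on $\FFq^n/F_d$.
\item You justify that $\Pet_{\mu,\beta}(\FFq)$ is nonempty by noting it contains the Springer fiber of $N_\mu$. The paper needs this for the ``if'' direction but leaves it implicit.
\end{itemize}
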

\begin{proof}
    For every flag $F_\bullet \in  \Pet_{\lambda,\alpha}(\FFq)$ such that $F_1=\SPAN \{v\}$, we first show that  $\dim \langle v\rangle _\lambda\le \alpha_1$. Suppose $\beta=\alpha_\lambda(F_\bullet)$. Then $\beta$ has to be a refinement of $\alpha$, and hence $\beta_1\le \alpha_1$. By Remark \ref{remark:compare_htv_alpha1}, we further have $\dim \langle v\rangle _\lambda=\beta_1\le \alpha_1$.

Now, suppose $v$ is a nonzero vector such that $\dim \langle v\rangle _\lambda= \ell \le \alpha_1$. We first deal with one special case: when $\lambda=\alpha=(n)$, and $v$ is a vector in $\FFq^n$ such that $\dim \langle v\rangle _\lambda=n$. Then, we have $\lambda(v)=\alpha^{(n)}=(0)$.
since $\alpha_{\lambda}(v)=(n)$, by Lemma \ref{lem:form_of_flag} $\ZZ_v$ can only contain one flag determined by the ordered basis
$$
\left( v,\ {N_{(n)}}v,\ \ldots, \  {N_{(n)}}^{n-1}v \right).
$$
Therefore, in this case, $\ZZ_v$ is not empty, and its cardinality equals $\# \Pet_{(0),(0)}(\FFq)$, which is $1$.

In other cases, notice that we must have $|\lambda(v)|=|\alpha^{(\ell)}|>0$. Consider the map $\psi$
$$
\begin{array}{rcl}
\psi:\ZZ_v
&\to& \Pet_{\lambda(v),\alpha^{(\ell)}}(\FFq) \\
F_{\bullet} & \mapsto & \big( F_{\ell+1}/F_{\ell}
\subseteq 
\cdots 
\subseteq
F_{n}/F_{\ell}
\big).
\end{array}
$$
To check that this map is well-defined, notice that the linear transformation of $\FFq^{n}/F_\ell$ induced by $N_\lambda$ is $N_{\lambda(v)}$. And for every $i$ such that $i\le n-\ell$, we have $$N_{\lambda(v)}(F_{\ell+i}/F_\ell)=N_\lambda(F_{\ell+i}/F_\ell)\subseteq F_{\ell+i}/F_\ell, $$
if $\ell+i\in PS(\alpha)$; otherwise,
$$N_{\lambda(v)}(F_{\ell+i}/F_\ell)=N_\lambda(F_{\ell+i}/F_\ell)\subseteq F_{\ell+i+1}/F_\ell. $$
Therefore, $\big( F_{\ell+1}/F_{\ell}
\subseteq 
\cdots 
\subseteq
F_{n}/F_{\ell}
\big)$ is a flag in $\Pet_{\lambda(v),\alpha^{(\ell)}}(\FFq)$.

The map $\psi$ is clearly injective. Now, we show that this map is surjective by constructing an inverse map. For each flag $F'_\bullet$ in $\Pet_{\lambda(v),\alpha^{(\ell)}}$, we define a flag $F_{\bullet} \in \ZZ_v$ by letting
$$
F_{i} = \SPAN\{  v, N_\lambda v, \ldots, N_\lambda ^{i-1}v\} 
\qquad\text{for $i \in \{1, \ldots, \ell\}$};
$$
And for $i \in \{\ell + 1, \ldots, n\}$, $F_i$ is the space such that
$F_\ell \subseteq F_{i}$ and $
F_{i}/F_\ell = F_{i-\ell}'$.

Now, we check that $F_\bullet \in \ZZ_v$. For every $i<\ell$, we have $N _\lambda F_i = \SPAN\{N_\lambda v,\ldots, N_\lambda ^{i}v\}$. Hence,
$$N_\lambda F_i\not\subset F_i=\SPAN\{v,\ N_\lambda v,\ \ldots,\ N_\lambda ^{i-1}v\},\text{ while } N_\lambda F_i\subset F_{i+1}=\SPAN\{v,\ N_\lambda v,\ \ldots,\  N_\lambda ^{i}v\}.$$
Also, $N _\lambda F_{\ell} = \SPAN\{N_\lambda v,\ \ldots,\  N_\lambda ^{\ell-1}v\} \subseteq \SPAN\{v,\ N_\lambda v,\ \ldots,\  N_\lambda ^{\ell-1}v\}  = F_\ell$.

For every $i$ such that $i\le n-\ell$, we have $$N_{\lambda}(F_{\ell+i})=N_\lambda (F_i' + F_\ell)=N_\lambda(F_i')+ N_\lambda(F_\ell)= N_{\lambda(v)}(F_i')+ F_\ell\subseteq  F_i'+F_\ell =F_{\ell+i}$$
if $\ell+i\in PS(\alpha)$; otherwise,
$$N_{\lambda}(F_{\ell+i})=N_\lambda (F_i' + F_\ell)=N_\lambda(F_i')+ N_\lambda(F_\ell)= N_{\lambda(v)}(F_i')+ F_\ell\subseteq  F_{i+1}'+F_\ell =F_{\ell+i+1}.$$
Therefore, we have constructed a flag $F_\bullet\in \Pet_{\lambda,\alpha}(\FFq)$ with $\psi(F_\bullet)=F_\bullet'$ and $F_1$ spanned by $v$. So, $\psi$ has an inverse map and $\# \ZZ_v
=\# \Pet_{\lambda(v),\alpha^{(\ell)}}(\FFq)$.
\end{proof}

Now we are ready to prove Theorem \ref{TRF}.
\begin{proof}[Proof of Theorem \ref{TRF}]
By Proposition \ref{prop:q-counting}, to prove the statement on the Poincaré polynomial, it suffices to prove we have the counting result $$\# \Pet_{\lambda,\alpha}(\FFq)
=\sum_{i=1}^{\alpha_1}\sum_{\substack{\mu\in\LAM(\lambda,|\lambda|-i)}}f_{\lambda/\mu}(q)\cdot \# \Pet_{\mu,\alpha^{(i)}}(\FFq)
$$
To show this, by Equation (\ref{equation:Zv_sum}) and Lemma \ref{lem:bijection_Uv}, we have 
\begin{equation}\label{equation:weak_partition}
   \# \Pet_{\lambda,\alpha}(\FFq) = \sum _{\substack{v\in \FFq^n:\\ \dim \langle v\rangle _\lambda \le \alpha_1}}\frac{\# \ZZ_v}{q-1}=\sum_{i=1}^{\alpha_1}\sum _{\substack{v\in \FFq^n:\\ \dim \langle v\rangle _\lambda=i}}\frac{\# \ZZ_v}{q-1}.
\end{equation}
 By Lemma \ref{lem:bijection_Uv}, for each integer $i\le \alpha_1$ and every nonzero vector $v$ such that $\dim \langle v\rangle _\lambda=i$, we have
\begin{equation}\label{equation:bijection_corollary}
    \# \ZZ_v =  \# \Pet_{\lambda(v),\alpha^{(i)}}(\FFq), 
\end{equation}
Now, substituting (\ref{equation:bijection_corollary}) into the right-hand-side of (\ref{equation:weak_partition}), we have
\begin{equation}\label{equation:second_form}
 \# \Pet_{\lambda,\alpha}(\FFq) = 
 \sum_{i=1}^{\alpha_1}\sum _{\substack{v\in \FFq^n:\\ \dim \langle v\rangle _\lambda=i}}  \frac{\# \Pet_{\lambda(v),\alpha^{(i)}}(\FFq^n)}{q-1}.
\end{equation}
While, by Proposition \ref{PRLAHST}, for every partition $\mu$, there exists some vector $v$ such that $\lambda(v)=\mu$ if and only if $\lambda/\mu$ is a horizontal strip. Therefore, we can rewrite (\ref{equation:second_form}) to be
\begin{equation}
     \# \Pet_{\lambda,\alpha}(\FFq) = 
 \sum_{i=1}^{\alpha_1}\sum _{\substack{\mu\in\LAM(\lambda,|\lambda|-i)}} \frac{\#\{v\in \FFq^n:\lambda(v)=\mu\}}{q-1}
 \cdot \# \Pet_{\mu,\alpha^{(i)}}(\FFq),
\end{equation}
The desired formula now follows from Proposition \ref{prop:f_property}.
\end{proof}

\subsection{Dimension formula and number of maximal dimensional components}\label{SS64}
Recall Theorem \ref{thm:dimension_number_components} states that, given the partition $\lambda$ and composition $\alpha$ such that $|\lambda|=|\alpha|$ and $\alpha\unlhd \lambda$, The variety $\Pet_{\lambda,\alpha}$ has $\KK_{\lambda \alpha}$ different maximum dimensional components, which are of the dimension
    \begin{equation}
   \dim\left(\Pet_{\lambda,\alpha} \right) =\sum _{i=1}^{\ell (\lambda)} i\lambda_i - \ell (\alpha)
   \end{equation}

 Below is an example.
\begin{ex}
    Let $\lambda=(4,2)$ and $\alpha=(2,1,3)$, note that $\alpha\unlhd \lambda$. As checked in Example \ref{ex:lambda42_alpha_213}, $\KK_{(4,2)(2,1,3)}=2$. Hence, by Theorem \ref{thm:dimension_number_components}, $\Pet_{(4,2)(2,1,3)}$ has dimension $4+2\times 2-3=5$ and $2$ maximal dimensional components.
\end{ex}

\begin{remark}
    When $\alpha=(1,1,\ldots,1)$, $\Pet_{\lambda,\alpha}$ is the Springer fiber $\BB_\lambda$. Then, this theorem recovers a weak version of the result that $\BB_\lambda$ has dimension $\sum_{i=1}^{\ell(\lambda)}(i-1)\lambda_i$ and its components are in bijection with standard Young tableaux of shape $\lambda$ \cite{spaltenstein1976fixed}.
\end{remark}
 We will use induction to prove \ref{thm:dimension_number_components}. Before that, we need one intermediate lemma.
\begin{lem}\label{lem:dim_inequality}
    Given any partition $\lambda\not=0$, suppose for all compositions $\beta\in \AA_\lambda$, we have $\dim \Pet_{\lambda,\beta}=\sum_{i=1}^{\ell(\lambda)}i\lambda_i-\ell(\beta)$. Then, for every composition $\alpha$ such that $|\alpha|=|\lambda|$ and $\alpha$ is not dominated by $ \lambda$, we have
\begin{equation}
    \dim\Pet_{\lambda,\alpha}<\sum_{i=1}^{\ell(\lambda)}i\lambda_i-\ell(\alpha).
\end{equation}
In other words, for all the compositions $\alpha$ such that $|\alpha|=|\lambda|$, we have $\dim \Pet_{\lambda,\alpha}\le\sum_{i=1}^{\ell(\lambda)}i\lambda_i-\ell(\alpha)$, and the equality holds if and only if $\alpha\in \AA_\lambda$.
    \end{lem}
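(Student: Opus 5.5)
We will use the admissible decomposition of Theorem \ref{thm:admissible_decomposition} together with the hypothesis on admissible pieces. Let $\alpha$ satisfy $|\alpha|=|\lambda|$ and $\alpha\notin\AA_\lambda$. By Theorem \ref{thm:admissible_decomposition} (equivalently part (2) of Corollary \ref{cor:weak_version_admissible_decomposition}), we have
$$\Pet_{\lambda,\alpha}=\bigcup_{\beta\in(\AA_\lambda\cap\RR_\alpha)^{\CST}}\Pet_{\lambda,\beta}.$$
This is a finite union of closed subvarieties, so its dimension is the maximum of the dimensions of the pieces. The index set is nonempty, because $(1,1,\ldots,1)$ refines $\alpha$ and lies in $\AA_\lambda$. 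By hypothesis each piece has $\dim\Pet_{\lambda,\beta}=\sum_i i\lambda_i-\ell(\beta)$.

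It therefore suffices to show that $\ell(\beta)>\ell(\alpha)$ for every $\beta\in\AA_\lambda\cap\RR_\alpha$.
\begin{itemize}
\item Since $\beta\preceq\alpha$, we have $PS(\alpha)\subseteq PS(\beta)$, and hence $\ell(\beta)\ge\ell(\alpha)$.
\item If equality held, then $PS(\beta)=PS(\alpha)$, so $\beta=\alpha$. This contradicts $\alpha\notin\AA_\lambda$.
\end{itemize}
Hence $\ell(\beta)\ge\ell(\alpha)+1$ and $\dim\Pet_{\lambda,\beta}\le\sum_i i\lambda_i-\ell(\alpha)-1$ for each piece. Taking the maximum over the finitely many $\beta$ gives the strict inequality.

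The combined statement then follows. For $\alpha\in\AA_\lambda$, equality holds by hypothesis. For $\alpha\notin\AA_\lambda$, the inequality is strict by the argument above. Together these give $\dim\Pet_{\lambda,\alpha}\le\sum_i i\lambda_i-\ell(\alpha)$ for all $\alpha$, with equality exactly when $\alpha\in\AA_\lambda$.

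The only point needing care is that dimension of a finite union of closed subvarieties equals the maximum of the dimensions. This is standard, since each $\Pet_{\lambda,\beta}$ is a closed Hessenberg variety. There is no real obstacle: the content lies entirely in the decomposition theorem and the length comparison.
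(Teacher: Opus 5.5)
Your proposal is correct and follows the paper's own proof. Both use the admissible decomposition to reduce $\dim\Pet_{\lambda,\alpha}$ to a maximum over $(\AA_\lambda\cap\RR_\alpha)^{\CST}$, and both get strictness from $\ell(\beta)>\ell(\alpha)$, since each such $\beta$ is a refinement of $\alpha$ different from $\alpha$. Your check that the index set is nonempty (via $(1,1,\ldots,1)$) and your explicit treatment of the ``in other words'' clause are small details the paper leaves implicit.
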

\begin{proof}
    Suppose $\alpha$ is a composition such that $|\alpha|=|\lambda|$ and $\alpha$ is not dominated by $\unlhd \lambda$. By Theorem \ref{thm:admissible_decomposition}, we have the admissible decomposition
    $$
\Pet_{\lambda,\alpha}=\bigcup_{\beta\in \left(\AA_\lambda\cap \RR_\alpha\right)^{\CST}} \Pet_{\lambda,\beta}.
    $$
    Then, we have
    $$
    \dim \Pet_{\lambda,\alpha}= \max_{\beta\in \left(\AA_\lambda\cap \RR_\alpha\right)^{\CST}} \Pet_{\lambda,\beta}. 
    $$
    By assumption, for  every $\beta\in \left(\AA_\lambda\cap \RR_\alpha\right)^{\CST}$, we have $\dim \Pet_{\lambda,\beta}=\sum_{i=1}^{\ell(\lambda)}i\lambda_i-\ell(\beta)$. Substituting this into the above equation yields
    \begin{equation}
          \dim \Pet_{\lambda,\alpha}= \max_{\beta\in \left(\AA_\lambda\cap \RR_\alpha\right)^{\CST}}\left(\sum_{i=1}^{\ell(\lambda)}i\lambda_i-\ell(\beta) \right).
    \end{equation}
    Now, to obtain the desired inequality, it suffices to show that for all $\beta\in \left(\AA_\lambda\cap \RR_\alpha\right)^{\CST}$, we have $\ell(\beta)>\ell(\alpha)$. Notice that, by assumption $\alpha\not\in \AA_\lambda$, every composition $\beta\in \left(\AA_\lambda\cap \RR_\alpha\right)^{\CST}$ is a refinement of $\alpha$ such that $\beta\not=\alpha$. Therefore, we must have $\ell(\beta)>\ell(\alpha)$ and $\dim \Pet_{\lambda,\alpha}< \sum_{i=1}^{\ell(\lambda)}i\lambda_i-\ell(\alpha)$.
\end{proof}
Now we are ready to prove Theorem \ref{thm:dimension_number_components} using Theorem \ref{TRF}. By definition, the leading degree of $ \Poin(\Pet_{\lambda,\alpha};q^{1/2})$ is equal to the dimension of $\Pet_{\lambda,\alpha} $, and the leading coefficient of $ \Poin(\Pet_{\lambda,\alpha};q^{1/2})$ equals the number of maximal dimensional components of $\Pet_{\lambda,\alpha}$. Hence, Theorem \ref{TRF} provides a way to prove the claim about the dimension and maximal dimensional components of $\Pet_{\lambda,\alpha}$. Below, for a polynomial $g$, we let $\lc g$ be the leading coefficient of $g$.
\begin{proof}[Proof of Theorem \ref{thm:dimension_number_components}]
    Let the notation be the same as the statement of Theorem \ref{thm:dimension_number_components}.
By the definition of $ \Poin(\Pet_{\lambda,\alpha};q^{1/2})$, it suffices to show that its degree is $\sum _{i=1}^{\ell (\lambda)} i\lambda_i - \ell (\alpha)$ and that its leading coefficient is $ \KK_{\lambda \alpha}$. Let $n=|\lambda|$. We finish the proof by induction on $n$.

\textit{Base case:} When $n=0$, we must have $\lambda=\alpha=0$. By our convention,  $\P((0),(0))$ is $1$. Therefore,   $\deg \P((0),(0))$ equals $0$ and $\lc \P((0),(0))$ equals $\KK_{(0)(0)}=1$.
   
       \textit{Inductive step:} When $n>0$, by Theorem \ref{TRF} , we have
       \begin{align}\label{equation:5.25}
 \begin{split}
     &\deg \Poin(\Pet_{\lambda,\alpha};q^{1/2})\\&=\deg \left (
   \sum_{i=1}^{\alpha_1} \sum_{\substack{\mu\in\LAM(\lambda,|\lambda|-i)}} f_{\lambda/\mu}(q) \cdot \Poin(\Pet_{\mu,\alpha^{(i)}};q^{1/2}) \right)\\&=
\max_{i \in [\alpha_{1}]}
\max_{\mu\in\LAM(\lambda,|\lambda|-i)}
\left(  \deg f_{\lambda/\mu}(q) \cdot    \Poin(\Pet_{\mu,\alpha^{(i)}};q^{1/2}) \right)\\
&=\max_{i \in [\alpha_{1}]}
\max_{\mu\in\LAM(\lambda,|\lambda|-i)}
\left(  \deg f_{\lambda/\mu}(q) + \deg    \Poin(\Pet_{\mu,\alpha^{(i)}};q^{1/2}) \right).
 \end{split}
       \end{align}
For every integer $i\in [\alpha_1]$ and $\mu\in \LAM(\lambda,|\lambda|-i)$, by Proposition \ref{prop:f_property},
\begin{equation}\label{equation:5.26}
    \deg f_{\lambda/\mu}(q) = \sum _{j=1}^{\ell(\lambda)} j(\lambda_j-\mu_j)-1.
\end{equation}
Meanwhile, by the inductive assumption and Lemma \ref{lem:dim_inequality},  we have
\begin{equation}\label{equation:5.27}
    \deg \Poin(\Pet_{\mu,\alpha^{(i)}};q^{1/2})\le\sum_{j=1}^{\ell(\mu)}j\mu_j-\ell(\alpha^{(i)})=\sum_{i=j}^{\ell(\lambda)}i\mu_j-\ell(\alpha^{(i)}),
\end{equation}
where the equality holds if and only if $\alpha^{(i)}\in \AA_\mu$. Therefore, combining Equations (\ref{equation:5.25}), (\ref{equation:5.26}) and $\ref{equation:5.27}$, we        
\begin{align*}
\begin{split}
   \deg \Poin(\Pet_{\lambda,\alpha};q^{1/2}) &=\max_{i \in [\alpha_{1}]}
\max_{\mu\in\LAM(\lambda,|\lambda|-i)}
\left(  \deg f_{\lambda/\mu}(q) + \deg    \Poin(\Pet_{\mu,\alpha^{(i)}};q^{1/2}) \right)\\
&\le \max_{i \in [\alpha_{1}]}
\max_{\mu\in\LAM(\lambda,|\lambda|-i)}\left( \sum _{j=1}^{\ell(\lambda)} j(\lambda_j-\mu_j)+\sum_{j=1}^{\ell(\lambda)}j\mu_j-\ell(\alpha^{(i)})-1\right)\\
&=\max_{i \in [\alpha_{1}]}
\max_{\mu\in\LAM(\lambda,|\lambda|-i)}\left( \sum _{j=1}^{\ell(\lambda)} j\lambda_j-\ell(\alpha^{(i)})-1\right)\\
&=\max_{i \in [\alpha_{1}]}
\left( \sum _{j=1}^{\ell(\lambda)} j\lambda_j-\ell(\alpha^{(i)})-1\right).
\end{split}
\end{align*}
Note that, $\ell(\alpha^{(i)})=\ell(\alpha)$ if $i\not=\alpha_1$, while $\ell(\alpha^{(i)})=\ell(\alpha)-1$ if $i=\alpha_1$. Therefore, we have
\begin{equation}\label{equation:5.29}
     \deg \Poin(\Pet_{\lambda,\alpha};q^{1/2}) \le \sum _{j=1}^{\ell(\lambda)} j\lambda_j-\ell(\alpha).
\end{equation}

On the other hand, note that when $i=\alpha_1$, $\alpha^{(i)}$ is $\overline{\alpha}$ which is the composition obtained by deleting the first entry
from $\alpha$. Since $\alpha \unlhd \lambda$, by Corollary $\ref{cor:exist_pos_Kostka}$, there exists some partition $\nu\in \Gamma(\lambda,|\overline{\alpha}|)$ such that $\overline{\alpha}\unlhd \nu$. Since $|\overline{\alpha}|<|\alpha|=n$, inductively we can assume $$\deg     \P(\nu,\overline{\alpha})=\sum_{j=1}^{\ell(\nu)}j\nu_j-\ell(\overline{\alpha})=\sum_{j=1}^{\ell(\lambda)}j\nu_j-\ell(\overline{\alpha}).$$
Therefore,
\begin{align}\label{equation:5.30}
 \begin{split}
     &\deg \Poin(\Pet_{\lambda,\alpha};q^{1/2})
     \\&=\max_{i \in [\alpha_{1}]}
\max_{\mu\in\LAM(\lambda,|\lambda|-i)}
\left(  \deg f_{\lambda/\mu}(q) + \deg    \Poin(\Pet_{\mu,\alpha^{(i)}};q^{1/2}) \right)\\
&\ge \deg f_{\lambda/\nu}(q) + \deg    \P(\nu,\overline{\alpha})\\
&= \sum_{j=1}^{\ell(\lambda)} j(\lambda_j-\nu_j)+\sum_{j=1}^{\ell(\lambda)}j\nu_j-\ell(\overline{\alpha})-1\\
&= \sum _{j=1}^{\ell(\lambda)} j\lambda_j-\ell(\overline{\alpha})-1\\
&=\sum _{j=1}^{\ell(\lambda)} j\lambda_j-\ell(\alpha).
\end{split}
\end{align}
Combining Formulas (\ref{equation:5.29}) and (\ref{equation:5.30}), we conclude that $\deg \Poin(\Pet_{\lambda,\alpha};q^{1/2})=\sum _{j=1}^{\ell(\lambda)} j\lambda_j-\ell(\alpha)$.

Now we want to show the leading coefficient of $\Poin(\Pet_{\lambda,\alpha};q^{1/2})$ is $\KK_{\lambda\alpha}$.  By Proposition \ref{prop:f_property}, for every $\mu \in \Gamma(\lambda)$,
$
\lc f_{\lambda/\mu}(q)=1.
$
Since $$ \deg \Poin(\Pet_{\lambda,\alpha};q^{1/2})=\deg \left (
   \sum_{i=1}^{\alpha_1} \sum_{\substack{\mu\in\LAM(\lambda,|\lambda|-i)}} f_{\lambda/\mu}(q) \cdot \Poin(\Pet_{\mu,\alpha^{(i)}};q^{1/2}) \right)=\sum _{j=1}^{\ell(\lambda)} j\lambda_j-\ell(\alpha),$$
   while for every $i\in [\alpha_1]$ and $\mu \in \LAM(\lambda,|\lambda|-i)$, the equation
   $$
   \deg \left( f_{\lambda/\mu}(q) \cdot \Poin(\Pet_{\mu,\alpha^{(i)}};q^{1/2}) \right)=\sum _{j=1}^{\ell(\lambda)} j\lambda_j-\ell(\alpha)
   $$
holds if and only if $\alpha^{(i)}=\overline{\alpha}$, $\mu\in \Gamma(\lambda,|\overline{\lambda}|)$, and  $\overline{\alpha}\unlhd \mu$. Therefore, 
\begin{align*}
    \lc \Poin(\Pet_{\lambda,\alpha};q^{1/2}) &= 
\sum _{{\substack{\mu\in \Gamma(\lambda,|\overline{\lambda}|),\\ \overline{\alpha}\unlhd \mu }}}\lc \left(f_{\lambda/\mu}(q) \cdot \Poin(\Pet_{\mu,\overline{\alpha}};q^{1/2})\right)
\\&=\sum _{{\substack{\mu\in \Gamma(\lambda,|\overline{\lambda}|),\\ \overline{\alpha}\unlhd \mu }}}\lc \Poin(\Pet_{\mu,\overline{\alpha}};q^{1/2}).
\end{align*}
Inductively, assume that for every $\mu \in \Gamma(\lambda,|\overline{\lambda}|)$ and $\overline{\alpha}\unlhd \mu$, we have $\lc \P(\mu,\overline{\alpha})=\KK_{\mu\overline{\alpha}}$. Then, by Equation (\ref{eqution:kostka_number_recursion_2}), we have 
$$
\lc \Poin(\Pet_{\lambda,\alpha};q^{1/2}) =\sum _{{\substack{\mu\in \Gamma(\lambda,|\overline{\lambda}|),\\ \overline{\alpha}\unlhd \mu }}}\KK_{\mu\overline{\alpha}} = \KK_{\lambda\alpha}
$$
which is the desired equation.
\end{proof}
We can obtain a more general statement on the dimension and number of maximal dimensional components of $\Pet_{\lambda,\alpha}$. 

\begin{thm}\label{thm:big_dim_components}
 Given partition $\lambda$ and composition $\alpha$ such that $|\lambda|=|\alpha|$,  The dimension of $\Pet_{\lambda,\alpha}$ is given by the formula
\begin{equation}\label{EQ58}
         \dim \Pet_{\lambda,\alpha}= 
         \sum _{i=1}^{\ell (\lambda)} i\lambda_i - \ell (\beta),
    \end{equation}
    where $\beta$ is an element in $\left( \AA_\lambda\cap \RR_\alpha\right)^{\min}$.
    The number of maximal dimensional components of $\Pet_{\lambda,\alpha}$ equals
\begin{equation}\label{EQ59}
       \sum _{\beta\in \left( \AA_\lambda\cap \RR_\alpha\right)^{\min}} \KK_{\lambda \beta}.
    \end{equation} 
\end{thm}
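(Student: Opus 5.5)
We reduce to the admissible case via the admissible decomposition, and then count top-degree Poincaré coefficients.

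\emph{Reduction.} If $\alpha\in\AA_\lambda$, then $(\AA_\lambda\cap\RR_\alpha)^{\min}=\{\alpha\}$, because every proper refinement of $\alpha$ is strictly longer. In that case the statement is exactly Theorem \ref{thm:dimension_number_components}. Otherwise Theorem \ref{thm:admissible_decomposition} gives
$$\Pet_{\lambda,\alpha}=\bigcup_{\beta\in(\AA_\lambda\cap\RR_\alpha)^{\CST}}\Pet_{\lambda,\beta}.$$
Each $\Pet_{\lambda,\beta}$ is admissible, so by Theorem \ref{thm:dimension_number_components} it has dimension $\sum_i i\lambda_i-\ell(\beta)$. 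The dimension of a finite union is the maximum of the dimensions of the pieces. That maximum is attained exactly at the $\beta$ of minimal length, i.e.\ at $\beta\in(\AA_\lambda\cap\RR_\alpha)^{\min}\subseteq(\AA_\lambda\cap\RR_\alpha)^{\CST}$. This proves (\ref{EQ58}).

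\emph{Counting components.} A top-dimensional irreducible component of the union is a top-dimensional component of at least one piece. Such a piece must itself have maximal dimension, so it is $\Pet_{\lambda,\beta}$ with $\beta\in(\AA_\lambda\cap\RR_\alpha)^{\min}$, and by Theorem \ref{thm:dimension_number_components} each such piece contributes $\KK_{\lambda\beta}$ components. It remains to show that no top-dimensional component is shared by two distinct pieces. Take distinct $\beta,\gamma$ in the $\min$ set. By Proposition \ref{prop:basic_of_inclusion_composition}(2),
$$\Pet_{\lambda,\beta}\cap\Pet_{\lambda,\gamma}=\Pet_{\lambda,\beta\circ\gamma}.$$
Since $\beta\ne\gamma$ have the same length, $PS(\beta)\cup PS(\gamma)$ is strictly larger than $PS(\beta)$, so $\ell(\beta\circ\gamma)>\ell(\beta)$. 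I would then bound $\dim\Pet_{\lambda,\beta\circ\gamma}$ by Lemma \ref{lem:dim_inequality}. That lemma gives $\dim\Pet_{\lambda,\delta}\le\sum_i i\lambda_i-\ell(\delta)$ for every composition $\delta$, and its hypothesis is supplied by Theorem \ref{thm:dimension_number_components}. Hence the intersection has dimension strictly less than the common top dimension, so it cannot contain a top-dimensional component. The top-dimensional components are therefore counted without repetition, which gives (\ref{EQ59}).

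\emph{Main obstacle.} The delicate step is the no-double-counting argument. An irreducible component of one piece could a priori be swallowed by a larger irreducible set lying in the union, but this cannot happen among equal-dimensional components. Two equal-dimensional irreducible closed sets that coincide lie in the intersection, and the dimension bound above rules this out. Alternatively, one could argue by inclusion–exclusion on point counts $\#\Pet(\FFq)$ via Proposition \ref{prop:q-counting}. The union $\bigcup\Pet_{\lambda,\beta}$ is then an alternating sum over intersections $\Pet_{\lambda,\circ_{S}\beta}$, and all proper intersections have lower degree. The leading coefficient is therefore $\sum_{\beta\in(\AA_\lambda\cap\RR_\alpha)^{\min}}\KK_{\lambda\beta}$, which reproduces both the dimension and the count at once.
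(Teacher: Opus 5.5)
Your proposal is correct and follows the paper's proof essentially step for step: reduce to Theorem \ref{thm:dimension_number_components} via the admissible decomposition, and read off the dimension from the minimal-length pieces. Double counting is then ruled out using $\Pet_{\lambda,\beta}\cap\Pet_{\lambda,\gamma}=\Pet_{\lambda,\beta\circ\gamma}$ with $\ell(\beta\circ\gamma)>\ell(\beta)$. The one difference is minor. The paper notes that $\beta\circ\gamma$ refines the admissible $\beta$ and so is itself admissible, which lets it apply Theorem \ref{thm:dimension_number_components} directly rather than Lemma \ref{lem:dim_inequality}; your route is equally valid and spells out the component-matching logic more carefully than the paper does.
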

Since all elements in $\left( \AA_\lambda\cap \RR_\alpha\right)^{\min}$ have the same length, the Formula (\ref{EQ58}) is well-defined.
\begin{proof}
    For every partition $\lambda$ and composition $\alpha$ such that $|\lambda|=|\alpha|$, if $\alpha\unlhd \lambda$, then $\left( \AA_\lambda\cap \RR_\alpha\right)^{\min}=\{\alpha\}$. The statement holds by Theorem \ref{thm:dimension_number_components}.

In the rest of the proof, suppose $\alpha$ is not dominated by  $\lambda$. Then, by Theorem \ref{thm:admissible_decomposition}, we have the admissible decomposition
    $$
\Pet_{\lambda,\alpha} 
      =\bigcup _{ \left( \AA_\lambda\cap \RR_\alpha\right)^{\CST}}\Pet_{\lambda,\beta}
    $$
    such that none of these varieties is contained within the others. Combining this with Theorem \ref{thm:dimension_number_components}, we have
    $$
    \dim \Pet_{\lambda,\alpha}=\max_{\left( \AA_\lambda\cap \RR_\alpha\right)^{\CST}} \dim \Pet_{\lambda,\beta}
    =
    \max_{\left( \AA_\lambda\cap \RR_\alpha\right)^{\CST}}
    \left(\sum _{i=1}^{\ell (\lambda)} i\lambda_i - \ell (\beta)\right)
    .
    $$
  Clearly, the right-hand side of the equation realizes the maximal value if and only if $\beta$ is of minimal length in $\left( \AA_\lambda\cap \RR_\alpha\right)^{\CST}$, which implies $\beta\in \left( \AA_\lambda\cap \RR_\alpha\right)^{\MIN}$.

To prove the statement regarding the number of maximal dimensional components, note that by the previous part of the proof, all maximal dimensional components are inside the union $$\bigcup_{\left( \AA_\lambda\cap \RR_\alpha\right)^{\MIN}}\Pet_{\lambda,\beta}.$$ By Theorem \ref{thm:dimension_number_components} again, for every element $\beta\in\left ( \AA_\lambda\cap \RR_\alpha\right)^{\min}$, $\Pet_{\lambda,\beta}$ has $\KK_{\lambda \beta}$ different maximal dimensional components, whose dimension also equals $\dim \Pet_{\lambda,\alpha}$. We want to show that if $\beta$ and $\gamma$ are different elements in $\left ( \AA_\lambda\cap \RR_\alpha\right)^{\min}$, then none of the maximal dimensional components of $\Pet_{\lambda,\alpha}$ is contained in their intersection. By Proposition \ref{prop:basic_of_inclusion_composition}, $$\Pet_{\lambda,\beta}\cap\Pet_{\lambda,\gamma}=\Pet_{\lambda,\beta \circ \gamma}.$$
Note that $\beta \circ \gamma$ is in $\left ( \AA_\lambda\cap \RR_\alpha\right)$, and $\ell(\beta \circ \gamma)>\ell(\beta)$ since both $\beta$ and $\gamma$ are of minimal length, and $\beta \circ \gamma$ refines both of them. By Theorem \ref{thm:dimension_number_components} again, we have
    $$
    \dim \Pet_{\lambda,\beta} \cap \Pet_{\lambda,\gamma}= \dim \Pet_{\lambda,\beta \circ \gamma} <
    \dim \Pet _{\lambda,\beta} = \dim \Pet_{\lambda,\alpha}.
    $$

    Therefore, the number of maximal dimensional components of $\Pet_{\lambda,\alpha}$ is the same as the sum of the numbers of maximal dimensional components of every $\Pet_{\lambda,\beta}$, which is $\KK_{\lambda\beta},$ where  $\beta$ runs over all elements in $\left ( \AA_\lambda\cap \RR_\alpha\right)^{\min}$.
\end{proof}
\begin{ex}
    Given $\lambda=(3,2)$ and $\alpha=(4,1)$, the set $ \left ( \AA_{(3,2)}\cap \RR_{4,1}\right)^{\min}$ contains $(3,1,1),(2,2,1)$ and $(1,3,1)$. Then, by Theorem \ref{thm:big_dim_components}, $\Pet_{(3,2),(4,1)}$ is of dimension $3+2\times 2- 3 =4$. And since 
    \begin{align*}
        &\KK_{(3,2)(3,1,1)}+\KK_{(3,2)(2,2,1)}+\KK_{(3,2)(1,3,1)} \\&= \#\Bigl\{
 \begin{ytableau}
1 &1 &1 \\
2 & 3
\end{ytableau}
\Bigr\} +
 \# \Bigl\{\begin{ytableau}
1 &1 &2 \\
2 & 3
\end{ytableau},
 \begin{ytableau}
1 &1 &3 \\
2 & 2
\end{ytableau}
\Bigr\}
+\# 
\Bigl\{ \begin{ytableau}
1 &2 &2 \\
2 & 3
\end{ytableau}
\Bigr\}\\&=1+2+1=4,
    \end{align*}
the variety $\Pet_{(3,2),(4,1)}$ has four different maximal dimensional components.
\end{ex}

Recall that when $\ell$ is an integer, we use $\SST(\lambda,\ell)$ to denote the set of all semistandard Young tableaux whose content $\alpha$ satisfies $\ell(\alpha)=\ell$. Our Proposition \ref{prop:generalized_Pet_dim} states that for every partition $\lambda$,  the generalized Peterson variety $\Pet_\lambda$ has dimension
     $$
     \dim\left(\Pet_{\lambda} \right) =\sum _{i=1}^{\ell (\lambda)} i\lambda_i - \ell (\lambda),
     $$
     and its components of this maximum dimension are in bijection with the set $\SST(\lambda,\ell(\lambda))$. We show that Proposition \ref{prop:generalized_Pet_dim} follows from Theorem \ref{thm:big_dim_components}.
\begin{proof}[Proof of Proposition \ref{prop:generalized_Pet_dim}]
    For every partition $\lambda$, notice that $\Pet_{\lambda}=\Pet_{\lambda,(n)}$ and
    $$
    \left ( \AA_\lambda\cap \RR_{(n)}\right)^{\min}=\left ( \AA_\lambda\right)^{\min}=\{\beta\in \AA_\lambda:\ell(\beta)=\ell(\lambda)\}.
    $$
Therefore, by Theorem \ref{thm:big_dim_components}, we immediately have
    $$
    \dim \Pet_{\lambda}=\dim \Pet_{\lambda,(n)}=\sum _{i=1}^{\ell (\lambda)} i\lambda_i - \ell (\lambda).
    $$
    And the number of maximal dimensional components equals $$
   \sum _{\beta\in\left ( \AA_\lambda\cap \RR_{(n)}\right)^{\min}} \KK_{\lambda \beta}=\sum _{\substack{\beta\in \AA_\lambda:\\ \ell(\beta) =\ell (\lambda)}
   } \KK_{\lambda \beta}.
    $$
    Now, the desired formula follows from Equation (\ref{equation:sumofKnumber}).
\end{proof}

\begin{ex}
When $\lambda=(3,2,1)$, Proposition \ref{prop:generalized_Pet_dim} implies $\dim \Pet_{(3,2,1)}= 3+2\times 2 +1\times 3 -3 =7$, and the number of maximal dimensional components equals $\# \SST((3,2,1),3)$ which is $8$ as showed in Subsection \ref{ssection:ssYT}. Notice this agrees with the computational result in Example \ref{ex:lambda321}.
\end{ex}

\section*{Appendix: Summary of notation}
\begin{center}
\renewcommand{\arraystretch}{1.2}
\begin{longtable}{|c|p{0.65\textwidth}|}
\hline
\textbf{Notation} & \textbf{Meaning} \\

\endfirsthead
\hline
$\alpha,\beta,\gamma$
    & Integer compositions. \\
    \hline
    $\ell(\alpha)$ &The length of $\alpha$.\\
\hline
$|\alpha|$ &The size of $\alpha$.\\
\hline
$PS(\alpha)$
    & The partial sum set of composition $\alpha$. \\
\hline

 $\overline{\alpha}$ & The composition obtained by deleting the first entry from $\alpha$.\\
\hline
$\alpha^{(i)}$ &The integer composition obtained by replacing the first entry $\alpha_1$ of $\alpha$ with $\alpha_1-i$.
\\
\hline
$\beta \preceq \alpha$&
    The composition $\beta$ refines the composition $\alpha$. \\
\hline
$\RR_\alpha$
    & The set of all compositions that refine $\alpha$. \\
\hline
$\lambda,\mu,\nu$
    & Integer partitions. \\
\hline
$\lambda^*$ &The transpose partition of the partition $\lambda$.\\
\hline

$\alpha \unlhd \lambda$
    & The composition $\alpha$ is dominated by partition $\lambda$. \\
\hline

$\AA_\lambda$
    & The set of all compositions dominated by $\lambda$. \\
\hline

$A^{\CST}$
    & The subset of all the coarsest elements in set $A$ where $A$ is a set of compositions of the same size.
\\
\hline 

$A^{\MIN}$
    &The subset of all the minimal length elements in set $A$ where $A$ is a set of compositions of the same size. \\
\hline

$\lambda/\mu$
    & The skew Young tableau defined by $\lambda$ and $\mu$. \\
\hline

$I_{\lambda/\mu}$
    & Given that $\lambda/\mu$ is a horizontal strip, $I_{\lambda/\mu}$ is the set of all the column indices of the boxes in the skew Young diagram $\lambda/\mu$. \\
\hline
$\Gamma(\lambda)$ &The set of all partitions $\mu$ such that $\lambda/\mu$ is a horizontal strip.\\
\hline
$\Gamma(\lambda,m)$ &The set of all partitions $\mu$ such that $\lambda/\mu$ is a horizontal strip and $|\mu|=m$.\\
\hline
$\SST(\lambda,\alpha)$ &
The set of all semistandard Young tableaux of shape $\lambda$ and content $\alpha$.\\
\hline
$\KK_{\lambda\alpha}$ & The Kostka number, which is the cardinality of $\SST(\lambda,\alpha)$.
\\
\hline
 $\SST(\lambda,\ell)$ &Given an integer $\ell$, $\SST(\lambda,\ell)$ is the set of all semistandard Young tableaux whose content $\alpha$ satisfies $\ell(\alpha)=\ell$.\\
 \hline
$\Hess(X, h)$ &The Hessenberg variety defined by matrix $X$ and Hessenberg function $h$.
\\
\hline

 $\BB_\lambda$ & The Springer fiber.
\\
\hline
$\Pet_\lambda$ & The generalized Peterson variety.
\\
\hline
$\Pet_{\lambda,\alpha}$ & The generalized parabolic Peterson variety.
\\
\hline
$\Poin(Z;t)$ &The Poincaré polynomial of a variety $Z$. 
\\
\hline

$\fN_{\lambda/\mu}(q)$ &The coefficient polynomial as defined in Subsection \ref{ssection:51}.
\\
\hline
$\height_\lambda(v)$ &The height of a vector as defined in Subsection \ref{ssection:31}.
\\
\hline
$\alpha_\lambda (F_\bullet)$ & The composition associated to a flag $F_\bullet$ as defined in Subsection \ref{ssection:41}.
\\
\hline
\end{longtable}
\end{center}

\section*{Acknowledgements}

This work was supported in part by NSF 2237057. I am grateful to my advisor, Martha Precup, for introducing me to these topics. I also thank John Shareshian, Hsin-Chieh Liao, Alexander Woo, Lucas Gagnon, and Mark Colarusso for the helpful conversations.

\printbibliography

\end{document}